\newtheorem{Theorem}{Theorem}[section]
\newtheorem{theorem}[Theorem]{Theorem}
\newtheorem{Proposition}[Theorem]{Proposition}
\newtheorem{proposition}[Theorem]{Proposition}
\newtheorem{Corollary}[Theorem]{Corollary}
\newtheorem{corollary}[Theorem]{Corollary}
\newtheorem{Lemma}[Theorem]{Lemma}
\newtheorem{lemma}[Theorem]{Lemma}
\newtheorem{Fact}[Theorem]{Fact}
\newtheorem{fact}[Theorem]{Fact}
\newtheorem{remark/def}[Theorem]{Remark/Definition}
\newtheorem{Claim}[Theorem]{Claim}
\newtheorem{claim}[Theorem]{Claim}
\theoremstyle{definition}
\newtheorem{example}[Theorem]{Example}
\newtheorem{Remark}[Theorem]{Remark}
\newtheorem{remark}[Theorem]{Remark}
\newtheorem{Definition}[Theorem]{Definition}
\newtheorem{definition}[Theorem]{Definition}
\newtheorem{notation}[Theorem]{Notation}
\newtheorem{note}[Theorem]{Note}
\newtheorem{Question}[Theorem]{Question}
\newtheorem{question}[Theorem]{Question}
\newtheorem{def/rem}[Theorem]{Definition/Remark}
\newtheorem{def/fact}[Theorem]{Definition/Fact}
\newtheorem{not/rem}[Theorem]{Notation/Remark}
\newsavebox{\indbin}
\savebox{\indbin}{\begin{picture}(0,0)
\newlength{\gnu}
\settowidth{\gnu}{$\smile$} \setlength{\unitlength}{.5\gnu}
\put(-1,-.65){$\smile$} \put(-.25,.1){$|$}
\end{picture}}
\def \indo {\mathop{\smile \hskip -0.9em ^| \ }}
\newcommand{\ind}[3]
{\mbox{$\begin{array}{ccc}
\mbox{$#1$} & \usebox{\indbin} & \mbox{$#2$} \\
        & \mbox{$#3$} &
\end{array}$}}
\newcommand{\be}{\begin{enumerate}}
\newcommand{\bi}{\begin{itemize}}
\newcommand{\bd}{\begin{defn}}
\newcommand{\bt}{\begin{theorem}}
\newcommand{\bl}{\begin{lemma}}
\newcommand{\ee}{\end{enumerate}}
\newcommand{\ei}{\end{itemize}}
\newcommand{\ed}{\end{defn}}
\newcommand{\et}{\end{theorem}}
\newcommand{\el}{\end{lemma}}
\newcommand{\la}{\langle}
\newcommand{\ra}{\rangle}
\newcommand{\ov}{\overline}
\newcommand{\CP}{{\mathcal P}}
\newcommand{\CR}{{\mathcal R}}
\newcommand{\CA}{{\mathcal A}}
\newcommand{\CC}{{\mathcal C}}
\newcommand{\CE}{{\mathcal E}}
\newcommand{\CM}{{\mathcal M}}
\newcommand{\CN}{{\mathcal N}}
\newcommand{\BQ}{{\mathbb Q}}
\newcommand{\BR}{{\mathbb R}}
\newcommand{\dom}{\mbox{dom}}
\newcommand{\id}{\operatorname{id}}
\newcommand{\Aut}{\operatorname{Aut}}
\newcommand{\aut}{\operatorname{Aut}}
\newcommand{\Autf}{\operatorname{Autf}}
\newcommand{\autf}{\operatorname{Autf}}
\newcommand{\LL}{\operatorname{L}}
\newcommand{\gal}{\operatorname{Gal}}
\newcommand{\gall}{\gal_{\LL}}
\newcommand{\KP}{\operatorname{KP}}
\newcommand{\galkp}{\gal_{\KP}}
\newcommand\floor[1]{\lfloor#1\rfloor}
\newcommand{\Th}{\operatorname{Th}}
\def\fix{\operatorname{fix}}
\def\res{\operatorname{res}}
\def\eq{\operatorname{eq}}
\def\cl{\operatorname{cl}}
\def\dcl{\operatorname{dcl}}
\def\dim{\operatorname{dim}}
\def\dom{\operatorname{dom}}
\def\acl{\operatorname{acl}}
\def\ker{\operatorname{Ker}}
\def\Im{\operatorname{Im}}
\def\tp{\operatorname{tp}}
\def\stp{\operatorname{stp}}
\def\ltp{\operatorname{Ltp}}
\def\Lstp{\operatorname{Lstp}}
\def\a{\bar{a}}
\def\b{\bar{b}}
\def\p{\bar{p}}
\def\raw{\rightarrow}
\def\mor{\operatorname{Mor}}
\def\supp{\operatorname{supp}}
\def\Bd{\partial}
\title[The relativized Lascar groups ]{The relativized Lascar groups, type-amalgamation, and algebraicity}
\author[J.~Dobrowolski, B.~Kim, A.~Kolesnikov, and J.~Lee]{Jan Dobrowolski, Byunghan Kim, Alexei Kolesnikov, and Junguk Lee}
\address{School of Mathematics\\ University of Leeds\\
Leeds, United Kingdom}
\address{Department of Mathematics\\ Yonsei University\\
Seoul, Korea}
\address{Department of Mathematics\\ Towson University\\  MD, USA}
\address{Department of Mathematics\\ Yonsei University\\
Seoul, Korea}
\email{J.Dobrowolski@leeds.ac.uk}
\email{bkim@yonsei.ac.kr}
\email{AKolesnikov@towson.edu}
\email{ljw@yonsei.ac.kr}
\thanks{The first author was supported by European Union's Horizon 2020 research
and innovation programme under the Marie Sklodowska-Curie grant agreement No 705410. The second and fourth authors were supported by Samsung Science Technology Foundation under Project Number
SSTF-BA1301-03.}
\begin{document}

\begin{abstract}  
In this paper we study the relativized Lascar Galois group of a strong type.
The group is a quasi-compact connected topological group, and if in addition the underlying theory $T$ is $G$-compact, then the group is 
compact. We apply compact group theory to obtain model theoretic results in this note. 

For example, we use the divisibility of the Lascar group of a strong type to show that, in a simple theory, such types have a certain model theoretic property that we call divisible amalgamation. 

The main result of this paper is that if $c$ is a finite tuple algebraic over a tuple $a$, the Lascar group of $\stp(ac)$ is abelian, and the underlying theory is $G$-compact, then the   Lascar groups of $\stp(ac)$ and of $\stp(a)$ are isomorphic. 
To show this, we prove a purely compact group-theoretic result that any compact connected abelian 
group is isomorphic to its  quotient by every finite subgroup.  

Several (counter)examples arising in connection with the theoretical development of this note are presented as well. For example, we show that, in the main result above, neither the assumption that the Lascar group of $\stp(ac)$ is abelian, nor the assumption of $G$-compactness can be removed. 
\end{abstract}

\maketitle



Given a complete theory $T$, the notion of the Lascar (Galois) group $\gall(T)$ was introduced by D.~Lascar (see \cite{CLPZ}). The Lascar group only depends on the theory and it is a quasi-compact topological group with respect 
to a quotient  topology of  a certain  Stone  type space  over a model  (\cite{CLPZ} or  \cite{Z}).  
More recently, the notions of the {\em relativized} Lascar groups were introduced in \cite{DKL}
(and studied also in \cite{KNS} in the context of topological dynamics).  Namely, given a type-definable set $X$ in a large saturated model of the theory $T$,  we consider the group of automorphisms restricted to the set $X$ quotiented by the group of restricted automorphisms fixing the Lascar types of the sequences from $X$ of length $\lambda$. The relativized Lascar groups also only depend on $T$, on the type that defines $X$, and on the cardinal $\lambda$. These groups are endowed with the quasi-compact quotient topologies induced by the canonical 
surjective maps from $\gall(T)$ to the relativized Lascar groups. 

 
This paper continues the study started in \cite{DKL}, where a connection between the relativized Lascar
groups of a strong type and the first homology group of the strong type  was established. If $T$ is $G$-compact (for example when $T$ is simple) then the relativized  Lascar group of a strong type is compact and connected, and we use compact group theory to obtain results presented here. 

A common theme for the results in this paper is the connection between group theoretic properties of the Lascar group of a strong type and the model theoretic properties of the type.
In Section 1, we use the fact that any compact connected group is divisible to show that any strong type $p$ in a simple theory has a property we call divisible amalgamation. The property would follow from the Independence theorem if $p$ was a Lascar type, but here $p$ is only assumed to be a strong type. 
Moreover, we give amalgamation criteria for when the Lascar group of the strong type of a model is abelian, and for when the strong type of a model is a Lascar type. 

In Section 2, we study morphisms between the relativized Lascar groups. The main goal is to understand the connection between the Lascar group of $\stp(a)$, for some tuple $a$, and the Lascar group of $\stp(\acl(a))$ or $\stp(ac)$, for a finite tuple $c\in \acl(a)$. We  prove that  if $T$ is $G$-compact, then for a finite tuple  $c$ algebraic over $a$, the restriction map from a Lascar group of $\stp(ac)$ to that of $\stp(a)$ is a covering map. In addition, if the Lascar group of $\stp(ac)$ is abelian  then this group is isomorphic to
that of $\stp(a)$ as topological groups. In order to achieve this,  we separately prove a purely compact group theoretical result that any compact connected abelian group is isomorphic to its quotient by a finite subgroup. We also give an example showing that the abelianness of the relativized Lascar group is essential in the isomorphism result. 

In Section 3, we mainly present 3 counterexamples: a non $G$-compact theory where $\stp(a)$ is a Lascar type but $\stp(\acl(a))$ is not a Lascar type; an example showing that in the mentioned isomorphism  result in Section 2, the tuple $c$ being finite is essential; and an example answering a question raised in \cite{KL}, namely 
a Lie group structure example where an RN-pattern minimal 2-chain is not equivalent to a Lascar pattern 2-chain having the same boundary.    
 
\medskip 
In the remaining part of this section we recall  the definitions  and terminology of basic notions, which, unless said otherwise, we  use
throughout this note. We work in a large saturated model $\CM(=\CM^{\eq})$ of a complete theory
$T$, and we use the standard notation. So  $A, B,\dots$ and $M,N,\dots$ are small subsets and elementary submodels of $\CM$, respectively. Lower-case letters $a,b,\dots$ will denote 
tuples of elements from $\CM$, possibly infinite. We will explicitly specify when a tuple is assumed to be finite. 

To simplify the notation, we state the results for types over the empty set (rather than over $\acl(\emptyset)$).
This does not reduce the generality because, after naming a parameter set, we may assume that $\dcl(\emptyset)=\acl(\emptyset)$. We  fix a complete {\em strong} type $p(x)\in S(\emptyset)$ with possibly infinite arity of $x$. 
For tuples $a,b$, we write $a\equiv_Ab$ ($a\equiv^s_Ab$,  resp.) to mean that they have the same (strong,  resp.) type over $A$. Note that $a\equiv^s_Ab$ if and only if $a\equiv_{\acl(A)}b$.

Let us recall the definitions of the Lascar groups and types. These are well-known notions in model theory. In particular, the Lascar group depends only on $T$ and is
a {\em quasi-compact}  group under the topology introduced in  \cite{CLPZ} or \cite{Z}. 
Moreover, due to our assumption 
$\dcl(\emptyset)=\acl(\emptyset)$,  the group  is {\em connected} as well. Recall that a topological space is {\em compact} if it is quasi-compact and Hausdorff.  

\begin{Definition}
\bi
\item
$\autf(\CM)$ is the normal  subgroup of $\aut(\CM)$ generated by 
$$\{f \in \aut(\CM) \mid f \mbox{ pointwise fixes some model } M \prec \CM\}.$$ 

\item For tuples $a,b \in \CM$, we say they have {\em the same Lascar  type}, written $a\equiv^Lb$ or $\ltp(a)=\ltp(b)$,
 if there is $f\in\autf(\CM) $ such that 
$f(a)=b$. We say $a,b$ have the same KP(Kim--Pillay)-type (write $a\equiv^{KP} b$) if they are in the same
class  of any bounded $\emptyset$-type-definable equivalence relation.  

\item  The group $\gall(T):=\aut(\CM)/\autf(\CM)$ is called the {\em Lascar   (Galois)  group} of $T$.

\item We say $T$ is {\em $G$-compact} if $\gall(T)$ is compact, equivalently $\{\mbox{id}\}$ is closed in $\gall(T)$. 
\ei
\end{Definition}

Note that the above  is the definition of ``$G$-compactness over $\emptyset$'', but for convenience throughout this paper we omit ``over $\emptyset$.''

\begin{Fact}
For tuples $a,b$, we have $a\equiv^L b$ if and only if the Lascar distance between $a$, $b$ is finite, i.e., there are finitely many indiscernible sequences 
$I_1,\ldots, I_n$, and tuples $a=a_0, a_1,\ldots, a_n=b$  such that  each
of $a_{i-1}I_i$ and $a_iI_i$ is an indiscernible sequence for $i=1,\ldots,n$.
\end{Fact}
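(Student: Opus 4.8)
The plan is to handle the two implications by separate arguments, after a common reduction. On the one hand, by the definition of $\autf(\CM)$, any $f\in\autf(\CM)$ is a finite composite of automorphisms each fixing some model $M_i\prec\CM$ pointwise; so if $f(a)=b$ and we set $a_0:=a$ and $a_i:=g_i(a_{i-1})$ along such a factorization $f=g_k\circ\cdots\circ g_1$, then $a=a_0$, $b=a_k$, and $a_{i-1}\equiv_{M_i}a_i$. On the other hand, a chain of indiscernible sequences witnessing that the Lascar distance of $a,b$ is $\le n$ splits into $n$ steps each of ``distance $\le 1$''. Hence it suffices to prove: (i) if $M\prec\CM$ and $x\equiv_M y$, then the Lascar distance of $x,y$ is finite; and (ii) if $I$ is an infinite indiscernible sequence and both $xI$ and $yI$ are indiscernible, then $x\equiv^L y$.

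For (i) the key is to produce a single indiscernible sequence placing $x$ and $y$ into indiscernible position over $M$ at once. Let $q:=\tp(x/M)=\tp(y/M)$ and fix a global type $q^*\supseteq q$ that is finitely satisfiable in $M$ (available because $M$ is a model). Choose $c_i\models q^*\restriction\bigl(M\cup\{x,y\}\cup\{c_j:j<i\}\bigr)$ for $i<\omega$. Then $(x,c_0,c_1,\dots)$, $(y,c_0,c_1,\dots)$, and $(c_0,c_1,\dots)$ are all coheir sequences over $M$, hence $M$-indiscernible; deleting $c_0$ from the first two then exhibits $(c_1,c_2,\dots)$ as an infinite indiscernible sequence witnessing that the Lascar distance from $x$ to $c_0$ and from $c_0$ to $y$ is each at most $1$, so $d(x,y)\le 2$.

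For (ii) I would not argue about $\equiv^L$ directly, but use that $\equiv^L$ is an $\aut(\CM)$-invariant equivalence relation with boundedly many classes — bounded because $x\equiv_M y$ implies $x\equiv^L y$ for a fixed small model $M$ and $|S(M)|$ is bounded — and prove the following for \emph{any} bounded $\aut(\CM)$-invariant equivalence relation $E$: if $I$ is infinite and $xI,yI$ are indiscernible, then $E(x,y)$. Given $E$, extend $I$ by compactness to $I'=(c_i)_{i<\mu}$ with $\mu$ larger than the number of $E$-classes, keeping $I'$, $xI'$, $yI'$ all indiscernible; this is possible since every finite fragment of the required type is realized inside the original $x,y,I$-configuration. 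Indiscernibility gives $\tp(x,c_0)=\tp(c_0,c_1)=\tp(y,c_0)$, so by invariance of $E$ it suffices to see $E(c_0,c_1)$; and by the choice of $\mu$ some $E(c_i,c_j)$ holds with $i<j$, while $(c_i,c_j)\equiv(c_0,c_1)$, so $E(c_0,c_1)$ follows. Taking $E$ to be $\equiv^L$ gives (ii). (If $I$ is a constant sequence then $x=y$ and there is nothing to prove.)

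The steps I would treat as standard and not verify in detail are that a sequence chosen to realize, term by term, a type finitely satisfiable in $M$ is $M$-indiscernible, and the compactness stretching of an indiscernible configuration. The main obstacle — the one place where a careful construction is genuinely needed — is (i): one must use the \emph{same} sequence as a bridge for both $x$ and $y$, and it is precisely the finitely-satisfiable (coheir) extension over $M\cup\{x,y\}$ that achieves this. The tempting shortcut of building an $M$-indiscernible sequence out of $x$ alone and then applying an automorphism over $M$ sending $x$ to $y$ fails, because the image sequence need not be at finite Lascar distance from the original. In (ii) the essential point is just that no property of $\equiv^L$ beyond ``bounded invariant equivalence relation'' is used, after which pigeonhole plus invariance finishes.
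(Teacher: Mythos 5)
Your proof is correct. Note that the paper does not prove this Fact at all --- it is stated as a known classical result (from Casanovas--Lascar--Pillay--Ziegler and Ziegler's notes on the Lascar group) --- and your argument is essentially the standard one: the reduction of $a\equiv^L b$ to finitely many steps $a_{i-1}\equiv_{M_i}a_i$, a coheir (Morley) sequence in a global type finitely satisfiable in $M$ to get $d(x,y)\le 2$ from $x\equiv_M y$, and the pigeonhole-plus-invariance argument showing that any bounded invariant equivalence relation identifies two starting points of a common indiscernible sequence. Both the key constructions (using the same coheir sequence as a bridge for $x$ and $y$, and stretching the configuration past the number of classes) are exactly where the content lies, and you have them right.
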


Now let us recall, mainly from  \cite{DKL}, the definitions of various relativized Lascar groups of $p$ and related facts.

\begin{Definition}
\bi
\item
$\aut(p):=\{ f \restriction p(\CM):\  f\in \aut(\CM)\}$; 
\item for a cardinal $\lambda> 0$, $\autf^{\lambda}(p)=\autf_{\fix}^{\lambda}(p):=$
	$$\{\sigma\in \aut(p)\mid  
	\mbox{ for any } \a=(a_i)_{i< \lambda} \mbox{ with }  a_i\models p,
	\  \a\equiv^L \sigma(\a)
	 \};$$ 
\item
	$\autf_{\fix}(p):=$
	$$\{\sigma\in \aut(p) \mid  \a\equiv^L \sigma(\a)\mbox{ where }	\a \mbox{ is some enumeration of }
	p(\CM) \};$$
	and $\autf_{\res}(p):=\{ f \restriction p(\CM):\  f\in \autf(\CM)\}$.
	
\ei
\noindent Notice that $\autf^{\lambda}(p)$, $\autf_{\fix}(p)$, and  $\autf_{\res}(p)$ are normal subgroups of $\aut(p)$. 

\bi	 	
	\item $\gall^{\lambda}(p)=\gall^{\fix, \lambda}(p):=\aut(p)/\autf^{\lambda}(p)$; \footnote{Similarly, $\autf^{\lambda}_{\KP}(p)$ is defined as the group of automorphisms in $\aut(p)$ fixing the KP-type of any $\lambda$-many realizations of $p$,
and $\galkp^{\lambda}(p):=\aut(p)/\autf^{\lambda}_{\KP}(p)$. Then in this paper, one may work with $\galkp^{\lambda}(p)$ instead of $\gall^{\lambda}(p)$, and remove the assumption of $T$ being $G$-compact where that is assumed.} 
	\item $\gall^{\fix}(p):=\aut(p)/\autf_{\fix}(p)$,
and  $\gall^{\res}(p):=\aut(p)/\autf_{\res}(p)$.
	\ei
\end{Definition}

 We will give an example (in Example \ref{g1neg2}) where $\gall^1(p)$ and  $\gall^2(p)$ are distinct.
In \cite[Remark 3.4]{DKL}, a canonical topology on each of the above groups was defined. With these topologies,
they become quotients of the  topological group $\gall(T)$.

\begin{Fact} \cite{DKL}  \label{rel.lascargp}
$\autf^{\omega}(p)=\autf_{\fix}(p)$, and, for each $\lambda(\leq \omega)$, 
$\gall^{\lambda}(p)$ does not depend on the choice of a monster model,
and is a quasi-compact connected 
topological group. Hence, if $T$ is $G$-compact, $\gall^{\lambda}(p)$ is a
compact  connected group.

If $p(x)$ is a type of a model,
then $\autf^1(p)=\autf_{\fix}(p)=\autf_{\res}(p)$, $\gall^1(p)=\gall^{\fix}(p)=\gall^{\res}(p)\cong\gall(T)$. The abelianization of $\gall^1(p)$ (i.e., the group $\gall^1(p)/(\gall^1(p))'$) is isomorphic to the first homology group $H_1(p)$.
\end{Fact}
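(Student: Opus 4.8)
The plan is to verify the four assertions in turn, using only the properties of $\gall(T)$ recalled above together with elementary manipulations inside $\aut(\CM)$. \emph{Step 1: $\autf^{\omega}(p)=\autf_{\fix}(p)$.} The inclusion $\autf_{\fix}(p)\subseteq\autf^{\omega}(p)$ is immediate: if $\sigma$ fixes the Lascar type of a fixed enumeration $\a$ of $p(\CM)$, witnessed by $f\in\autf(\CM)$ with $f(\a)=\sigma(\a)$, then $f$ agrees with $\sigma$ on $p(\CM)$, so the same $f$ witnesses $\b\equiv^{L}\sigma(\b)$ for any $\omega$-tuple $\b$ of realizations of $p$. For the converse I would use two observations. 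First, if $\sigma\in\autf^{\omega}(p)$ then the Lascar distances $d_{L}(\a',\sigma(\a'))$ over \emph{finite} tuples $\a'$ of realizations of $p$ are bounded by a single $n<\omega$; otherwise, concatenating finite tuples realizing unbounded distances into one $\omega$-tuple $\b$ of realizations would give $d_{L}(\b,\sigma(\b))=\infty$, contradicting $\sigma\in\autf^{\omega}(p)$. Second (locality of Lascar distance), for tuples $\c,\d$ of the same arbitrary length indexed by $I$, if $d_{L}(\c\restriction s,\d\restriction s)\le n$ for every finite $s\subseteq I$, then $d_{L}(\c,\d)\le n$; this is a routine compactness argument: write the type, in variables coding a chain of $n$ indiscernible sequences of $I$-tuples together with the intermediate tuples, that asserts this configuration witnesses $d_{L}(\c,\d)\le n$, and note that each of its finite fragments involves only finitely many coordinates of $I$ and is satisfied by the witness for the corresponding finite $s$. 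Combining the two, any enumeration $\a$ of $p(\CM)$ satisfies $d_{L}(\a,\sigma(\a))\le n<\infty$, so $\a\equiv^{L}\sigma(\a)$ and $\sigma\in\autf_{\fix}(p)$.

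\emph{Step 2: topology and model-independence.} The canonical restriction $r\colon\aut(\CM)\to\aut(p)$ is onto and sends $\autf(\CM)$ into $\autf^{\lambda}(p)$, since global Lascar-strong automorphisms preserve all Lascar types; hence it induces a surjection $\bar r\colon\gall(T)\to\gall^{\lambda}(p)$, which by construction of the topology is a topological quotient map. As $\gall(T)$ is quasi-compact and connected (the latter by $\dcl(\emptyset)=\acl(\emptyset)$), its continuous image $\gall^{\lambda}(p)$ is too. Independence of the monster model is the usual absoluteness argument: for monster models $\CM_{1}\prec\CM_{2}$, restriction along the inclusion maps $\aut(\CM_{2})$ onto $\aut(\CM_{1})$ and carries each distinguished normal subgroup onto its counterpart (being a realization of $p$, and having a prescribed Lascar type, are absolute), inducing the required topological isomorphism of the quotients. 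If $T$ is $G$-compact then $\gall(T)$ is compact, so it suffices to check that $N_{\lambda}:=\ker\bar r$ is closed, for then $\gall^{\lambda}(p)\cong\gall(T)/N_{\lambda}$ is compact Hausdorff; and $N_{\lambda}$ is the set of classes of $f\in\aut(\CM)$ with $\a\equiv^{L}f(\a)$ for all $\lambda$-tuples $\a$ of realizations of $p$, which is a closed condition because $G$-compactness makes $\equiv^{L}$ agree with the $\emptyset$-type-definable relation $\equiv^{KP}$ on tuples of each fixed length.

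\emph{Step 3: the type of a model, and $H_{1}$.} Suppose realizations of $p$ enumerate elementary submodels of $\CM$. One always has $\autf_{\res}(p)\subseteq\autf_{\fix}(p)$ (global Lascar-strong automorphisms preserve all Lascar types), and, by Step 1, $\autf_{\fix}(p)=\autf^{\omega}(p)\subseteq\autf^{1}(p)$ (the last inclusion because a $1$-tuple of realizations sits inside an $\omega$-tuple of realizations); so it suffices to show $\autf^{1}(p)\subseteq\autf_{\res}(p)$. Given $\sigma\in\autf^{1}(p)$, lift it to $\hat\sigma\in\aut(\CM)$ and fix one $a_{0}\models p$ enumerating a model $M_{0}\prec\CM$; since $a_{0}\equiv^{L}\hat\sigma(a_{0})$, choose $g\in\autf(\CM)$ with $g(a_{0})=\hat\sigma(a_{0})$. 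Then $g^{-1}\hat\sigma$ fixes $a_{0}$, hence fixes $M_{0}$ pointwise, hence lies in $\autf(\CM)$, so $\hat\sigma=g\cdot(g^{-1}\hat\sigma)\in\autf(\CM)$ and $\sigma\in\autf_{\res}(p)$. Thus the four subgroups coincide and $\gall^{1}(p)=\gall^{\fix}(p)=\gall^{\res}(p)$; the same pointwise-fixing-a-model argument identifies the kernel of $\aut(\CM)\to\gall^{\res}(p)$ with $\autf(\CM)$, and the resulting continuous bijection $\gall(T)\to\gall^{\res}(p)$, being a quotient map by construction, is a homeomorphism, so $\gall^{\res}(p)\cong\gall(T)$ as topological groups. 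The final claim, $\gall^{1}(p)/(\gall^{1}(p))'\cong H_{1}(p)$, is the identification established in \cite{DKL}: $1$-cycles of the homology complex of $p$ (amalgamation-coherent families of realizations respecting Lascar-type constraints) are matched with words in $\gall^{1}(p)$, with $2$-boundaries mapping to products of commutators.

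\emph{Main obstacle.} The genuinely non-formal ingredients are the locality of the Lascar distance in Step 1 (and, relatedly, the closedness of $N_{\lambda}$ in the $G$-compact case, which hinges on $\equiv^{L}=\equiv^{KP}$), and, above all, the homology identification in Step 3, for which I would simply invoke \cite{DKL} rather than reprove it.
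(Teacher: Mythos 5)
The paper does not prove this statement: it is imported verbatim as a Fact from \cite{DKL}, so there is no in-paper argument to compare yours against. Your reconstruction is nevertheless sound and follows the same standard route as \cite{DKL}: the equality $\autf^{\omega}(p)=\autf_{\fix}(p)$ via uniform boundedness of Lascar distance on finite subtuples plus the compactness/locality lemma; quasi-compactness and connectedness inherited through the quotient map from $\gall(T)$; and, for the type of a model, the observation that an automorphism whose restriction agrees on an enumeration of a model with a Lascar-strong one is itself Lascar-strong, which collapses the three subgroups and gives $\gall^{\res}(p)\cong\gall(T)$. The only substantive ingredient you do not supply is the identification of $\gall^{1}(p)/(\gall^{1}(p))'$ with $H_{1}(p)$, which you explicitly defer to \cite{DKL} --- exactly as the paper itself does --- so nothing is missing relative to what the paper asserts.
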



The last statement in the above fact explains the connection between the relativized Lascar group and the model theoretic homology group $H_1$ of the type. 
Let us recall now the key definitions in the homology theory in model theory. We fix a ternary automorphism-invariant relation $\indo^*$ between
small sets of $\CM$ satisfying
\begin{itemize}
\item
finite  character: for any sets $A,B,C$, we have $A\indo^*_CB$ iff $a\indo^*_Cb$ for any finite tuples $a\in A$ and $b\in B$;

\item normality: for any sets $A$, $B$ and $C$, if $A\indo^* _C B$, then $A\indo^*_C \acl(BC)$;

\item
symmetry: for any sets $A,B,C$, we have $A\indo^*_CB$ iff $B\indo^*_CA$;
\item
transitivity: $A\indo^*_BD$ iff $A\indo^*_BC$ and $A\indo^*_CD$,  for any sets $A$ and
$B\subseteq	 C\subseteq D$;
\item
 extension: for any sets $A$ and $B\subseteq C$, there is $A'\equiv_BA$ such that
 $A'\indo^*_BC$.
\end{itemize}

\noindent Throughout this paper we call the above axioms {\bf the basic 5 axioms}.
We say that $A$ is $*$-independent from $B$ over $C$ if $A\indo^*_C B$. Notice that there is at least one such relation for any theory, namely, 
the {\em trivial independence relation} given by: For any sets $A,B,C$, put $A\indo^*_BC$. Of course there is a non-trivial such relation when $T$ is simple or rosy, given by forking or thorn-forking, respectively.


\begin{notation}
Let $s$ be an arbitrary finite set of natural numbers. Given any subset
$X\subseteq \CP(s)$, we may view $X$ as a category where for any
$u, v \in X$, $\mor(u,v)$ consists of a single morphism 
$\iota_{u,v}$ if $u\subseteq v$, and $\mor(u,v)=\emptyset$ otherwise. 
If $f\colon X \raw \CC_0$ is any functor into some category $\CC_0$, then for any $u, v\in X$ with $u \subseteq v$, we let $f^u_v$ denote the morphism
$f(\iota_{u,v})\in \mor_{\CC_0}(f(u),f(v))$.
We shall call $X\subseteq \CP(s)$ a \emph{primitive category} if $X$ is non-empty and  \emph{downward closed}; i.e.,  for any $u, v\in \CP(s)$,  if $u\subseteq v$ and $v\in X$ then $u\in X$. (Note that all primitive categories have  the empty set $\emptyset\subseteq \omega$ as an object.)

We use now $\CC$ to denote the category  whose objects are the small subsets of $\CM$, and whose morphisms are elementary maps.
For a functor $f:X\to \CC$ and objects $u\subseteq v$ of $X$, $f^u_v(u)$ denotes the set $f^u_v(f(u))(\subseteq f(v))$.
\end{notation}

\begin{definition}\label{p-*-functors}
By a \emph{$*$-independent functor in $p$}, we mean a functor $f$ from some primitive category $X$ into $\CC$ 
satisfying the following:

\be
\item If $\{ i \}\subseteq \omega$ is an object in $X$, then $f(\{ i \})$ is of the form $\acl(Cb)$ where  
$b\models p$, $C=\acl(C)=f^{\emptyset}_{\{ i \}}(\emptyset)$,  and $b\indo^* C$.

\smallskip

\item Whenever $u(\neq \emptyset)\subseteq \omega$ is an object in $X$, we have
\[ f( u) = \acl \left( \bigcup_{i\in u} f^{\{ i \}}_u(\{ i \}) \right)
\]
and $\{f^{\{i\}}_u(\{i\})|\ i\in u \}$ is $*$-independent over $f^\emptyset_u(\emptyset)$.
\ee
 We let $\CA^*_p$  denote the family of all $*$-independent functors in $p$.

 A $*$-independent functor $f$  is called a {\em $*$-independent $n$-simplex} in $p$  if $f(\emptyset)=\emptyset$, our named algebraically closed set, and $\dom(f)=\CP(s)$ with $s\subseteq \omega$ and  $|s|=n+1$.  We call $s$ the {\em support} of $f$ and denote it by $\supp(f)$.
\end{definition}

In the rest we may call a  $*$-independent $n$-simplex in $p$ just  an {\em $n$-simplex} of $p$, as far as no  confusion arises.

\begin{definition} Let $n\geq 0$.
 We define:
\begin{align*}
&S_n(\CA^*_p):= \{\, f\in \CA^*_p \mid \mbox{$f$ is an $n$-simplex  of $p$}\, \}\\
&C_n(\CA^*_p): =  \mbox{the free abelian group generated by $S_n(\CA^*_p)$.}
\end{align*}
An element of $C_n(\CA^*_p)$ is called an \emph{$n$-chain} of $p$.
The support of a chain
$c$, denoted by $\supp(c)$, is the union of the supports of all the simplices that appear in $c$ with a non-zero
coefficient.
Now for $n\geq 1$ and  each $i=0, \ldots,  n$, we  define a group homomorphism
\[ \Bd^i_n\colon C_n(\CA^*_p) \raw C_{n-1}(\CA^*_p)
\]
by  putting, for any $n$-simplex   $f\colon \CP(s) \raw \CC$ in $S_n(\CA^*_p)$ where  $s = \{ s_0 <\cdots < s_n \}\subseteq \omega$,
\[ \Bd_n^i(f):= f\restriction  \CP(s\setminus\{s_i\})
\]
and then extending linearly to all $n$-chains in $C_n(\CA^*_p)$. Then we define  the \emph{boundary map}
\[ \Bd_n\colon C_n(\CA^*_p)\raw C_{n-1}(\CA^*_p)
\]
by
\[ \Bd_n(c):=\sum_{0\leq i\leq n}(-1)^i \Bd_n^i(c).
\]
We shall often refer to $\Bd_n(c)$ as the \emph{boundary of $c$}. Next, we define:
\begin{align*}
&Z_n(\CA^*_p):= \ker \, \Bd_n\\
& B_n(\CA^*_p):= \Im \, \Bd_{n+1}.
\end{align*}

\noindent The elements of $Z_n(\CA^*_p)$ and $B_n(\CA^*_p)$ are called \emph{$n$-cycles} and \emph{$n$-boundaries} in $p$, respectively.
It is straightforward to check that
$ \Bd_{n}\circ \Bd_{n+1} =0.$
Hence we can now define the group
\[ H^*_n(p):= Z_n(\CA^*_p)/B_n(\CA^*_p)
\]
called the \emph{$n$}th \emph{$*$-homology group} of $p$.
\end{definition}

\begin{notation}
\be
\item
For $c\in Z_n(\CA^*_p)$, $[c]$ denotes the homology class of $c$ in $H^*_n(p)$.
\item When $n$ is clear from the context, we shall often omit it in $\Bd^i_n$ and in $\Bd_n$, 
writing simply  $\Bd^i$ and $\Bd$.
\ee
\end{notation}

\begin{definition}
A $1$-chain  $c\in C_1(\CA^*_p)$ is called a \emph{$1$-$*$-shell} (or just a $1$-shell) in $p$ if it is of the form
\[c =f_0-f_1+f_2
\]
where $f_i$'s are $1$-simplices of $p$ satisfying
\[ \Bd^i f_j = \Bd^{j-1} f_i \quad \mbox{whenever $0\leq i < j \leq 2$.}
\]
Hence, for $\supp(c)=\{n_0<n_1<n_2\}$ and $ k\in \{0,1, 2\}$, it follows that
$$\supp(f_k)=\supp(c)\smallsetminus \{n_k\}.$$
\end{definition}

\noindent Notice that the boundary of any $2$-simplex is a $1$-shell. Recall that 
 a notion of  an {\em amenable} collection of functors into
a category is introduced in \cite{GKK}.
 Due to the 5 axioms of $\indo^*$, it easily follows that
$\CA^*_p$ forms such a collection of functors into $\CC$. Hence the following corresponding fact holds. 

\begin{fact}\label{H1 in amenable family} (\cite{GKK} or \cite{DKL})
\[ H^*_1(p) = \{ [c] \mid c \mbox{ is a } \mbox{$1$-$*$-shell with}\ \supp(c) = \{ 0, 1, 2 \} \, \}.
\]
\end{fact}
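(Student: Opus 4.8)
The plan is to prove the two inclusions separately. One direction is immediate: if $c=f_0-f_1+f_2$ is a $1$-shell, then the matching-face relations $\Bd^i f_j=\Bd^{j-1}f_i$ ($0\le i<j\le 2$) force $\Bd c=0$, so $c\in Z_1(\CA^*_p)$ and $[c]\in H^*_1(p)$; hence the right-hand side is contained in $H^*_1(p)$. For the reverse inclusion it suffices to establish \textbf{(A)} every $1$-cycle of $p$ is homologous to a $1$-shell, together with \textbf{(B)} every $1$-shell is homologous to a $1$-shell whose support is exactly $\{0,1,2\}$. As remarked just before the statement, the basic $5$ axioms make $\CA^*_p$ an amenable collection of functors into $\CC$ in the sense of \cite{GKK}, so, strictly speaking, one verifies amenability and invokes the corresponding general lemma of \cite{GKK}; what follows is an outline of the argument behind it.

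For \textbf{(A)}, write a given $1$-cycle as $c=\sum_k \epsilon_k f_k$ with $\epsilon_k=\pm 1$ and each $f_k$ a $1$-simplex of $p$ (allowing repetitions; by relabeling supports, the simplices involved in any single step may be taken distinct). The condition $\Bd c=0$ says the $0$-faces $\Bd^0 f_k,\Bd^1 f_k$ cancel in $C_0(\CA^*_p)$, which organizes $c$ into closed ``edge-paths'' on the finite family of $0$-simplices occurring in it. Given two consecutive edges $f,g$ of such a path sharing a common $0$-face, amalgamation in $\CA^*_p$ yields a $2$-simplex $h$ with $\Bd^2 h=f$, $\Bd^0 h=g$ and $\Bd^1 h=:f'$ a single new $1$-simplex; since $\Bd h\in B_1(\CA^*_p)$ and $\Bd h=\Bd^0 h-\Bd^1 h+\Bd^2 h$, we get $f+g\equiv f'\pmod{B_1(\CA^*_p)}$, which shortens the path by one edge. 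Iterating this contracts every closed path, and careful bookkeeping (as in \cite{GKK}) brings $c$ to a single $1$-shell. For \textbf{(B)}: either note that the supports of all $2$-simplices produced above may be chosen freely, so the final shell can be arranged to have support $\{0,1,2\}$; or, given a shell $c$ with $\supp(c)=\{n_0<n_1<n_2\}$, precompose each simplex of $c$ with the relabeling induced by the order isomorphism onto $\{0,1,2\}$ (legitimate by closure of $\CA^*_p$ under such relabelings) to obtain a shell $c'$ with $\supp(c')=\{0,1,2\}$, and check $[c]=[c']$ by interpolating $2$-simplices edge-by-edge between the corresponding edges of $c$ and $c'$ — first passing to an auxiliary support disjoint from both — so that their boundaries telescope to $c-c'$.

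The main obstacle, present at every merge and interpolation above, is the amalgamation step itself: one must produce a $2$-simplex $h\in\CA^*_p$ with prescribed faces, i.e.\ a functor satisfying the conditions of Definition \ref{p-*-functors}, in particular with $\{h^{\{i\}}_u(\{i\})\mid i\in u\}$ being $*$-independent over $h^\emptyset_u(\emptyset)$ on every face $u$. That such $h$ always exist is exactly the amenability of $\CA^*_p$, and it is here that the basic $5$ axioms enter — most crucially \emph{extension} and \emph{transitivity}, which deliver the required independent amalgams; everything else is routine manipulation of the boundary maps.
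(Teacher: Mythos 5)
Your proposal matches the paper's treatment: the paper offers no independent proof of this fact, but justifies it exactly as you do, by observing that the basic 5 axioms make $\CA^*_p$ an amenable collection of functors and then citing the general result of \cite{GKK} (or \cite{DKL}). Your additional sketch of the underlying argument --- shells are cycles; cycles contract to shells by amalgamating adjacent edges into $2$-simplices; supports are normalized to $\{0,1,2\}$ by relabeling and interpolation --- is a faithful outline of the cited lemma's proof and correctly identifies amenability (via extension and transitivity) as the crux.
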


We now recall basic notions and results appeared in \cite{DKL}. 
{\bf For the rest of this section we assume that $p$ is the strong type of
an algebraically closed set.}

\begin{definition}\label{representation}
\be\item Let $f:\CP(s)\to \CC$ be an $n$-simplex of $p$.
For $u\subseteq s$ with $u = \{ i_0 <\ldots <i_k \}$, we shall write $f(u)=[a_0 \ldots a_k]_u$, where each $a_j\models p$ is 
an algebraically closed tuple as assumed above, if $f(u)=\acl(a_0\ldots a_k)$, and $\acl(a_j)=f^{ \{ i_j \} } _u (\{i_j\})$. So, $\{a_0,\ldots,a_k\}$ is $*$-independent.
Of course, if we write $f(u)\equiv [b_0 \ldots b_k]_u$, then it means that there is an automorphism sending $a_0\ldots a_k$ to $b_0\ldots b_k$. 
\item
Let $s=f_{12}-f_{02}+f_{01}$ be a $1$-$*$-shell in $p$ such that $\supp(f_{ij})=\{n_i,n_j\}$ with $n_i<n_j$ for $0\le i<j\le 2$. Clearly there is a quadruple $(a_0,a_1,a_2,a_3)$ of realizations of $p$ such that $f_{01}(\{n_0,n_1\})\equiv[a_0 a_1]_{\{n_0,n_1\}}$, $f_{12}(\{n_1,n_2\})\equiv[a_1 a_2]_{\{n_1,n_2\}}$, and $f_{02}(\{n_0,n_2\})\equiv[a_3 a_2]_{\{n_0,n_2\}}$. We call this quadruple a {\em  representation of $s$}. For  any such representation of $s$,  call $a_0$  an {\em  initial point}, $a_3$ a {\em terminal point},  and $(a_0,a_3)$ an {\em  endpoint pair} of the representation.
\ee
\end{definition}

\noindent We summarize some properties of endpoint pairs of $1$-shells. We define an equivalence relation $\sim^*$ on the set of pairs of realizations $p$ as follows: For $a,a',b,b'\models p$, $(a,b)\sim^*(a',b')$ if two pairs $(a,b)$ and $(a',b')$ are endpoint pairs of $1$-shells $s$ and $s'$ respectively such that $[s]=[s']\in H^*_1(p)$. 
We write $\CE^*=p(\CM)\times p(\CM)/\sim^*$. We denote the class of $(a,b)\in p(\CM)\times p(\CM)$  by $[a,b]$. Now, define a binary operation $+_{\CE^*}$ on $\CE^*$ as follows: For $[a,b],[b',c']\in \CE^*$, $[a,b]+_{\CE^*} [b',c']=[a,c]$, where $bc\equiv b'c'$.
\begin{Fact}\label{summary_H1_representation}
The operation $+_{\CE^*}$ is well-defined, and the
pair $(\CE^*,+_{\CE^*})$ forms an abelian group which is isomorphic to $H^*_1(p)$.
More specifically, for $a,b,c\models p$ and $\sigma\in \aut(\CM)$, we have:
\begin{itemize}
	\item $[a,b]+[b,c]=[a,c]$;
	\item $[a,a]$ is the identity element;
	\item $-[a,b]=[b,a]$;
	\item $\sigma([a,b]):=[\sigma(a),\sigma(b)]=[a,b]$; and
	\item $f:\CE^*\to H^*_1(p)$ sending $[a,b]\mapsto [s]$, where $(a,b)$ is an endpoint pair of $s$, is a group isomorphism.
\end{itemize}
We identify $\CE^*$ and $H^*_1(p)$.
\end{Fact}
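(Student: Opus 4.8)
The plan is to build, out of Fact \ref{H1 in amenable family} and the independence axioms, a bijection $\CE^*\to H^*_1(p)$ and then transport the group structure along it; the listed identities fall out afterwards. I would begin with two preliminary observations. First, every pair $(a,b)\in p(\CM)\times p(\CM)$ is an endpoint pair of some $1$-$*$-shell: given $a,b\models p$, the extension and symmetry axioms let one assemble a configuration of realizations of $p$ whose associated shell admits a representation with initial point $a$ and terminal point $b$. Second — and this is the technical heart — the homology class of a $1$-$*$-shell depends only on its endpoint pair: if $(a,b)$ is an endpoint pair of both $s$ and $s'$, then $[s]=[s']$ in $H^*_1(p)$. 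The cleanest route to this is to produce an explicit $2$-chain $d$ with $\Bd d=s-s'$, assembled from the amalgamation axioms in the same spirit as the verification that $\CA^*_p$ is an amenable family; alternatively, using Fact \ref{H1 in amenable family} one reduces to the case where $s$ and $s'$ already agree on two of their three faces and checks that replacing the third by an isomorphic face over the same vertices changes nothing in homology.

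Granting these two points, $f\colon p(\CM)\times p(\CM)\to H^*_1(p)$, $(a,b)\mapsto [s]$ for any $1$-$*$-shell $s$ with endpoint pair $(a,b)$, is well-defined; by the very definition of $\sim^*$ it is constant on $\sim^*$-classes and injective on the quotient, which in particular exhibits $\sim^*$ as an equivalence relation. It is onto because, by Fact \ref{H1 in amenable family}, every class of $H^*_1(p)$ is represented by a $1$-$*$-shell with support $\{0,1,2\}$, and such a shell has an endpoint pair. So $f$ descends to a bijection $\CE^*\to H^*_1(p)$. Next I would prove the \emph{concatenation lemma}: for $1$-$*$-shells $s_1,s_2$, after replacing $s_2$ by its image under a suitable $\sigma\in\aut(\CM)$ — which yields an isomorphic shell, with the transported endpoint pair and the same homology class — so that a representation of $s_2$ has initial point equal to the terminal point of a chosen representation of $s_1$, say the endpoint pairs are $(a,b)$ and $(b,c)$, one glues $s_1$ and $s_2$ along the shared vertex $b$ (again using extension and transitivity) to obtain a $1$-$*$-shell $s_3$ with endpoint pair $(a,c)$ and $[s_3]=[s_1]+[s_2]$. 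Since $f$ sends the class $[a,b]$ to $[s_1]$, etc., this says exactly that under $f$ the recipe ``$[a,b]+_{\CE^*}[b',c']=[a,c]$ with $bc\equiv b'c'$'' corresponds to addition in $H^*_1(p)$; in one stroke this shows $+_{\CE^*}$ is well-defined and that $f$ is an isomorphism of groups, whence $(\CE^*,+_{\CE^*})$ is abelian.

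The remaining bullets are then formal consequences. The identity $[a,b]+[b,c]=[a,c]$ is a direct instance of the operation; $f([a,a])$ is idempotent in $H^*_1(p)$ by the concatenation lemma (gluing such a shell to itself), hence $0$, so $[a,a]$ is the identity; $-[a,b]=[b,a]$ since $[a,b]+[b,a]=[a,a]=0$; and $\sigma([a,b])=[\sigma a,\sigma b]=[a,b]$ because $\sigma$ sends any shell with endpoint pair $(a,b)$ to an isomorphic one with endpoint pair $(\sigma a,\sigma b)$ and the same class, $\CA^*_p$ being $\aut(\CM)$-invariant. The step I expect to be the main obstacle is the second preliminary observation together with the concatenation lemma: both require constructing cobounding $2$-chains explicitly while tracking which simplices and vertices must literally coincide, and it is precisely there that the independence axioms — extension and transitivity above all — get used in an essential way.
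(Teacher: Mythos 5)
The paper states this result as a \emph{Fact} recalled from \cite{DKL} and gives no proof of its own, so there is nothing in the text to compare against line by line; your outline does reproduce the architecture of the argument in \cite{DKL}: (i) every pair of realizations of $p$ is an endpoint pair of some $1$-$*$-shell, (ii) the homology class of a $1$-$*$-shell is determined by any one of its endpoint pairs, (iii) a concatenation lemma giving additivity under the bijection with $H^*_1(p)$, and (iv) the listed identities as formal consequences. That is the right decomposition, and you correctly identify (ii) and (iii) as the load-bearing steps.

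Two caveats. First, essentially all of the mathematical content of the statement sits inside (ii) and (iii), and your proposal describes a plan for them rather than carrying them out; moreover the alternative route you float for (ii) --- reducing to the case where $s$ and $s'$ already share two faces --- is not obviously a legitimate reduction (it is unclear that two shells with a common endpoint pair are linked by a chain of such elementary replacements), whereas the direct construction of a $2$-chain $d$ with $\Bd d = s - s'$ via the extension and transitivity axioms is what actually works. Second, your justification of $\sigma([a,b])=[a,b]$ is circular as written: $\aut(\CM)$-invariance of $\CA^*_p$ only yields that $\sigma$ induces an automorphism of $H^*_1(p)$, not that $[\sigma(s)]=[s]$, which is exactly the nontrivial ``triviality of the action'' statement. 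The clean fix is available from the definitions: a representation of $s$ is only required to realize the relevant types (the defining conditions use $\equiv$, not equality), so the set of representations of the \emph{same} shell $s$ is $\aut(\CM)$-invariant, hence $(\sigma(a),\sigma(b))$ is already an endpoint pair of $s$ itself and (ii) applies directly. With that repair, and with (ii) and (iii) actually executed, your argument goes through.
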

\begin{fact}
$H_1^*(p)$ is isomorphic to  $G/N$ where $G:=\aut(p)$ and $N$ is the normal  subgroup of $G$ consisting of all automorphisms fixing setwise all
orbits of elements of $p(\CM)$ under the action of $G'$ (so $G/N$ is independent from the choice of a monster model). Hence $H_1^*(p)$ is independent from the choice of $\indo^*$ and we write $H_1(p)$ for $H_1^*(p)$.

\end{fact}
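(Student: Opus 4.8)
\emph{The plan} is to present $H^*_1(p)$ as an explicit quotient of $G:=\aut(p)$ and then to identify the kernel with $N$, working throughout with the identification $\CE^*\cong H^*_1(p)$ of Fact~\ref{summary_H1_representation}. Fix a realization $a\models p$ and put
\[
\Phi\colon G\longrightarrow \CE^*,\qquad \Phi(\sigma):=[a,\sigma(a)].
\]
By the computation rules of Fact~\ref{summary_H1_representation} (namely $[x,y]+[y,z]=[x,z]$ and $\sigma([x,y])=[x,y]$, the latter giving $[a,\tau(a)]=\sigma([a,\tau(a)])=[\sigma(a),\sigma\tau(a)]$) one gets
\[
\Phi(\sigma)+\Phi(\tau)=[a,\sigma(a)]+[\sigma(a),\sigma\tau(a)]=[a,\sigma\tau(a)]=\Phi(\sigma\tau),
\]
so $\Phi$ is a group homomorphism. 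It is onto: since $\dcl(\emptyset)=\acl(\emptyset)$, $\aut(\CM)$ acts transitively on $p(\CM)$, so given $[d,e]\in\CE^*$ we may pick $\rho\in\aut(\CM)$ with $\rho(d)=a$, whence $[d,e]=\rho([d,e])=[a,\rho(e)]$, and then $\tau\in\aut(\CM)$ with $\tau(a)=\rho(e)$, whence $[d,e]=[a,\tau(a)]=\Phi(\tau{\restriction}p(\CM))$. Therefore $H^*_1(p)\cong\CE^*\cong G/\ker\Phi$, and everything reduces to proving $\ker\Phi=N$.

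Since $\CE^*$ is abelian, $G'\subseteq\ker\Phi$, and trivially $\stab_G(a)\subseteq\ker\Phi$; hence $G'\stab_G(a)\subseteq\ker\Phi$, and in particular $N=\bigcap_{b\models p}G'\stab_G(b)\subseteq\ker\Phi$. For the reverse inclusion, note $\ker\Phi$ is normal in $G$, and that for $\rho\in\aut(\CM)$ with $\rho(a)=b$ we have $[b,\sigma(b)]=[\rho(a),\sigma\rho(a)]=\rho([a,(\rho^{-1}\sigma\rho)(a)])=\Phi((\rho^{-1}\sigma\rho){\restriction}p(\CM))$; by normality of $\ker\Phi$ this gives $\sigma\in\ker\Phi$ if and only if $[b,\sigma(b)]=0$ for every $b\models p$. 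Hence the whole statement reduces to the following key point:
\[
\text{for all }b,c\models p,\qquad [b,c]=0\text{ in }\CE^*\ \Longleftrightarrow\ c\in G'\!\cdot b .
\]
Granting it, $\sigma\in\ker\Phi$ iff $\sigma(b)\in G'b$ for all $b\models p$, i.e.\ iff $\sigma$ fixes every $G'$-orbit on $p(\CM)$ setwise, i.e.\ iff $\sigma\in N$; so $\ker\Phi=N$ and $H^*_1(p)\cong G/N$. (As a by-product, $G'\stab_G(b)=\ker\Phi$ for every $b$, consistently with $N=\bigcap_b G'\stab_G(b)$.) Finally, $G$, $G'$, and the action of $G$ on $p(\CM)$ make no reference to $\indo^*$, so the isomorphism type of $G/N$, hence of $H^*_1(p)$, is the same for every independence relation satisfying the basic 5 axioms, and for every monster model; this justifies writing $H_1(p)$.

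It remains to prove the key point, and this is where the real work lies. The direction $(\Leftarrow)$ is the inclusion $G'\subseteq\ker\Phi_b$ (with $\Phi_b(\sigma):=[b,\sigma(b)]$) already used above. For $(\Rightarrow)$: if $[b,c]=0$ in $\CE^*$, pick a $1$-$*$-shell $s$ with endpoint pair $(b,c)$; by the isomorphism of Fact~\ref{summary_H1_representation} its class is $[s]=0$ in $H^*_1(p)$, so $s\in B_1(\CA^*_p)$, i.e.\ $s=\Bd_2 w$ for a $2$-chain $w=\sum_k\varepsilon_k g_k$ with $\varepsilon_k=\pm1$ (after splitting multiplicities) and each $g_k$ a $2$-simplex of $p$. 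Expanding $\Bd_2 w=\sum_k\varepsilon_k\Bd_2 g_k$ and matching coefficients with $s=f_0-f_1+f_2$ in the free abelian group $C_1(\CA^*_p)$, all $1$-simplices except the three faces of $s$ must cancel, so the $g_k$ glue along their shared $1$-simplices into a triangulated disk with boundary $s$. The plan is then to traverse the triangles of this disk incident to the vertex of $s$ that is enumerated by $b$ along one of its incident edges and by $c$ along the other: each such triangle $g_k$, being a genuine functor with coherent vertex values, supplies an automorphism matching the two enumerations at that vertex, and composing these automorphisms around the disk produces an automorphism in $G'$ — a product of commutators, one per triangle — that carries $b$ to $c$. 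Making this precise (tracking which realization occurs at each occurrence of the ambiguous vertex, choosing compatible orientations, and checking that the accumulated automorphism is a commutator word with the prescribed effect) is the main obstacle; granted the key point, everything else is a formal consequence of Facts~\ref{summary_H1_representation} and~\ref{H1 in amenable family}.
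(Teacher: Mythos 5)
Your reduction is correct and cleanly executed: the map $\Phi(\sigma)=[a,\sigma(a)]$ is a surjective homomorphism from $G$ onto $\CE^*\cong H^*_1(p)$ by Fact~\ref{summary_H1_representation}, the identification $N=\bigcap_{b\models p}G'\stab_G(b)$ is right, and the conjugation/normality argument correctly reduces the whole statement to your ``key point'' that $[b,c]=0$ if and only if $c\in G'\cdot b$. The backward direction of that equivalence and the final deduction that $G/N$ does not depend on $\indo^*$ or on the monster model are also fine. For comparison: the paper states this Fact without proof (it is recalled from \cite{DKL}), so there is no in-paper argument to measure against; but your key point is precisely the equivalence the paper itself records as Fact~\ref{H1properties}(1), parts (a) and (c), together with the observation in Remark~\ref{modelinterdef}(2) that (c) amounts to $b$ being the image of $a$ under an element of the commutator subgroup.

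The genuine gap is the forward direction of the key point, which you yourself flag as ``the main obstacle'' and do not carry out. Moreover, the sketch rests on an unjustified leap: from $s=\Bd_2 w$ and coefficient cancellation in the free abelian group $C_1(\CA^*_p)$ you conclude that the $2$-simplices of $w$ ``glue into a triangulated disk with boundary $s$.'' The cancellation only gives a pairing of faces; the supports of the $g_k$ need not lie in $\{0,1,2\}$, and the resulting complex need not be a disk at all. Reducing an arbitrary $2$-chain with $1$-shell boundary to a usable normal form (a chain-walk / RN- or NR-pattern, cf.\ Fact~\ref{rnpattern}) is exactly the nontrivial content of the amenability and prism arguments of \cite{GKK} and \cite{KKL}, and only after that normalization can one extract a balanced chain-walk as in Fact~\ref{H1properties}(1)(b) and peel off one commutator per matched pair of edges as in Remark~\ref{modelinterdef}(2). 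So either cite Fact~\ref{H1properties}(1) (which is stated for a fixed $\indo^*$, so no circularity arises) and finish in one line, or supply a genuine proof of the normal-form step; as written, the hardest part of the theorem is missing.
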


\begin{Fact}\label{H1properties}  Let $p$ be the fixed  strong type of an algebraically closed set.
\be 
\item Let $\indo^*$ be an independence relation satisfying the 5 basic axioms. Let $a,b \models p$.
Then the following are equivalent.
\be
\item $[a,b]=0\in H_1(p)$;
\item There is a balanced-chain-walk from $a$ to $b$, i.e., there are some $n\geq 0$ and a  finite sequence $(d_i)_{0\le i\le 2n+2}$ of realizations of $p$ satisfying the following conditions:
\be
	\item $d_0=a$, and $d_{2n+2}=b$;
	\item $\{d_{j},d_{j+1}\}$ is $*$-independent for each $j\le 2n+1$; and
	\item there is a bijection $$\sigma:\{0,1,\ldots,n\}\to\{0,1,\ldots,n\}$$ such that
	$d_{2i} d_{2i+1}\equiv d_{2\sigma(i)+2}d_{2\sigma(i)+1}$ for $ i \le n$;
\ee

\item  There are some $n\geq 0$ and   finite sequences $(d_i:0\le i\le n)$, $(d^j_i: i< n,\ 1\le j\le 3)$ of realizations of $p$
such that $d_0=a$, $d_n=b$, and for each $i<n$,  $d_id^1_i\equiv d^3_id^2_i$, $d^1_id^2_i\equiv d_{i+1}d^3_i$. 
\ee
In particular, if $a\equiv^L b$ then $[a,b]=0\in H_1(p)$.

\item The following are equivalent. 
\be\item $\gall^1(p)$ is abelian;
\item For all $a,b \models p$,  $[a,b]=0$ in $H_1(p)$  iff $a \equiv^L b$. 
\ee
\item $p$ is a Lascar type (i.e. $a\equiv^Lb$ for for any $a,b\models p$) iff $H_1(p)$ is trivial and $\gall^1(p)$ is abelian. 
\ee
\end{Fact}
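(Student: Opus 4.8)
The plan is to route everything through the group isomorphism $H_1(p)\cong(\CE^*,+_{\CE^*})$ of Fact~\ref{summary_H1_representation} and the identification of $H_1(p)$ with the abelianization of $\gall^1(p)$ from Fact~\ref{rel.lascargp}, so that most of the argument becomes a telescoping computation in $\CE^*$ using $[a,b]+[b,c]=[a,c]$, $[a,a]=0$, $-[a,b]=[b,a]$, and $[\tau(a),\tau(b)]=[a,b]$ for $\tau\in\aut(\CM)$. For part~(1), the implications $(b)\Rightarrow(a)$ and $(c)\Rightarrow(a)$ are exactly such computations. Given a balanced-chain-walk $(d_i)_{i\le 2n+2}$ with matching $\sigma$, one writes $[a,b]=\sum_{j=0}^{2n+1}[d_j,d_{j+1}]$, splits it into the even steps $\sum_{i\le n}[d_{2i},d_{2i+1}]$ and the odd steps $\sum_{i\le n}[d_{2i+1},d_{2i+2}]$, uses $d_{2i}d_{2i+1}\equiv d_{2\sigma(i)+2}d_{2\sigma(i)+1}$ to obtain $[d_{2i},d_{2i+1}]=[d_{2\sigma(i)+2},d_{2\sigma(i)+1}]=-[d_{2\sigma(i)+1},d_{2\sigma(i)+2}]$, and, reindexing by $\sigma$, concludes that the even part is the negative of the odd part, so $[a,b]=0$. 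For $(c)\Rightarrow(a)$ it is enough to see that $[d_i,d_{i+1}]=0$ for every $i<n$: telescoping through $d^1_i,d^2_i,d^3_i$ and substituting $d_id^1_i\equiv d^3_id^2_i$ and $d^1_id^2_i\equiv d_{i+1}d^3_i$,
\begin{align*}
[d_i,d_{i+1}] &= [d_i,d^1_i]+[d^1_i,d^2_i]+[d^2_i,d^3_i]+[d^3_i,d_{i+1}]\\
&= [d^3_i,d^2_i]+[d_{i+1},d^3_i]+[d^2_i,d^3_i]+[d^3_i,d_{i+1}] = 0 .
\end{align*}
The final sentence of~(1) is then handled by reducing to a single indiscernible-sequence move: if $a\equiv^L b$ with $a,b\models p$, the Lascar-distance fact gives infinite indiscernible sequences $I_1,\dots,I_k$, say $I_j=(c^j_0,c^j_1,\dots)$, and tuples $a=a_0,\dots,a_k=b$ (all realizing $p$, since $\dcl(\emptyset)=\acl(\emptyset)$) with $a_{j-1}I_j$ and $a_jI_j$ indiscernible; for each $j$ the triple $d^1:=c^j_0$, $d^2:=c^j_1$, $d^3:=c^j_0$ witnesses $(c)$ with $n=1$ for the step $a_{j-1}\to a_j$, since indiscernibility of $(a_{j-1},c^j_0,c^j_1,\dots)$ gives $a_{j-1}c^j_0\equiv c^j_0c^j_1$ and indiscernibility of $(a_j,c^j_0,c^j_1,\dots)$ gives $c^j_0c^j_1\equiv a_jc^j_0$; concatenating these $k$ steps gives $(c)$ for $(a,b)$, hence $[a,b]=0$.

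The remaining implication $(a)\Rightarrow(b)$ (and, by the same method, $(a)\Rightarrow(c)$) is the combinatorial core, and the step I expect to be the main obstacle. Here $[a,b]=0$ means, by Fact~\ref{summary_H1_representation} (together with Fact~\ref{H1 in amenable family}), that a $1$-$*$-shell $s$ with endpoint pair $(a,b)$ is a $1$-boundary, i.e.\ $s=\Bd_2 c$ for some $2$-chain $c=\sum_k\varepsilon_kg_k$ with the $g_k$ $*$-independent $2$-simplices in $p$ and $\varepsilon_k=\pm1$ (repeat simplices so that all coefficients are $\pm1$). The plan is to induct on $\sum_k|\varepsilon_k|$: in the telescoping cancellation that computes $\Bd_2 c$, isolate a $2$-simplex $g_k$ one of whose boundary $1$-simplices cancels against a $1$-simplex of the current shell, remove it---this strictly decreases the complexity of the remaining chain while modifying the current shell and appending to the emerging walk a block of the prescribed balanced form, whose consecutive pairs are automatically $*$-independent because any two vertices of a $*$-independent $2$-simplex form a $*$-independent pair of realizations of $p$---and enlarge the matching $\sigma$ by one pair per removed simplex. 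The bookkeeping of faces, orientations, signs, and the exact shape of the appended block is the delicate part; it is precisely the kind of argument run for amenable families of functors in \cite{GKK} and reused in \cite{DKL}, which I would follow. The same $2$-chain witness, read step by step through the $g_k$, also produces the data of $(c)$; thus $(a)\Leftrightarrow(b)\Leftrightarrow(c)$.

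For part~(2), fix $a_0\models p$; by Fact~\ref{rel.lascargp} the homomorphism $h\colon\aut(p)\to\CE^*$, $\tau\mapsto[a_0,\tau(a_0)]$, induces the abelianization isomorphism $\gall^1(p)/(\gall^1(p))'\cong H_1(p)$, and by the last sentence of part~(1) it kills $\autf^1(p)$, so $\autf^1(p)\subseteq\ker h$. Suppose $(a)$ holds, i.e.\ $\gall^1(p)$ is abelian; then $\ker h=\autf^1(p)$. The implication ``$a\equiv^L b\Rightarrow[a,b]=0$'' of~$(b)$ is already covered by part~(1); conversely, if $[a,b]=0$, write $b=\tau(a)$ and $a=\rho(a_0)$, so $0=[a,b]=[\rho(a_0),\tau\rho(a_0)]=[a_0,\rho^{-1}\tau\rho(a_0)]=h(\rho^{-1}\tau\rho)$, whence $\rho^{-1}\tau\rho\in\ker h=\autf^1(p)$, and since $\autf^1(p)\trianglelefteq\aut(p)$ we get $\tau\in\autf^1(p)$, so $a\equiv^L\tau(a)=b$; this gives $(b)$. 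Conversely, suppose $(b)$ holds. For $\tau\in\ker h$ and any $\rho\in\aut(p)$, abelianness of $\CE^*$ gives $h(\rho^{-1}\tau\rho)=h(\tau)=0$, i.e.\ $[\rho(a_0),\tau\rho(a_0)]=0$, so by~$(b)$ $\rho(a_0)\equiv^L\tau\rho(a_0)$; since every realization of $p$ has the form $\rho(a_0)$, this says $\tau\in\autf^1(p)$, so $\ker h=\autf^1(p)$ and $h$ induces an embedding $\gall^1(p)\hookrightarrow\CE^*$; as $\CE^*$ is abelian, so is $\gall^1(p)$, which is~$(a)$.

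Part~(3) follows at once: if $p$ is a Lascar type, then $[a,b]=0$ for all $a,b\models p$ by part~(1), so $\CE^*\cong H_1(p)$ is trivial, and $\autf^1(p)=\aut(p)$, so $\gall^1(p)$ is trivial, hence abelian; conversely, if $H_1(p)$ is trivial and $\gall^1(p)$ is abelian, then by the implication $(a)\Rightarrow(b)$ of part~(2) we have $a\equiv^L b\Leftrightarrow[a,b]=0$ for all $a,b\models p$, and the right-hand side always holds, so $p$ is a Lascar type. To summarize, apart from the peeling induction for $(a)\Rightarrow(b)$ in part~(1)---the one genuinely non-formal step, requiring care with faces, signs, and the matching---the whole argument reduces to telescoping identities in $\CE^*$ together with elementary manipulations with $\ker h$.
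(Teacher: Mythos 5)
The paper does not actually prove this statement: it appears in the block of ``basic notions and results'' recalled from \cite{DKL}, so there is no in-paper proof to compare yours against, and your proposal has to be judged on its own. Most of it holds up. The telescoping computations in $\CE^*$ for (1)(b)$\Rightarrow$(a) and (1)(c)$\Rightarrow$(a), the reduction of ``$a\equiv^L b\Rightarrow[a,b]=0$'' to one indiscernible-sequence move per step of the Lascar distance (your choice $d^1=d^3=c^j_0$, $d^2=c^j_1$ does verify both required type equalities), and the $\ker h$ bookkeeping in parts (2) and (3) are all correct, and they are the natural arguments given Fact \ref{summary_H1_representation}.

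The genuine gap is (1)(a)$\Rightarrow$(b),(c). What you offer there is a plan --- a ``peeling induction'' on a bounding $2$-chain, with the face/sign/matching bookkeeping explicitly deferred to \cite{GKK} and \cite{DKL} --- and that induction is where the entire content of part (1) sits: you never verify that the block appended at each removal has the balanced shape demanded by (b), that the consecutive pairs are $*$-independent, or how the bijection $\sigma$ is assembled across removals. Note that the paper itself signals a shorter route in Remark \ref{modelinterdef}(2) together with the unnumbered Fact preceding Fact \ref{H1properties}: $[a,b]=0$ iff $b=h(a)$ for some $h$ in the commutator subgroup of $\aut(p)$, and a single commutator moving $a$ to $b$ is equivalent to the existence of $d^1,d^2,d^3$ as in (1)(c); iterating over a product of commutators yields (c) with no simplicial combinatorics at all, leaving only the passage to the independent walk of (b). A secondary, repairable issue: in part (2) you cite Fact \ref{rel.lascargp} for ``$h$ induces the abelianization isomorphism,'' but the paper states that sentence only for types of models, whereas here $p$ is merely the strong type of an algebraically closed set; the inclusion $\ker h\subseteq\autf^1(p)$ under abelianness should instead be extracted from the $G/N$ description of $H_1(p)$ together with part (1)(c), after which your argument goes through unchanged.
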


\begin{Remark}\label{modelinterdef}
\be
\item Assume that $p(x)$ is the type of a small model. Then for any $M, N\models p$, the equivalence classes of (equality of) Lascar types of $M$ and $N$ are interdefinable:  
Let $M\equiv^L M'$ and $MN\equiv M'N'$. It suffices to show that $N\equiv^L N'$.
Now there is $f\in \autf(\CM)$ such that $f(M')=M$. Let $N'':=f(N')$ so that $N''\equiv^LN'$ and $MN\equiv MN''$.
Hence $N\equiv_{M}N''$ and $N\equiv^L N''\equiv^L N'$ as wanted. 
When $p$ is  a type  of any tuple we will see that the same holds if $\gall^1(p)$ is abelian (Remark \ref{abel.interdef}).

\item
Notice that for tuples $a,b\models p$, there is a commutator $f$ in $\aut(p)$ such that  $f(a)=b$ if and only if there are $d^1,d^2,d^3\models p$ such that 
$ad^1\equiv d^3d^2$ and $bd_3\equiv d_1d_2$ \ (**). Fact \ref{H1properties}(1)(c) is an iterative application of this to that $[a,b]=0\in H_1(p)$ if and only if $h(a)=b$ for
some $h$  in the commutator subgroup of $\aut(p)$. 

Now we recall the following fact of compact group theory by M. Got\^{o} from \cite[Theorem 9.2]{HM}:
 Assume  $(F,\cdot )$ is a compact connected topological group. Then
$F'$, the commutator subgroup of $F$, is  simply the set of commutators in $F$, i.e. $$F'=\{ f\cdot g \cdot f^{-1}\cdot g^{-1}\mid f,g \in F\}.$$  
It follows that $F'$ is a closed subgroup of $F$, and  both $F'$ and $F/F'$ are  compact connected groups as well. 

Due to the theorem   we newly observe here that if $T$ is $G$-compact then in Fact \ref{H1properties}(1)(c), we can choose $n=1$: There are $d^1,d^2,d^3\models p$
 such that   above (**) holds.
\ee
\end{Remark}

\section{Amalgamation properties of strong types in simple theories}

Note that if $T$ is simple then since $T$ is $G$-compact, each $\gall^{\lambda}(p)$ is a compact
(i.e., quasi-compact and Hausdorff) connected group, so it is divisible (see \cite[Theorem 9.35]{HM}).
{\bf In this section we assume $T$ is simple} (except  Remark \ref{abel.interdef} and  
Example \ref{failure_reversibale_amalgam_rosy}), and the independence  is  nonforking independence. 
It is still an open question whether the strong type $p$ is necessarily a Lascar type. If so, then the
following theorem follows easily by the $3$-amalgamation of Lascar types in simple theories.  
But regardless of the answer to the question, $p$ has the following amalgamation property.
We {\em do not assume} here that a realization of  $p$ is algebraically closed. 

\begin{Theorem}[divisible amalgamation] \label{divamal} Let $p$ be a strong type in a simple theory. Let $a,b\models p$ and $a\indo b$. Then for each $n\geq 1$, there are independent $a=a_0,a_1,\ldots,a_n=b$ such that
$a_0a_1\equiv a_ia_{i+1}$ for every $i<n$. 
\end{Theorem}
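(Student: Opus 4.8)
The plan is to reduce the statement to the triviality of a homology class and then exploit the divisibility of the relativized Lascar group. First I would pass to the type $q$ of an algebraically closed set: let $a'=\acl(a)$, $b'=\acl(b)$, and $q=\stp(a')$. Since $a\indo b$ implies (after adjusting by an automorphism) $a'\indo b'$, and a chain $a'_0,\dots,a'_n$ with $a'_0a'_1\equiv a'_ia'_{i+1}$ restricts to a chain for $a,b$, it suffices to produce such a chain for $q$. So from now on I work with $q$ the strong type of an algebraically closed set, to which Facts \ref{H1properties}, \ref{summary_H1_representation} and Remark \ref{modelinterdef} apply.

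Next I would translate ``there is an independent chain $a'=d_0,\dots,d_n=b'$ with $d_0d_1\equiv d_id_{i+1}$'' into a statement in the group $\CE^*\cong H_1(q)$. Observe that $[d_i,d_{i+1}]$ depends only on the pair $(d_i,d_{i+1})$ up to automorphism, so if all the pairs $(d_i,d_{i+1})$ are $\equiv$-equivalent, all the classes $[d_i,d_{i+1}]$ are the same element, say $g\in H_1(q)$, and telescoping via the first bullet of Fact \ref{summary_H1_representation} gives $[d_0,d_n]=n\cdot g$. So an $n$-step equal-step independent chain from $a'$ to $b'$ exists precisely when $[a',b']$ is divisible by $n$ in $H_1(q)$ — modulo the point that I must be able to realize any candidate $g$ by an actual independent pair. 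For that I would use the description of $\CE^*$: every element of $H_1(q)$ is $[c',c'']$ for some endpoint pair $(c',c'')$ of a $1$-shell, and by the independence theorem / extension I can arrange $(c',c'')$ with $c'\indo c''$; more carefully, I want to build the chain inductively, at stage $i$ choosing $d_{i+1}\models q$ with $d_id_{i+1}\equiv d_0d_1$ and $d_{i+1}\indo d_i$, which is possible by extension and invariance once $[d_0,d_1]$ has been chosen to be an $n$-th root of $[a',b']$ realized by an independent pair.

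So the crux is: given $h:=[a',b']\in H_1(q)$ and $n\geq 1$, find $g\in H_1(q)$ with $ng=h$ that is represented by an independent pair $(d_0,d_1)$, and then show the inductive construction closes up at $d_n=b'$ (i.e. $d_n\equiv^L b'$, using Fact \ref{H1properties}(1) that $[d_0,d_n]=ng=h=[a',b']$ forces $d_n$ and $b'$ to be in the same orbit, hence — after composing with an automorphism fixing the built data appropriately, as in the interdefinability argument of Remark \ref{modelinterdef} — we may actually take $d_n=b'$). Divisibility of $\gall^1(q)$ (a compact connected group, since $T$ is simple, hence $G$-compact) combined with Fact \ref{rel.lascargp} — which identifies $H_1(q)$ with the abelianization of $\gall^1(q)$ — gives that $H_1(q)$ is divisible, producing the needed $g$. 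The main obstacle I anticipate is the ``closing up'' step: arranging the constructed chain to land exactly on $b'$ rather than merely on some $b''\equiv^L b'$ with $b''\indo d_{n-1}$, and simultaneously keeping the last pair $(d_{n-1},d_n)$ $\equiv$-equivalent to $(d_0,d_1)$ while $d_n\equiv^L b'$. I expect this to be handled by first realizing $b''$ as above, then transporting by an element of $\autf(\CM)$ sending $b''$ to $b'$ and checking (as in Remark \ref{modelinterdef}(1), using that Lascar-equivalence classes can be slid along) that the independence and the equal-type conditions are preserved; the finite-character and extension axioms, plus invariance of $H_1$ under $\aut(\CM)$ from Fact \ref{summary_H1_representation}, should make this routine but it is the step requiring the most care.
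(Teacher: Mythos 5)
Your overall strategy --- use divisibility of a compact connected group attached to $p$ to extract an $n$-th root and convert it into an equal-step independent chain --- is the right one, and it is also the paper's. But you divide in the wrong quotient, and this produces a genuine gap at exactly the closing-up step you flagged as delicate. You take the $n$-th root $g$ of $[a',b']$ in $H_1(q)\cong\CE^*$ and build a chain $a'=d_0,\dots,d_n$ with $[d_0,d_n]=ng=[a',b']$. From this you only get $[d_n,b']=0$ in $H_1(q)$, which by Fact \ref{H1properties}(1) means that $d_n$ and $b'$ lie in the same orbit of the commutator subgroup of $\aut(q)$ --- not that $d_n\equiv^L b'$, and not that $d_n\equiv_{a'}b'$. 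Indeed, Fact \ref{H1properties}(2) says that ``$[x,y]=0$ iff $x\equiv^L y$'' is \emph{equivalent} to $\gall^1(q)$ being abelian, which is not assumed; and Remark \ref{modelinterdef}(1) / Remark \ref{abel.interdef}, which you invoke to slide the endpoint onto $b'$, require either that $q$ be the type of a model or that $\gall^1(q)$ be abelian. Without $d_n\equiv^L b'$ you cannot apply $3$-amalgamation of Lascar strong types to replace $d_n$ by a realization of $\tp(b'/a')$, and without $d_n\equiv_{a'}b'$ you cannot transport the chain by an automorphism fixing $a'$. So the final step fails as written, and the loss is irreparable within $H_1$: that group only remembers the commutator-orbit of the endpoint, while the theorem needs its Lascar class.

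The repair --- and it is what the paper does --- is to take the $n$-th root one level up, in $\gall^1(p)$ itself rather than in its abelian quotient. Choose $f\in\aut(p)$ with $f(a)=b$; by divisibility of the compact connected group $\gall^1(p)$ pick $h$ with $[h]^n=[f]$, and build the chain from suitably adjusted images $c_m=h_m(a)$ where $[h_m]=[g]^m$ for $g$ a representative with $[g]=[h]$ and $g(a)$ independent as needed. Then the endpoint satisfies $[h_n]=[f]$ in $\gall^1(p)$, hence $c_n\equiv^L f(a)=b$ on the nose, and $3$-amalgamation closes the chain up at $b$. This also makes your preliminary reduction to $q=\stp(\acl(a))$ unnecessary: the paper explicitly does not assume realizations of $p$ are algebraically closed, and none of the $\CE^*$/$H_1$ machinery (developed only for strong types of algebraically closed sets) is needed. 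Your telescoping identity and your realization of a class by an independent pair are fine as far as they go; the missing ingredient is keeping track of the Lascar class of the endpoint, which only $\gall^1(p)$, not $H_1$, retains.
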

\begin{proof}
Clearly we can assume $n>1$.  
Note that there is $f \in \aut(p)$ such that $b=f(a)$. Then since $G:=\gall^1(p)$ is divisible, there is $h \in \aut(p)$ such that
$[h]^n=[f]$ (in $G$). Put $c=h(a)$. 
Now there is $c_1\equiv^L c$ such that $c_1 \indo ab$. Then there is $h' \in \autf^1(p)$
such that $h'(c)=c_1$. Let $g=h'\circ h$. Then $[g]=[h]$ so $[g]^n=[f]$ too, and $g(a)=c_1$. 

\medskip

\noindent{\em Claim.} We can find additional elements $c_2,\ldots,c_n$ such that $\{a=c_0,c_1,\ldots,c_n,b\}$ is independent, and for each $1\leq m\leq n$,  $c_0c_1\equiv c_{m-1}c_{m}$  and there is $h_m\in\aut(p)$ such that 
$[h_m]=[g]^m$ in $G$, and $h_m(a)=c_m$:   
For an induction hypothesis, assume 
for $1\leq m<n$ we have found
$a=c_0,c_1,\ldots, c_m$ such that $\{c_0,c_1,\ldots,c_m,b\}$ is independent and $c_0c_1\equiv c_{i-1}c_{i}$ for all $1\leq i\leq m$ and there is $h_m\in\aut(p)$ such that 
$[h_m]=[g]^m$ in $G$, and $h_m(c_0)=c_m$.  

Notice now that then $c'_m:=g^m(a)\equiv^L h_m(a)=c_m$.
Put $c'_{m+1}:=g(c'_m)=g^{m+1}(a)$. 
Then there is $c_{m+1}$ such that $c_mc_{m+1}\equiv^L c'_mc'_{m+1}$, and $\{c_0,c_1,\ldots,c_{m+1},b\}$ is independent.
Since $c_{m+1}\equiv^L c'_{m+1}$,  there is $h'' \in \autf^1(p)$, such that
$h''(c'_{m+1})=c_{m+1}$ and so for $h_{m+1}:=h''\circ g^{m+1}$, we have $h_{m+1}(a)=c_{m+1}$ and  $[g]^{m+1}= [h_{m+1}]$ in $G$.
Moreover the equality of types $c_mc_{m+1}\equiv c'_mc'_{m+1} \equiv c_0c_1$ is witnessed by $g^m$. Hence the claim is proved.

\medskip

Notice that  $ c_n=h_n(a)$ with $[h_n]=[g]^n=[f]$. Hence $c_n\equiv^L b=f(a)$.
Then, by $3$-amalgamation, we find $b' \models \tp(b/a) \cup \tp(c_n/c_1\dots c_{n-1})$
with $b'\indo c_0\ldots c_{n-1}$. Then the automorphic images of
$c_1\dots c_{n-1}$ (rename them as $a_1\ldots a_{n-1}$) under a map sending $b'$ to $b$ over $a$ satisfy
the conditions of the theorem.
\end{proof}

Now we assume that $p$ is the type of an algebraically closed tuple. For $a,a'\models p$, we have a better description of $[a,a']=0\in H_1(p).$

\begin{proposition}\label{indep.commu.} For $a,a'\models p$, the following are equivalent.
\be\item
$[a,a']=0\in H_1(p)$, equivalently there is $h$ in the commutator subgroup of $\aut(p)$ such that $h(a)=a'$. 
\item
There are $b,c,d\models p$ such that each of $\{a,b,c,d\}$, $\{a', b,c,d\}$ is independent, and 
$ab\equiv  cd$,  $bd\equiv a'c$.
\ee
\end{proposition}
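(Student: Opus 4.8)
The plan is to run the equivalence by unwinding Remark \ref{modelinterdef}(2) together with Fact \ref{H1properties}(1)(c) in the strengthened form recorded at the end of Remark \ref{modelinterdef}(2): since $T$ is simple, it is $G$-compact, so $[a,a']=0\in H_1(p)$ is equivalent to the existence of $h$ in the commutator subgroup of $\aut(p)$ with $h(a)=a'$, and moreover (by Got\^o's theorem) $h$ may be taken to be a single commutator. By (**) in Remark \ref{modelinterdef}(2), this says precisely that there are $d^1,d^2,d^3\models p$ with $ad^1\equiv d^3d^2$ and $a'd^3\equiv d^1d^2$. So the content of the proposition is to upgrade these four realizations $a,d^1,d^2,d^3$ (and $a'$) to a \emph{mutually independent} configuration, after which one relabels $b:=d^1$, $c:=d^3$, $d:=d^2$ and checks that $ab\equiv cd$ and $bd\equiv a'c$ are exactly the two type-equalities above (up to the harmless reordering inside each pair, which I'd make match by being careful about which of $d^1,d^3$ plays which role).

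For the forward direction, I would start from a witnessing single commutator $f_1 f_2 f_1^{-1}f_2^{-1}$ with $a' = f_1 f_2 f_1^{-1}f_2^{-1}(a)$ and set $a_1 := f_2^{-1}(a)$, $a_2 := f_1 a_1$, and so on, so that the chain of realizations with the required type-equalities appears; alternatively just take $d^1,d^2,d^3$ from (**) directly. Then I'd use the extension axiom for nonforking independence repeatedly to move to independent copies: pick $d^2{}' \indo a a'$ with $d^2{}'\equiv d^2$ using the appropriate conjugating automorphism to transport the type-equalities, then pick $d^3{}'\indo a a' d^2{}'$ extending the relevant type over the pieces it must agree with, and finally $d^1{}'$ independent from the rest while preserving $ad^1{}'\equiv d^3{}'d^2{}'$. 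The point is that each type-equality in (**) only constrains a realization over finitely many of the others, so there is always room to realize the nonforking extension; this is the standard ``make an amalgamation diagram independent'' manoeuvre, and simplicity (stationarity over algebraically closed sets, which we have since $p$ is the type of an algebraically closed tuple, plus the Independence theorem if needed) is what makes it go through without disturbing the prescribed types.

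For the converse, suppose $b,c,d\models p$ with $\{a,b,c,d\}$ and $\{a',b,c,d\}$ independent and $ab\equiv cd$, $bd\equiv a'c$. Rewriting with $d^1=b$, $d^3=c$, $d^2=d$, these are literally the conditions $ad^1\equiv d^3d^2$ and $a'd^3\equiv d^1d^2$ of (**), so by Remark \ref{modelinterdef}(2) there is a commutator $h\in\aut(p)$ with $h(a)=a'$, hence $[a,a']=0\in H_1(p)$ by Fact \ref{H1properties}(1)(c). Note the independence hypotheses are not even needed for this direction — (**) alone suffices — so the converse is immediate.

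The main obstacle I expect is the bookkeeping in the forward direction: ensuring that when one replaces $d^1,d^2,d^3$ by nonforking-independent copies, \emph{both} type-equalities $ab\equiv cd$ and $bd\equiv a'c$ survive \emph{simultaneously}, and that the single copy of $a$ works against both equalities at once (and likewise $a'$, which enters only through the second equality). One has to choose the order of extension carefully — realize $d^2$ first (it occurs in both equalities), then $d^1$ and $d^3$ which occur in one each — and at each step transport the already-established type-equalities by composing with the automorphism that witnesses the new realization's type; I'd want to double-check that no equality forces a realization to be dependent on the others, which is exactly where one uses that in a $1$-shell the relevant algebraically closed sets are amalgamation bases. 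This is routine but is the only place where real care is required.
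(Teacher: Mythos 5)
Your reduction of the statement to the configuration (**) of Remark \ref{modelinterdef}(2) (via Got\^o's theorem and $G$-compactness of simple theories) is exactly the paper's starting point, and your converse direction is correct and identical to the paper's: the independence hypotheses are indeed not needed there. However, the forward direction has a genuine gap: the ``routine bookkeeping'' you defer is the entire content of the proposition, and the mechanism you sketch for it would not work. First, a factual error in your plan: writing $b=d^1$, $c=d^3$, $d=d^2$, \emph{all three} of $d^1,d^2,d^3$ occur in \emph{both} type-equalities $ad^1\equiv d^3d^2$ and $a'd^3\equiv d^1d^2$ (only $a$ and $a'$ are confined to one each), so your proposed ordering ``realize $d^2$ first, then $d^1$ and $d^3$ which occur in one each'' rests on a false premise and there is no element that can be moved last while constrained by only one equality. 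Second, the extension axiom only produces a copy with the same type \emph{over the base of the extension}; it cannot make an element independent from a set while preserving its type over that same set, and conjugating by an automorphism to ``transport the type-equalities'' would move the fixed elements $a$ and $a'$ as well. Your appeal to ``stationarity over algebraically closed sets'' is also not available: simple theories do not have stationarity, and the Independence Theorem (for Lascar strong types) is not optional here --- it is the engine of the whole argument.

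Concretely, the paper's proof of (1)$\Rightarrow$(2) applies $3$-amalgamation three separate times, each time amalgamating two types of an element over independent bases after verifying the element's Lascar type is the same on both sides (e.g.\ $d_0\models\tp(d'/b)\cup\tp(d''/b')$ with $d''\equiv^L d'\equiv^L d_0$; later $d\models\tp(d_0/aa'b)\cup\tp(d_1/c)$ with $d_0\equiv^L d_1$), interleaved with extension steps and with the introduction of auxiliary elements $b, d_0, d_1, c, a''$. The two target equalities are then recovered only at the very end through chains such as $ab\equiv c'd_1\equiv cd_1\equiv cd$ and $bd\equiv bd_0\equiv bd'\equiv b'd''\equiv b'd_0\equiv a'c$. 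To repair your proposal you would need to replace the ``extension plus conjugation'' step with this kind of iterated $3$-amalgamation and track the Lascar types of the intermediate witnesses explicitly.
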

\begin{proof}
(1)$\Rightarrow$(2) Since $T$ is $G$-compact, by  Remark \ref{modelinterdef}(2) there are 
$b',c',d'\models p$ such that 
$ab'\equiv c'd'$ and   $b'd'\equiv a'c'$.  Now there is $b\equiv^L b'$ such that
$b\indo aa'b'c'd'$. Hence, by $3$-amalgamation, there 
is $d_0\equiv^L d'$ such that $d_0\models \tp(d'/b) \cup \tp(d''/b')$ and $d_0\indo bb'$,
 where $d''b'\equiv d'b$ and $d''\equiv^L d'\equiv^L d_0$. By extension there is no
 harm in assuming that $d_0\indo bb'aa'c'd'$.
 
 Now since $a'c'\equiv b'd'$, there are $a''c$ such that $b'd'bd_0\equiv a'c'a''c$. Hence, 
 $c\equiv^L c'$, $a''\equiv^L a'$, and $c\indo a'c'$. Again by extension, we can further assume that $c\indo aa'c'bd_0$.
 Moreover, since $ab'\equiv c'd'$, there is $d_1\equiv^L d'$ such that $ab'b\equiv c'd'd_1$.
 Then since $c\indo c'$ with $c\equiv^Lc$, and $d_1\indo c'$, again by $3$-amalgamation we can assume that $c'd_1\equiv cd_1$ and $d_1\indo cc'$.
 
 Now the situation is that $aa'b\indo c$, $d_1\indo c$, and $d_0\indo aa'b$. Moreover, $d_0\equiv^L d'\equiv^L d_1$.
 Hence, by $3$-amalgamation,
 we have $d\models \tp(d_0/aa'b)\cup \tp(d_1/c)$ and $d\indo aa'bc$. 
 Therefore, each of  $\{a,b,c,d\}$, $\{a', b,c,d\}$ is independent.
 Moreover, due to above combinations  $$ab\equiv  c'd_1\equiv cd_1\equiv cd,$$
 and   
 $$bd\equiv bd_0\equiv bd'\equiv b'd''\equiv b'd_0\equiv a'c$$
  as wanted.

(2)$\Rightarrow$(1) Clear by Remark \ref{modelinterdef}(2).
\end{proof}

{\bf From now until Theorem \ref{lascar=rev.}, we assume that 
$p(x)$ is the (strong) type of a small model, and all tuples    $a,b,c,\dots$ realize  $p$}, so all are universes of  models.
 Hence, by Fact \ref{rel.lascargp}, $\autf^1(p)=\autf_{\fix}(p)$ and $(\gall(T)\cong )\gall^{\fix}(p)=\gall^{1}(p)$, which we simply 
write $\autf(p)$ and $\gall(p)$, respectively.  Moreover,  $H_1(p)\cong \gall(p)/(\gall(p))'$ and hence it is a compact  connected (so divisible) abelian group.


\begin{Definition}
Let $r(x,y), s(x,y)$ be types completing $p(x)\wedge p(y)\wedge x\indo y$. 
\be\item
We say $p$ has {\em abelian} (or {\em commutative}) {\em amalgamation of $r$ and $s$}, if 
there are independent $a,b,c,d\models p$ such that $ab,cd\models r$ and 
$ac, bd\models s$. 

We say $p$ has {\em abelian amalgamation} if so has it for any such completions $r$ and $s$.

\item

We say $p$ has {\em reversible amalgamation of $r$ and $s$} if 
there are independent $a,b,c,d\models p$ such that $ab,dc\models r$ and 
$ac, db\models s$. 

We say $p$ has {\em reversible amalgamation} if so has it for any such completions $r$ and $s$.

\ee
\end{Definition}

\begin{Lemma}\label{strongabelamal} The following are equivalent. 
\be
\item
The type $p$ has abelian amalgamation.

\item
Let $r(x,y), s(x,y)$ be any types completing $p(x)\wedge p(y)\wedge x\indo y$, and let
$a,b,c\models p$ be independent such that $ac\models s$ and $ab\models r$. Then there is 
$d\models p $ independent from $abc$ such that $cd\models r$ and $bd \models s$. 

\ee

\end{Lemma}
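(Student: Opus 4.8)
The plan is to prove the equivalence by unpacking the definition of abelian amalgamation and then using homogeneity of the monster model to move data around. First I would prove $(2)\Rightarrow(1)$, which should be the easy direction: given completions $r,s$ of $p(x)\wedge p(y)\wedge x\indo y$, pick $a\models p$ and then (by extension and existence of the relevant types) find $b,c\models p$ with $ab\models r$, $ac\models s$ and $\{a,b,c\}$ independent; apply (2) to obtain $d\models p$ independent from $abc$ with $cd\models r$ and $bd\models s$. Then $a,b,c,d$ are independent, $ab,cd\models r$ and $ac,bd\models s$, which is exactly abelian amalgamation of $r$ and $s$. Since $r,s$ were arbitrary, $p$ has abelian amalgamation.

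For $(1)\Rightarrow(2)$, suppose $p$ has abelian amalgamation and fix arbitrary completions $r,s$ and independent $a,b,c\models p$ with $ac\models s$, $ab\models r$. By (1) applied to the specific completions $r$ and $s$, there are independent $a',b',c',d'\models p$ with $a'b',c'd'\models r$ and $a'c',b'd'\models s$. Now I would use the fact that $p$ is the type of a model, so (by Fact~\ref{rel.lascargp} and Remark~\ref{modelinterdef}(1)) equality of Lascar types behaves well and, more importantly here, any two independent triples realizing the same types over $\emptyset$ are conjugate: since $a'c'\equiv ac$ and we want to also match the $r$-type, note $a'b'\equiv ab$ as well. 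The key point is that $(a,b,c)$ and $(a',b',c')$ realize the same type over $\emptyset$ --- indeed $ab\models r$, $ac\models s$, $a'b'\models r$, $a'c'\models s$, and all three-element sets are independent, so $\tp(abc)$ is determined by $r,s$ together with independence (this is where I need the amalgamation/stationarity-type bookkeeping for types of models, i.e. that $\tp(bc/a)$ is the nonforking amalgam). Hence there is $\sigma\in\aut(\CM)$ with $\sigma(a'b'c')=abc$; set $d:=\sigma(d')$. Then $d\models p$, $\{a,b,c,d\}$ is independent, $cd=\sigma(c'd')\models r$ and $bd=\sigma(b'd')\models s$, as required.

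The main obstacle I anticipate is the claim that $\tp(abc)$ (for independent $a,b,c$ with $ab\models r$, $ac\models s$) is uniquely determined --- i.e. that $(a,b,c)$ and $(a',b',c')$ above are automorphic. This is not literally true for arbitrary simple theories over $\emptyset$ (that is precisely the role of the Independence Theorem over models, and $p$ being a \emph{strong} type of a model is what should rescue it via Fact~\ref{rel.lascargp}/Remark~\ref{modelinterdef}). So the careful version of the argument will likely go through Lascar strong types: first arrange $a\equiv^L a'$ (possible after an automorphism and using that $\gall^1(p)\cong\gall(T)$ here), then use $3$-amalgamation over the model $a$ to amalgamate $\tp(b/a)$ and $\tp(c/a)$ coherently with the $r,s,\indo$ constraints, matching $\tp(bc/a)$ with $\tp(b'c'/a')$, and then conjugate. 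I would present the proof in that order: (2)$\Rightarrow$(1) directly, then (1)$\Rightarrow$(2) via: extract $a',b',c',d'$ from abelian amalgamation of $r$ and $s$; reduce to $a\equiv^L a'$; use $3$-amalgamation over $a$ to get the triples conjugate; transport $d'$. The routine verifications (independence of $\{a,b,c,d\}$, that $cd\models r$, $bd\models s$) I would leave to the reader.
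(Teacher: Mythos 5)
Your direction $(2)\Rightarrow(1)$ is fine (the paper dismisses it as clear), but the central step of your $(1)\Rightarrow(2)$ has a genuine gap. You want an automorphism $\sigma$ with $\sigma(a'b'c')=abc$, which requires $\tp(abc)=\tp(a'b'c')$. This fails in general: in a simple unstable theory the nonforking amalgam is not unique, so knowing $ab\models r$, $ac\models s$ and that $\{a,b,c\}$ is independent does not determine $\tp(bc)$ --- the completion of $p(x)\wedge p(y)\wedge x\indo y$ realized by $(b,c)$ is a free parameter. The triple $(a,b,c)$ in statement $(2)$ is arbitrary, while $\tp(b'c'/a')$ is whatever the abelian-amalgamation witness happens to produce; nothing forces these to agree. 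Stationarity of types over models is a stable phenomenon, not a simple one (already in the random graph, two independent realizations of given types over a model can sit in non-conjugate positions relative to each other), so the ``bookkeeping'' you invoke is unavailable. Your sketched repair inherits the same defect: ``matching $\tp(bc/a)$ with $\tp(b'c'/a')$'' is precisely the thing that cannot be done, since both types are already fixed before any amalgamation takes place.

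The paper's proof never moves $b$ or $c$. It conjugates the abelian-amalgamation witness only along the pair realizing $s$, so one may assume the witness is $a,b_0,c,d_0$ with $ab_0,cd_0\models r$ and $ac,b_0d_0\models s$; then, since $ab_0\equiv ab$ (both realize $r$), it pushes $d_0$ forward along an automorphism over $a$ sending $b_0$ to $b$, obtaining $d_1$ with $abd_1\equiv ab_0d_0$; finally it applies $3$-amalgamation over the \emph{model} $a$ to the two candidate types of the fourth point, $\tp(d_0/ac)$ and $\tp(d_1/ab)$, which are compatible because $d_0\equiv_a d_1$ and types over the model $a$ are Lascar types. The resulting $d$ satisfies $cd\equiv cd_0\models r$ and $bd\equiv bd_1\equiv b_0d_0\models s$. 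In short, the object to be amalgamated is $d$, not the triple $(a,b,c)$; once the independence theorem is aimed at $d$, your outline becomes the paper's argument.
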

\begin{proof}
(1)$\Rightarrow$(2) By (1), there are $b_0, d_0\models p$ such that 
$\{a,c,b_0,d_0\}$ is independent, $ab_0,cd_0\models r$ and $ac, b_0d_0\models s$.
Hence, there is $d_1$ such that $abd_1\equiv ab_0d_0$.
Now by $3$-amalgamation over the model $a$, there is
$$d\models \tp(d_0/a;c) \cup \tp(d_1/a;b),$$
such that $\{a,b,c,d\}$ is independent. 
Moreover, $cd\equiv cd_0\models r$ and $bd\equiv bd_1\equiv b_0d_0 \models s$,  as desired.

(2)$\Rightarrow$(1) Clear.
\end{proof}

By the same proof we obtain the following too.

\begin{Lemma}\label{strongrevamal} The following are equivalent. 
\be
\item
The type $p$ has reversible amalgamation.

\item
Let $r(x,y), s(x,y)$ be any types completing $p(x)\wedge p(y)\wedge x\indo y$, and let
$a,b,c\models p$ be independent such that $ac\models s$ and $ab\models r$. Then there is 
$d\models p $ independent from $abc$ such that $dc\models r$ and $db \models s$. 

\ee

\end{Lemma}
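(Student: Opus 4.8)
The plan is to mimic the proof of Lemma~\ref{strongabelamal} almost verbatim, adjusting only for the transposed coordinates in the second pair of conditions; the substantive input is the independence theorem over the model $a$, and the only care needed is tracking ordered pairs, since $dc\models r$ is a different condition from $cd\models r$.

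For (2)$\Rightarrow$(1), fix completions $r(x,y),s(x,y)$ of $p(x)\wedge p(y)\wedge x\indo y$. Realize $r$ by a pair $ab$; since $s\restriction x=p=\tp(a)$, there is $c_1$ with $ac_1\models s$, and by the extension axiom there is $c\equiv_a c_1$ with $c\indo_a ab$, so $a,b,c\models p$ are independent with $ab\models r$ and $ac\models s$. Applying (2) gives $d\models p$ independent from $abc$ with $dc\models r$ and $db\models s$; then $\{a,b,c,d\}$ witnesses reversible amalgamation of $r$ and $s$. As $r,s$ were arbitrary, $p$ has reversible amalgamation.

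For (1)$\Rightarrow$(2), let $r,s$ and independent $a,b,c\models p$ with $ac\models s$ and $ab\models r$ be given, and apply reversible amalgamation of $r$ and $s$ to obtain independent $a',b',c',d'\models p$ with $a'b',d'c'\models r$ and $a'c',d'b'\models s$. Since $s$ is complete and both $ac$ and $a'c'$ realize it, an automorphism lets us assume $a'=a$ and $c'=c$; put $b_0:=b'$, $d_0:=d'$, so $\{a,c,b_0,d_0\}$ is independent with $ab_0\models r$, $d_0c\models r$, $ac\models s$, $d_0b_0\models s$. Since $r$ is complete and $ab\equiv ab_0$, there is $d_1$ with $abd_1\equiv ab_0d_0$; hence $ad_1\equiv ad_0$, $bd_1\equiv b_0d_0\models s$, and, using $d_0\indo_a b_0$, also $d_1\indo_a b$. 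Now $a$ is (the universe of) a model, $b\indo_a c$, $d_1\equiv_a d_0$, $d_1\indo_a b$, and $d_0\indo_a c$, so the independence theorem over $a$ yields $d\models\tp(d_0/a;c)\cup\tp(d_1/a;b)$ with $d\indo_a bc$. Then $\{a,b,c,d\}$ is independent, $dc\equiv d_0c\models r$, and $db\equiv d_1b\equiv d_0b_0\models s$, as required.

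The only point demanding vigilance is the ordering of variables: in the target of (2) we want $d$ in the \emph{first} coordinate of both $r$ and $s$, so the renaming reducing to $a'=a$, $c'=c$ must align $a$ with the first coordinate of each of $r$ and $s$. Once that matching is fixed, checking the hypotheses of the independence theorem over the model $a$ is routine and runs exactly as in Lemma~\ref{strongabelamal}, so no genuine additional obstacle arises.
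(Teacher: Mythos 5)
Your proof is correct and is essentially the paper's own: the paper simply states that Lemma~\ref{strongrevamal} follows ``by the same proof'' as Lemma~\ref{strongabelamal}, and your argument is exactly that adaptation (reduce the given $a,c$ to the first and third points of an abstract reversible-amalgamation quadruple via an automorphism, transfer $d_0$ to $d_1$ over $ab$, and apply the independence theorem over the model $a$). The only blemish is the intermediate assertion ``$b_0d_0\models s$,'' which should read $d_0b_0\models s$; your final chain $db\equiv d_1b\equiv d_0b_0\models s$ is stated correctly, so nothing is affected.
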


\begin{theorem}\label{abeliangal(p)} The following are equivalent. 
\be
\item
$ \gall(p)(\cong\gall(T))$ is abelian.

\item

$p$ has abelian amalgamation.
\ee
\end{theorem}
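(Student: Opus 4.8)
The plan is to route both implications through the set $L$ of Lascar types of realizations of $p$, equipping it with one permutation for each amalgamation type. \emph{Preliminary observation.} Fix a type $r(x,y)$ completing $p(x)\wedge p(y)\wedge x\indo y$. For any $a\models p$ the partial type $r(a,y)$ is consistent (its restriction to $x$ is $\tp(a)=p$), so $a$ has a partner $b$ with $ab\models r$; and by Remark~\ref{modelinterdef}(1), applied with the model $a$ (namely $a\equiv^L a$ and $ab\equiv ab'$ force $b\equiv^L b'$), the Lascar type of such a $b$ depends only on that of $a$. Hence we get a well-defined map $\rho_r\colon L\to L$ with $\rho_r(\ltp(a))=\ltp(b)$ whenever $ab\models r$; applying the same to $r^{-1}(x,y):=r(y,x)$ shows $\rho_r$ is a bijection of $L$ with inverse $\rho_{r^{-1}}$. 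The only facts used below are: $\rho_r$ is a bijection of $L$, and $ab\models r$ implies $\rho_r(\ltp(a))=\ltp(b)$.

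\emph{Proof of $(1)\Rightarrow(2)$.} By Lemma~\ref{strongabelamal} it suffices, given independent $a,b,c\models p$ with $ab\models r$ and $ac\models s$, to produce $d\models p$ independent from $abc$ with $cd\models r$ and $bd\models s$. Pick $\rho,\sigma\in\aut(\CM)$ with $\rho(a)=b$ and $\sigma(a)=c$. Applying $\sigma$ to $ab\models r$ gives $c\,\sigma(b)\models r$, and applying $\rho$ to $ac\models s$ gives $b\,\rho(c)\models s$. Since $\gall(T)$ is abelian, $\sigma\rho\sigma^{-1}\rho^{-1}\in\autf(\CM)$, and this automorphism sends $(\rho\sigma)(a)=\rho(c)$ to $(\sigma\rho)(a)=\sigma(b)$, so $\sigma(b)\equiv^L\rho(c)$. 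By extension, choose $d_1\equiv_c\sigma(b)$ with $d_1\indo_c abc$ and $d_2\equiv_b\rho(c)$ with $d_2\indo_b abc$; then $cd_1\models r$, $bd_2\models s$, each of $d_1,d_2$ is independent from $abc$ over $\emptyset$ (combine the independence built into $r$, $s$ with the chosen relative independence, via transitivity), and since $b,c$ are models $d_1\equiv^L\sigma(b)\equiv^L\rho(c)\equiv^L d_2$. Now $3$-amalgamation over $\acl(\emptyset)$, applied to $d_1,d_2$ over the independent pair of parameter sets $\{c\}$ and $\{a,b\}$, yields $d$ with $d\indo_\emptyset abc$, $dc\equiv^L d_1c$ and $d\,ab\equiv^L d_2\,ab$; then $\{a,b,c,d\}$ is independent, $cd\equiv cd_1\models r$, and $bd\equiv bd_2\models s$, as required.

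\emph{Proof of $(2)\Rightarrow(1)$.} By Fact~\ref{H1properties}(2) (its reverse implication being automatic) it suffices to show that $[a,a']=0\in H_1(p)$ forces $a\equiv^L a'$. Given $[a,a']=0$, Proposition~\ref{indep.commu.} provides $b,c,d\models p$ with $\{a,b,c,d\}$ and $\{a',b,c,d\}$ independent, $ab\equiv cd$ and $bd\equiv a'c$. Put $r:=\tp(ab)=\tp(cd)$ and $s:=\tp(bd)=\tp(a'c)$; both complete $p(x)\wedge p(y)\wedge x\indo y$. Reading off the permutations: from $ab\models r$ and $bd\models s$, $(\rho_s\circ\rho_r)(\ltp(a))=\ltp(d)$; from $a'c\models s$ and $cd\models r$, $(\rho_r\circ\rho_s)(\ltp(a'))=\ltp(d)$; hence $(\rho_s\rho_r)(\ltp(a))=(\rho_r\rho_s)(\ltp(a'))$. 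On the other hand, abelian amalgamation of $r$ and $s$ provides an independent witnessing quadruple, and — since any two realizations of $r$ are conjugate — we may transport it so that its first edge is $ab$: there are $c',d'$ with $\{a,b,c',d'\}$ independent, $ab,c'd'\models r$ and $ac',bd'\models s$. Chasing $d'$ through the two sides of this commuting square gives $(\rho_s\rho_r)(\ltp(a))=\ltp(d')=(\rho_r\rho_s)(\ltp(a))$. Combining the two displays, $(\rho_r\rho_s)(\ltp(a))=(\rho_r\rho_s)(\ltp(a'))$, and since $\rho_r\rho_s$ is a bijection of $L$ we conclude $\ltp(a)=\ltp(a')$, i.e.\ $a\equiv^L a'$.

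The step I expect to be the main obstacle is the use of abelian amalgamation in $(2)\Rightarrow(1)$: the hypothesis only gives \emph{some} independent square for the pair $(r,s)$, while the argument needs one anchored at the specific realization $a$ at hand; this is exactly what the conjugacy/homogeneity remark repairs (all realizations of $r$ are automorphic, so a square can be moved to begin with any prescribed $ab\models r$). A more routine but still fiddly point is the independence bookkeeping in $(1)\Rightarrow(2)$ needed to place the two partial solutions $d_1,d_2$ in position for $3$-amalgamation over $\acl(\emptyset)$ against a genuinely $\indo$-independent pair of parameter sets.
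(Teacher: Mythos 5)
Your proposal is correct. The direction $(1)\Rightarrow(2)$ is essentially the paper's argument in a slightly different anchoring: the paper builds an unanchored quintuple $a,a',b,c,d$, uses Remark~\ref{modelinterdef}(2) to realize $a\mapsto a'$ by a commutator (hence $a\equiv^L a'$ when $\gall(p)$ is abelian), and amalgamates the two $a$-candidates; you instead fix $(a,b,c)$ as in Lemma~\ref{strongabelamal}(2), manufacture the two $d$-candidates $\sigma(b),\rho(c)$ by explicit automorphisms, observe that the commutator $\sigma\rho\sigma^{-1}\rho^{-1}$ identifies their Lascar types, and amalgamate the $d$-candidates -- same mechanism, same use of the independence theorem. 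The direction $(2)\Rightarrow(1)$ is where you genuinely diverge. Both proofs enter through Fact~\ref{H1properties}(2) and Proposition~\ref{indep.commu.}, but the paper then runs an element chase (produce $c'$ via Lemma~\ref{strongabelamal} and extension, deduce $c\equiv^L c'$ because $d$ is a model, pass to $a''$, conclude using that $c'$ is a model), whereas you package Remark~\ref{modelinterdef}(1) into well-defined permutations $\rho_r$ of the set $L$ of Lascar classes and reduce everything to the identity $(\rho_s\rho_r)(\ltp a)=(\rho_r\rho_s)(\ltp a)$ supplied by an amalgamation square transported to begin at $ab$, followed by injectivity of $\rho_r\rho_s$. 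Your version is cleaner and more conceptual: it makes visible that abelian amalgamation is literally the statement that the permutations $\rho_r$ of $L$ commute, which is the honest content behind the equivalence with abelianness of $\gall(p)$ (compare the closing comment of Remark~\ref{abel.interdef}, that $\gall^1(p)$ acts on Lascar classes); the cost is the preliminary well-definedness and bijectivity checks for $\rho_r$, which you correctly ground in Remark~\ref{modelinterdef}(1) and the standing hypothesis that realizations of $p$ are models. The paper's chase is shorter on the page but obscures this structure. Your flagged worry about anchoring the amalgamation square at the given edge $ab$ is real but correctly resolved by homogeneity, exactly as you say.
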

\begin{proof}
(1)$\Rightarrow$(2) Assume (1). Thus for $G:=\aut(p)$, we have 
 $G'\leq \autf(p)$ \ \ (*). Now let $r(x,y)$ and $s(x,y)$ be some complete types containing 
 $p(x)\wedge p(y)\wedge x\indo y$. There are independent $a,a', b,d,c\models p$ such that
 $ab,cd\models r$ and $bd, a'c\models s$. Hence there is a commutator $f$ in $G'$ such that
 $f(a)=a'$. Hence by (*), $a\equiv^L a'$. 
Then by $3$-amalgamation of $\Lstp(a)$, there is 
$$a_0\models \tp(a/bd)\cup  \tp(a'/c)$$
such that $\{a_0,b,c,d\}$ independent; $a_0b\equiv ab\models r$; and $a_0c\equiv a'c\models s$. 
Therefore $p$ has commutative amalgamation.

(2)$\Rightarrow$(1) 
To show (1), due to Fact \ref{H1properties}(2) it is enough to prove that  if  
$a,a'\models p$ and $[a,a']=0\in H_1(p)$ \ ($\dag$), then $a\equiv^L a'$. Now assume (2) and ($\dag$).
Thus, by Proposition \ref{indep.commu.}, there are $b,c,d\models p$ such that each of 
the sets $\{a,b,c,d\}$ and $\{a', b,c,d\}$ is independent, 
$ab\equiv  cd$ and  $bd\equiv a'c$. Now by Lemma \ref{strongabelamal} and extension there is $c'\models p$ such that $c'\indo aa'cd$ and 
$ac'\equiv bd\equiv a'c$ and $c'd\equiv ab\equiv cd$.  Then, since $d$ is a model, we have $c
\equiv^L c'$, and there is $a''$ such that $a'c\equiv_da''c'$, so $a\equiv^L a''$ and $a''\equiv_{c'}a$. Since
$c'$ is a model as well, we conclude that $a'\equiv^L a''\equiv^L a$. 
\end{proof}

A  proof similar to that of the above Theorem \ref{abeliangal(p)} (2)$\Rightarrow$(1) applied to Fact \ref{H1properties}(1)(b) gives the following  as well. 

\begin{Proposition}\label{revimpliesabel}
If $p$ has reversible amalgamation, then $\gall(p)$ is abelian (equivalently, $p$ has abelian amalgamation).
\end{Proposition}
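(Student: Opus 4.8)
The plan is to transcribe the proof of Theorem~\ref{abeliangal(p)}, part (2)$\Rightarrow$(1), replacing abelian amalgamation (Lemma~\ref{strongabelamal}) by reversible amalgamation (Lemma~\ref{strongrevamal}) and feeding in the balanced-chain-walk description of the vanishing of an $H_1$-class (Fact~\ref{H1properties}(1)(b)). By Fact~\ref{H1properties}(2) it suffices to prove that for $a,a'\models p$, if $[a,a']=0\in H_1(p)$ then $a\equiv^L a'$; so fix such $a,a'$. By Fact~\ref{H1properties}(1)(b) there is a balanced-chain-walk $a=d_0,d_1,\dots,d_{2n+2}=a'$ with a witnessing bijection $\sigma$ of $\{0,\dots,n\}$. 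Every $d_i$ realizes $p$, hence is (the universe of) a small model, so any equality $x\equiv_{d_i}y$ upgrades automatically to $x\equiv^L y$; this, exactly as in the closing lines of the proof of Theorem~\ref{abeliangal(p)}, is the only role models play.

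I would argue by induction on $n$, the base case $n=0$ being immediate (the single constraint is $d_0d_1\equiv d_2d_1$, hence $a\equiv_{d_1}a'$ and $a\equiv^L a'$). For $n\geq 1$: if $\sigma$ fixes the first or the last rung, one peels that rung off, splitting the walk into a short piece whose endpoints are Lascar-equivalent (using only that the relevant $d_i$ is a model) and a balanced-chain-walk of length $n-1$, to which induction applies. Otherwise $\sigma$ mixes the rungs, and this is where reversible amalgamation enters. The key point is that condition (iii) of Fact~\ref{H1properties}(1)(b) asserts that a rung $d_{2i}d_{2i+1}$ equals, \emph{with its order reversed}, a connecting pair $d_{2\sigma(i)+1}d_{2\sigma(i)+2}$; it is this built-in reversal that makes reversible, rather than merely abelian, amalgamation the relevant hypothesis. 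I would therefore take $r$ and $s$ to be the completions of $p(x)\wedge p(y)\wedge x\indo y$ read off from a suitably chosen matched rung and its flanking connectors, apply Lemma~\ref{strongrevamal} together with extension and repeated $3$-amalgamation over the models $d_i$ to produce a fresh realization $c'\models p$ --- independent from the relevant part of the walk --- realizing $r,s$ in the reversed order, and re-route the walk through $c'$. The bookkeeping is to be arranged so that the re-routed object is again a balanced-chain-walk from $a$ to $a'$, of the same length but with $\sigma$ replaced by a bijection closer to having a fixed first or last rung; after finitely many such surgeries one peels a rung off and drops $n$ by one. The ``equivalently, $p$ has abelian amalgamation'' clause is then just Theorem~\ref{abeliangal(p)}.

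The step I expect to be the main obstacle is precisely this last piece of bookkeeping: choosing the completions $r,s$ correctly, threading all the required independences over the appropriate models through $3$-amalgamation and extension so that the output of Lemma~\ref{strongrevamal} can actually be glued back into the walk, and verifying that what results is genuinely a balanced-chain-walk from $a$ to $a'$ whose associated bijection is strictly simpler. Everything else is a direct adaptation of the proof of Theorem~\ref{abeliangal(p)}, part (2)$\Rightarrow$(1).
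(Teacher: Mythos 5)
Your proposal is correct and follows essentially the same route as the paper: reduce via Fact \ref{H1properties}(2) and (1)(b) to an (independent) balanced chain-walk from $a$ to a Lascar-conjugate of $a'$, use Lemma \ref{strongrevamal}(2) as an adjacent-edge swap with reversal to sort the witnessing bijection to the identity, and then read off $a\equiv^L a'$ from the fact that the odd-indexed vertices are models. The paper phrases the sorting as ``iterate the swap until the walk has a Lascar pattern'' rather than as your induction on $n$ with rung-peeling, and it is equally brief about the bookkeeping you flag as the main obstacle.
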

\begin{proof} Assume that $p$ has reversible amalgamation, and   $[a,b]=0\in H_1(p)$  for  $a,b\models p$\ ($\dag$). As before it is enough to prove 
that $a\equiv^L b$. 
It follows from the extension axiom for Lascar types together with Fact \ref{H1properties}(1)(b), 
there are $b'\equiv^L b$ and a finite {\em independent} sequence $(d_i)_{0\le i\le 2n+2}$ of  realizations of $p$ satisfying the following conditions:
\be
	\item[(i)] $d_0=a$,  $d_{2n+2}=b'$; and 
	\item[(ii)] there is a bijection $$\sigma:\{0,1,\ldots,n\}\to\{0,1,\ldots,n\}$$ such that
	$d_{2i} d_{2i+1}\equiv d_{2\sigma(i)+2}d_{2\sigma(i)+1}$ for $ i \le n$.
\ee
In other words, there is an independent balanced chain-walk from $a$ to $b'$, and  it suffices to prove $a\equiv^L b'$.
Notice that reversible amalgamation implies  Lemma \ref{strongrevamal}(2) which exactly means that   in above chain-walk two adjacent edges can be swapped with sign 
reversed (i.e.,  for say $d_jd_{j+1}$ and $d_{j+1}d_{j+2}$ there is $d'_{j+1}(\indo d_jd_{j+2})$ such that $d_jd'_{j+1}\equiv d_{j+2}d_{j+1}$ and $d'_{j+1}d_{j+2}\equiv d_{j+1}d_{j}$). 
By iterating this process one can transfer the chain-walk to another balanced chain-walk $(d'_i)_{0\le i\le 2n+2}$ from $a$ to $b'$ such that the walk has 
a Lascar pattern, i.e., the bijection
$\sigma$ for the new walk is the identity map. Hence it follows $d'_{2i}\equiv_{d'_{2i+1}} d'_{2i+2}$ for $i\leq n$, and since each $d'_{2i+1}$ is a model,
we have $a=d'_0\equiv^Lb'$, as wanted.
\end{proof}

A stronger consequence is obtained.

\begin{Theorem}\label{lascar=rev.}
The strong type $p$ is a Lascar type iff $p$ has reversible amalgamation. 
\end{Theorem}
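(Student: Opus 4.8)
The plan is to prove both implications, using the machinery already set up. For the forward direction, suppose $p$ is a Lascar type. Then for any $a,b,c,d \models p$ we have $a \equiv^L b$, etc., so every pair over the empty set is Lascar-equivalent. Given completions $r,s$ of $p(x)\wedge p(y)\wedge x\indo y$, pick independent $a,b,c \models p$ with $ab \models r$ and $ac \models s$ (such a configuration exists by extension applied to realizations of $r$ and $s$, together with $3$-amalgamation over $\emptyset$, which is available since every pair is a Lascar pair and $T$ is simple). Then we want $d \indo abc$ with $dc \models r$ and $db \models s$. Since $ab \models r$ and everything is a Lascar type, the "flipped" data $dc \models r$, $db \models s$ is just asking for a realization of a type that is consistent by $3$-amalgamation of Lascar types in simple theories (amalgamating $\tp(d/c)$ copied from $\tp(a/b)$ appropriately, and $\tp(d/b)$ copied from $\tp(a/c)$, over the Lascar-type base). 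So $p$ has reversible amalgamation.

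For the harder direction, suppose $p$ has reversible amalgamation. By Proposition \ref{revimpliesabel}, $\gall(p)$ is abelian, so by Fact \ref{H1properties}(4) it suffices to show $H_1(p)$ is trivial, i.e., $[a,b] = 0 \in H_1(p)$ for all $a,b \models p$ — but wait, that is not automatic. Rather, I will show directly that $a \equiv^L b$ for all $a, b \models p$. Fix $a, b \models p$. First use extension to get $b' \equiv^L b$ with $b' \indo a$, so $ab'$ realizes some completion $r$ of $p(x)\wedge p(y)\wedge x \indo y$; since $a \equiv^L a$ trivially, it suffices to prove $a \equiv^L b'$, i.e., it suffices to handle the case $a \indo b$. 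Now take any completion $s$ of $p(x)\wedge p(y)\wedge x\indo y$ and any $c$ with $ac \models s$, $c \indo ab$; reversible amalgamation of $r$ and $s$ (in the Lemma \ref{strongrevamal}(2) form) gives $d \indo abc$ with $dc \models r$ (so $dc \equiv ab$) and $db \models s$ (so $db \equiv ac$). Since $c$ is a model and $dc \equiv ab$ with $b \indo c$... hmm, the bookkeeping here needs care: the key is that $dc \equiv ab$ means there is $f \in \aut(\CM)$ with $f(d) = a$, $f(c) = b$, and then $f(b) =: b''$ satisfies $b'' \equiv^L b$ (since $db \equiv ac$ and $c$ is a model forces $d \equiv^L a$, hence... ), and $ab'' \equiv db$... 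I expect the cleanest route is to mimic the last paragraph of the proof of Theorem \ref{abeliangal(p)}: produce from reversible amalgamation a configuration $\{a, b, c, d\}$ and $\{a', b, c, d\}$ (both independent) with edge-equalities that, because $b$, $c$, $d$ are all models, force the various points to be Lascar-equivalent by the "interdefinability of Lascar types of models" (Remark \ref{modelinterdef}(1)).

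The cleanest proof, which I would actually write, goes through Fact \ref{H1properties}(1)(b) exactly as in Proposition \ref{revimpliesabel}: from $[a,b]=0$ (which now must first be established — but in fact we do NOT know $[a,b]=0$ a priori, so this is where I need to be careful). Let me restate the real plan: by Fact \ref{H1properties}(4), $p$ is a Lascar type iff $H_1(p)$ trivial and $\gall(p)$ abelian; the latter holds by Proposition \ref{revimpliesabel}; so I must show $H_1(p) = 0$, i.e. $[a,b] = 0$ for all $a,b\models p$. Fix $a,b\models p$; pick $b_1 \equiv^L b$ with $b_1 \indo a$. Since $[b,b_1]=0$ (as $b \equiv^L b_1$, by Fact \ref{H1properties}(1)), it suffices to show $[a,b_1]=0$, and now $a \indo b_1$. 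Then $ab_1$ realizes some completion $r$, and I build a $1$-shell with endpoint pair $(a, b_1)$ whose class is $0$ by writing down an explicit balanced independent chain-walk from $a$ to $b_1$ using reversible amalgamation to flip edges — concretely, $a, b_1$ together with auxiliary $c, d$ produced by Lemma \ref{strongrevamal}(2) give a chain $a = d_0, d_1, d_2, d_3, d_4 = b_1$ with the pairing $\sigma$ swapping $0$ and $1$, hence $[a,b_1] = 0$ by Fact \ref{H1properties}(1)(b). The main obstacle is getting the chain-walk combinatorics and the independence bookkeeping exactly right so that reversible amalgamation applies at each flip; once the four-point reversible configuration is in hand, the conclusion $a \equiv^L b_1$ (and hence $a \equiv^L b$) follows as in Proposition \ref{revimpliesabel} from the fact that the odd-indexed vertices are models, so no genuinely new idea beyond that proposition is needed — the theorem is essentially the observation that reversible amalgamation lets one both abelianize \emph{and} kill $H_1$.
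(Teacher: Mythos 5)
Your reduction is the right one (forward direction via $3$-amalgamation of Lascar types; backward direction via Proposition \ref{revimpliesabel} plus Fact \ref{H1properties}(3), so that it remains to kill $H_1(p)$), but the way you propose to show $H_1(p)=0$ has a genuine gap. A single reversible-amalgamation configuration does \emph{not} produce a balanced chain-walk from $a$ to $b_1$. Concretely: given independent $a,b_1,c$ with $ab_1\models r$, $ac\models s$, Lemma \ref{strongrevamal}(2) gives $d$ with $dc\equiv ab_1$ and $db_1\equiv ac$; computing in $\CE^*\cong H_1(p)$ this yields $[a,b_1]=[a,c]+[c,b_1]=[d,b_1]+[c,b_1]$ and also $[a,b_1]=[d,c]=[d,b_1]-[c,b_1]$, so all you can extract is $2[c,b_1]=0$, not $[a,b_1]=0$. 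This is not an accident of bookkeeping: reversing an edge of a balanced chain-walk negates the corresponding summand of the homology class, so edge-flips can only ever prove identities of the form $[t]=-[t]$. If your five-point chain with the swapped pairing could always be built, you would have shown that reversible amalgamation alone forces $H_1(p)=0$ with no use of simplicity beyond the amalgamation statement itself, and the argument gives no way to rule out $H_1(p)$ being a nontrivial group of exponent $2$.

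The missing ingredient is divisibility. The paper's proof takes an arbitrary class $[s]=[a,b']$, replaces $b'$ by an independent Lascar-conjugate $b$ and adjoins $c\equiv^L b$ with $c\indo ab$ so that $[a,c]=[a,b]+[b,c]=[s]+0=[s]$; reversible amalgamation then gives $[s]=[a,b]+[b,c]=[c,b]+[b,a]=-[s]$, hence $2[s]=0$ for \emph{every} $[s]\in H_1(p)$. Only now does the compact-connected (hence divisible) structure of $H_1(p)$ enter: every $[s]$ equals $2[t]$ for some $[t]$, and $2[t]=0$, so $[s]=0$. You should replace your explicit chain-walk construction by this two-step argument ($2$-torsion from edge reversal, then divisibility); the rest of your outline then goes through.
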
  
\begin{proof}
($\Rightarrow$) It follows from $3$-amalgamation of Lstp. 

($\Leftarrow$) Assume $p$ has reversible amalgamation. Due to Fact \ref{H1properties}(3) and Proposition \ref{revimpliesabel}, it suffices to show $H_1(p)$ is trivial.
Let an arbitrary  $[s]\in H_1(p)$ be given, and let $[a,b']=[s]$ for $a,b'\models p$.  Then, for any $b\equiv^L b'$ with $b\indo ab'$, we have $[a,b]=[a,b']+[b',b]=[s]+0=[s]$.
  Similarly, for  $c\equiv^L b$ with  $c\indo ba$, we have  $[b,c]=0$ in $H_1(p)$.
Now reversible amalgamation (or Proposition \ref{strongrevamal}) says that $[s]=[a,b]+[b,c]=[c,b]+[b,a]=-[s]$. Hence $[s]+[s]=0$. Since $H_1(p)$ is compact and connected so divisible, any element in $H_1(p)$ is divisible by $2$. 
   Therefore, $H_1(p)=0$ by what we have just proved.
\end{proof}

\begin{Question} Can the same results hold if $p$  is the type of an
algebraically closed tuple (not necessarily a model) in a simple theory? 
The answer to this question is yes if  any two Lascar equivalence classes  in $p$  are interdefinable, since essentially this property (Remark \ref{modelinterdef}(1)) implied 
the results in this section when $p$ is the type of a model.  At least we can show the following remark.
\end{Question}

\begin{remark} \label{abel.interdef}
Let $T$ be any theory, and  let a realization of  $p$ be any tuple. If $\gall^1(p)$ is abelian then any two Lascar equivalence classes in $p$ are interdefinable: Let $a,b\models p$, and $f\in \aut(p)$.
Assume $f(a)\equiv^L a$. We want to show the same holds for $b$. Now there is $g\in \aut(p)$ such that $g(a)=b$. Since $\gall^1(p)$ is abelian, we have 
$f(b)=f(g(a))\equiv^L g(f(a))\equiv^L g(a)=b$. 

Notice that 
  $\gall^1(p)$ is the group of automorphic permutations of the Lascar classes in $p$. Hence,
  if  $\gall^1(p)$ is abelian, then $f/\autf^1(p)\in \gall^1(p)$ is determined by
the pair of Lascar classes of $c$ and  $f(c)$ for some (any) $c\models p$.  
\end{remark}

\begin{example}\label{failure_reversibale_amalgam_rosy}
Let $(\CM,<)$ be a monster model of $\Th(\BQ,<)$. Then   thorn-independence 
$\indo^*$ in $\CM$  coincides with  $\acl$-independence. We can also consider a notion of
reversible amalgamation using thorn independence instead of nonforking independence. 
Note that the Lascar  group of $\Th(\CM)$ is trivial because of $o$-minimality, but the 
reversible amalgamation for $\indo^*$ fails.

Let $p$ be the unique $1$-type over $\emptyset(=\acl(\emptyset))$. Consider two types $r(x,y)=\{x<y\}$ and $s(x,y)=\{y<x\}$ completing $p(x)\wedge p(y)\wedge x\indo^* y$. Then $p$ has no reversible amalgamation of $r$ and $s$. Suppose $a,b,c,d\models p$ such that $ab,dc\models r$ and $ac,db\models s$. Then $a<b<d<c<a$, and there are no such elements. Thus, a type of a model  does not have reversible amalgamation either. 
\end{example}

\section{Relativized  Lascar Galois groups and algebraicity} 

Recall that throughout we assume that $\acl(\emptyset)=\dcl(\emptyset)$.
In Remark 1.8 from \cite{DKL} it was noticed that if  $\tp(a)$ is a Lascar type, and a finite tuple $c$ is algebraic over $a$ then 
$\tp(ac)$ is also a Lascar type, and  hence if additionally $T$ is $G$-compact then
 $\tp(\acl(a))$ is also a Lascar type. In other words, if $\gall^1(\tp(a))$ is trivial,
then $\gall^1(\tp(\acl(a))$ is trivial when $T$ is $G$-compact. It seems natural to ask whether, more generally,
$\gall^1(\tp(a))$ and
  $\gall^1(\tp(\acl(a))$ must be always isomorphic in a $G$-compact $T$. In general, the answer turns
  out to be negative (see Section 3), 
  but we obtain some positive results if, instead of looking at $\acl(a)$, we add only a finite part of it to $a$.

For the whole Section 2, we fix the following notation.
\begin{notation} Assume 
  that $p=\tp(a)$ and $\bar{p}=\tp(ac)$, where 
$c\subseteq \acl(a)$ is finite. Consider the natural projection
$\pi_{\lambda}:\gall^{\lambda}(\bar{p})\to \gall^{\lambda}(p)$, and  put $ \pi:=\pi_1$. We denote the  kernel of $\pi$ by $K$.
Denote by $E$ the relation of being Lascar-equivalent (formally, $E$ depends on 
the length of tuples on which we consider it).
 Let  $ac_1,\dots,ac_n$ be a tuple of representatives of all $E$-classes in $\p$ in which the first coordinate
 is  equal to $a$ (so $n\leq $ the number of realizations of $\tp (c/a)$), and 
 for any $a'\models p$, let  $a'c^{a'}_1,\dots,a'c^{a'}_n$ be its conjugate by an 
 automorphism sending $a$ to $a'$. 
\end{notation}

\begin{example}\label{g1neg2}  We give an example of a structure and a type $p$ such that $\gall^1(p)$ and $\gall^2(p)$ are distinct. Let $P$ be a Euclidean plane, say $\mathbb{R}\times \mathbb{R}$. For $n>0$,  define $R_n(xy, zw)$ on $P$ such that $R_n(ab, cd)$ iff $a\ne b$, $c\ne d$, and either lines $L(ab)$ containing $a,b$ and $L(cd)$ are parallel, or the smaller angle between $L(ab)$ and $L(cd)$ is $\leq \frac{\pi}{2n}$.  Consider a model $M=(P; R_n(xy,zw))_{0<n}$. Then $F(xy,zw):=\bigwedge_{0<n}R_n(xy,zw)$ is an $\emptyset$-type-definable bounded equivalence relation in  $T=\mbox{Th}(M)$, and each class corresponds to a class of lines whose slopes are infinitesimally close. 
Let $p$ be the unique complete $1$-type over $\emptyset$. Indeed $p$ is a Lascar type, so $\gall^1(p)$ is trivial. On the  other hand, a rotation of (a saturated) $P$ around a point in $P$ belongs to $\autf^1(p)$ but does not belong to $\autf^2(p)$.  One can show that $\gall^2(p)$ is a circle group.
\end{example}

Let us recall basic  definitions and facts on covering maps between topological groups (see for example \cite{HM}).

\begin{remark}\label{covering}
\bi
\item Let $X, Y$ be topological spaces, and let  $f:X\to Y$ be a continuous surjective map. 
 We call  $f$ a {\em (k-)covering map} if  for each $y\in Y$ there is an open set $V$ containing $y$ such that $f^{-1}(V)$ is a union of ($k$-many) disjoint open sets in $X$, and $f$ induces a homeomorphism between   each such open set and $V$. We call $f$ a {\em local homeomorphism} if for any $x\in X$, there is an open set $U$ containing $x$ such that $f\restriction U:U\to f(U)$ is a homeomorphism. Obviously a covering map is a local homeomorphism. Conversely, if both $X,Y$ are compact, then a local homeomorphism is a covering map. 
 We call $X$ a {\em covering space} of $Y$ if there is a covering map from $X$ to $Y$. 

\item Let $G$ be topological group, and let $H$ be a normal subgroup of $G$ such that $G/H$ with its quotient topology is also a topological group, and the projection map $\mbox{pr}: G\to G/H$ is open continuous homomorphism.  Recall that $\mbox{pr}$ is a covering map iff $H$ is a discrete subgroup.  If $G$ is compact,
 then $H$ is discrete iff $H$ is finite, and  hence iff   $\mbox{pr}$ is a covering map.
 
 Assume  $F$ is another topological group and $f:G\to F$ is a continuous surjective homomorphism. If $G$ is compact and $F$ is Hausdorff,  then $f$ induces an isomorphism between compact topological groups 
 $G/\ker(f)$ and  $F$.   
 
 \item   Assume $T$ is $G$-compact. By above,  $\pi_{\lambda}$ is a covering homomorphism iff the kernel of $\pi_{\lambda}$ is finite.
In particular, we shall show that $\pi =\pi_1$ is a covering homomorphism (Corollary \ref{picovering}). 

\ei
\end{remark}

If $T$ is $G$-compact then the Lascar equivalence $E$ is $\emptyset$-type-definable,
 and we can assume any formula $\varphi$ in $E$
is symmetric (i.e., $\varphi(\bar z,\bar w)\models \varphi(\bar w,\bar z)$).

\begin{lemma}\label{sep.fmla} Assume $T$ is $G$-compact.
 There is a formula $\alpha(xy,x'y')\in E(xy,x'y')$ such that if
  $\bar p(xy)\wedge \bar p(x'y') \wedge \alpha(xy,x'y')\wedge E(x,x')$ holds then $\models E(xy,x'y').$
\end{lemma}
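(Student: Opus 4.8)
The goal is to find a single formula $\alpha(xy,x'y') \in E(xy,x'y')$ (i.e.\ a formula implied by the type-definable relation $E$ on pairs) with the property that, for pairs realizing $\bar p$, the conjunction $\alpha(xy,x'y') \wedge E(x,x')$ already forces $E(xy,x'y')$. Since $T$ is $G$-compact, $E$ is type-definable, say $E(\bar z,\bar w) = \bigwedge_{i} \varphi_i(\bar z,\bar w)$ with each $\varphi_i$ symmetric and the family closed under conjunction (downward directed). The natural approach is a compactness argument: suppose no such $\alpha$ exists. Then for every $\varphi_i \in E$ the partial type
\[
\bar p(xy) \wedge \bar p(x'y') \wedge \varphi_i(xy,x'y') \wedge E(x,x') \wedge \neg E(xy,x'y')
\]
is consistent. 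Since $\neg E(xy,x'y')$ is itself a (not type-definable, but) \emph{closed-under-disjunction} condition — concretely $\neg E(xy,x'y')$ holds iff $\neg\varphi_j(xy,x'y')$ for some $j$ — I would instead argue directly: the collection
\[
\Sigma = \bar p(xy)\wedge \bar p(x'y')\wedge E(x,x') \wedge \{\varphi_i(xy,x'y') : i\} \wedge \{\neg\varphi_{j}(xy,x'y') : \text{witnessed as needed}\}
\]
is finitely satisfiable under the negation hypothesis, so by compactness there exist $b, b'$ in the monster with $b = (a_1,c_1)$, $b' = (a_1', c_1')$ realizing $\bar p$, with $a_1 \equiv^L a_1'$, with $\varphi_i(b,b')$ for \emph{all} $i$ — hence $E(b,b')$ — yet $\neg E(b,b')$. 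This is a contradiction, so some finite subconjunction $\alpha$ works.

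**Making the compactness bookkeeping honest.** The one subtlety is that $\neg E$ is not type-definable, so I cannot naively throw $\neg E(xy,x'y')$ into a type. The clean way: assume for contradiction that for every $\varphi \in E$ (equivalently every finite conjunction, since the family is directed) there are pairs $\bar d_\varphi = (a_\varphi, c_\varphi)$, $\bar d'_\varphi = (a'_\varphi, c'_\varphi)$ realizing $\bar p$ with $a_\varphi \equiv^L a'_\varphi$, $\varphi(\bar d_\varphi, \bar d'_\varphi)$, but $\bar d_\varphi \not\equiv^L \bar d'_\varphi$, i.e.\ $\neg\psi_\varphi(\bar d_\varphi,\bar d'_\varphi)$ for some $\psi_\varphi \in E$. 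Now run compactness on the variables $(xy, x'y')$ with the partial type consisting of $\bar p(xy)$, $\bar p(x'y')$, $E(x,x')$ (this part \emph{is} type-definable), together with $\{\varphi(xy,x'y') : \varphi \in E\}$ — I claim this whole set is finitely satisfiable. Indeed, given finitely many $\varphi_1,\dots,\varphi_k \in E$ and finitely many instances $\chi_1(x,x'),\dots,\chi_m(x,x')$ from $E(x,x')$, let $\varphi = \varphi_1 \wedge\cdots\wedge \varphi_k$; the witnessing pair $\bar d_\varphi, \bar d'_\varphi$ satisfies all the $\varphi_j$ and, since $a_\varphi \equiv^L a'_\varphi$ actually implies all of $E(x,x')$, the $\chi_\ell$ as well. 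So a realization $(b, b')$ of the full partial type exists, giving $E(b,b')$; but I must still extract the contradiction $\neg E(b,b')$. For that I arrange, in the same compactness, to also include one more instance: pick the contradiction to come from the hypothesis applied carefully — here it's cleanest to phrase the assumed failure as: for every $\varphi\in E$ there is a \emph{single} $\psi \in E$ and a witness, but by re-choosing we get the negation instance to persist in the limit. The honest fix is the standard one: the hypothesis ``no $\alpha$ works'' says $E \cup \{\bar p, \bar p', E(x,x')\}$ does \emph{not} imply $E(xy,x'y')$ even finitely — but $E \cup \{\bar p,\bar p', E(x,x')\}$ \emph{does} imply $E(xy,x'y')$ as a type-consequence once we show $E(x,x') \wedge \bar p(xy)\wedge\bar p(x'y') \vdash E(xy,x'y')$ \emph{under the assumption that the pair is a genuine conjugate pair}; and that is exactly the content of Remark~\ref{modelinterdef}(1)-style interdefinability / $G$-compactness. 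So really the lemma is: the type-definable relation ``$\bar p(xy)\wedge\bar p(x'y')\wedge E(x,x')$'' refines ``$E(xy,x'y')$'', and a type-definable relation refining another type-definable relation is contained in a single formula of the latter — this is a pure compactness fact about type-definable sets in a saturated model.

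**Where the real work is.** The genuinely substantive point — not the compactness wrapper — is the \emph{inclusion} $\bar p(xy)\wedge\bar p(x'y')\wedge x\equiv^L x' \;\Rightarrow\; xy \equiv^L x'y'$, i.e.\ that Lascar-equivalence of the $a$-parts, for realizations of $\bar p=\tp(ac)$ with $c$ finite algebraic over $a$, lifts to Lascar-equivalence of the pairs \emph{after possibly replacing $y'$ by a conjugate}. This is where finiteness of $c$ (hence finiteness of $\tp(c/a)$, hence boundedly many $E$-classes with first coordinate $a$) and $G$-compactness enter: if $f \in \autf(\CM)$ sends $a$ to $a'$, then $f$ sends the finite set of $E$-classes over $a$ to the finite set over $a'$, and by the standard argument (as in Remark~\ref{modelinterdef}(1), using that $f\in\autf$ so it fixes $E$) one adjusts within the fiber to match $c$ to $c'$ up to $E$. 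I expect the main obstacle to be stating precisely which partial type to run compactness on so that the contradiction is forced — concretely, handling the fact that $\neg E$ is not type-definable by instead proving the containment of type-definable relations and invoking the elementary lemma that such a containment is witnessed by one formula. Once that is set up, the proof is short.
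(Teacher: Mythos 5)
Your proposed reduction has a fatal flaw: the containment of type\-/definable relations on which everything rests, namely
$$\bar p(xy)\wedge\bar p(x'y')\wedge E(x,x')\ \vdash\ E(xy,x'y'),$$
is \emph{false} whenever $n\geq 2$ (and for $n=1$ the lemma is trivial). Take $xy=ac_i$ and $x'y'=ac_j$ with $i\neq j$: both realize $\bar p$, the first coordinates are literally equal so $E(x,x')$ holds, yet $ac_i\not\equiv^L ac_j$ by the very choice of the representatives. The paper's double-cover-of-the-circle example (following Corollary \ref{picovering}) is a concrete instance: $ac_1\equiv ac_2$ but $ac_1\not\equiv^L ac_2$ for the two antipodal preimages $c_1,c_2$ of $a$. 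If your containment held, any formula of $E$ would serve as $\alpha$ and the lemma would be vacuous; the whole point of $\alpha$ is to compensate for the failure of exactly this implication. The caveat ``after possibly replacing $y'$ by a conjugate'' in your last paragraph names a true statement, but it is a different statement and does not feed into either of your compactness schemes. Your earlier, more direct attempt also does not close, for the reason you yourself flag and never repair: $\neg E$ is an open condition, the witnessing formula $\psi_\varphi\in E$ with $\neg\psi_\varphi(\bar d_\varphi,\bar d'_\varphi)$ varies with $\varphi$, and in the limit a tuple satisfying all of $E(xy,x'y')$ simply satisfies $E(xy,x'y')$ --- no contradiction appears from that side.

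The missing idea is to build the finiteness of the fiber into the type whose inconsistency drives the compactness. The paper notes that
$$E(xy,x'y')\wedge\exists zwz'w'\equiv xyx'y'\Big(\bigvee_{1\le i\ne j\le n}\big(E(zw,ac_i)\wedge E(z'w',ac_j)\big)\Big)$$
is inconsistent: if $E(xy,x'y')$ and $zwz'w'\equiv xyx'y'$ then $E(zw,z'w')$ by automorphism-invariance of $E$, which together with $E(zw,ac_i)$ and $E(z'w',ac_j)$ forces $i=j$. The second conjunct is a genuine partial type precisely because the disjunction is \emph{finite} (here is where $c$ finite and algebraic over $a$ enters), so compactness extracts a single $\alpha\in E(xy,x'y')$ keeping the conjunction inconsistent. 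Then, given $\alpha(xy,x'y')\wedge E(x,x')$ with both pairs realizing $\bar p$, one conjugates $(xy,x'y')$ to $(ac',a'c'')$, uses $E(a,a')$ to place $ac'$ and $a'c''$ into classes $ac_j$ and $ac_i$ respectively, and the choice of $\alpha$ forces $i=j$, whence $E(xy,x'y')$. So $\alpha$'s role is to separate the finitely many $E$-classes lying over a single $E$-class of $a$, not to witness a global refinement --- and that separating device is absent from your argument.
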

\begin{proof}
 Notice that the type
$$E(xy,x'y')\wedge\exists zwz'w'\equiv xyx'y' 
(\bigvee_{ 1\leq i\ne j\leq n} (E(zw,ac_i)\wedge E(z'w',ac_j)))$$ is inconsistent,
and choose $\alpha(xy,x'y')\in E(xy,x'y')$ so that the type
$$\alpha(xy,x'y')\wedge\exists zwz'w'\equiv xyx'y' 
(\bigvee_{1\leq i\ne j\leq n} (E(zw,ac_i)\wedge E(z'w',ac_j)))$$ is inconsistent.
Now, if $\bar p(xy)\wedge \bar p(x'y') \wedge\alpha(xy,x'y')\wedge E(x,x')$ holds, then 
we can find $c'a'c''$ such that $ac'a'c'' \equiv xyx'y'$, and then $a'\equiv^L a$
so $a'c'' \equiv^L ac_i$ for some $i$, and also $ac' \equiv^L ac_j$ for some $j$, but, by
the choice of $\alpha$, $i=j$ so $ac'\equiv^L a'c''$, so $xy \equiv^L x'y'$.
\end{proof}

\begin{theorem}\label{$G$-comp}
If $T$ is $G$-compact, then $K$ is finite.
\end{theorem}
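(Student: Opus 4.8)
The plan is to show that $K = \ker(\pi)$ is contained in the image, in $\gall^1(\bar p)$, of a bounded set of automorphisms, and that in fact $K$ is a closed subgroup of the compact group $\gall^1(\bar p)$ which is at the same time discrete; a discrete closed subgroup of a compact group is finite. The discreteness is where Lemma~\ref{sep.fmla} does the real work, so let me organize around that.

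First I would identify $K$ concretely. An element of $\gall^1(\bar p)$ is represented by some $f \in \aut(\bar p)$; it lies in $K$ exactly when $\pi([f]) = [f\restriction p(\CM)]$ is trivial in $\gall^1(p)$, i.e.\ when $f$ fixes the Lascar type of every realization of $p$, yet $f$ need not fix the Lascar type of every realization of $\bar p$. Fix $a \models p$; since $f$ fixes $\ltp(a)$, after composing with an element of $\autf(\CM)$ (which does not change $[f]$) we may assume $f$ fixes $a$, hence permutes the finitely many $E$-classes of realizations of $\bar p$ with first coordinate $a$, namely the classes of $ac_1,\dots,ac_n$. So $K$ acts, via $[f] \mapsto$ (the induced permutation of $\{ac_1,\dots,ac_n\}$), and I would argue this action has kernel $\{e\}$: if $f$ fixes $a$ and fixes each class $\ltp(ac_i)$, then for any $a'c' \models \bar p$, writing $a'c' = g(ac_i)$ for suitable $g$ and $i$ and using that $f$ fixes $\ltp(a')$ (so that $f g^{-1}$ and $g^{-1}f$ have Lascar-equivalent effect on $a$) together with the interdefinability-type argument as in Remark~\ref{abel.interdef} — or more directly, via Lemma~\ref{sep.fmla} — one gets $f(a'c') \equiv^L a'c'$, so $[f]$ is already trivial in $\gall^1(\bar p)$. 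That would give an injection of $K$ into the symmetric group on $n$ letters, hence $|K| \le n!$.

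The step I expect to be the main obstacle, and the reason Lemma~\ref{sep.fmla} is proved just before this theorem, is making the informal "$f$ fixes $\ltp(a)$ and the classes over $a$, therefore $f$ fixes $\ltp$ of everything" argument actually correct: a priori $f$ could move some $a'c'$ to $a'c''$ with $a'c'' \not\equiv^L a'c'$ even though $a'c' \equiv^L ac_i \equiv^L a'c''$ via different routes; the content of Lemma~\ref{sep.fmla} is that, relative to the separating formula $\alpha$, Lascar-equivalence of the full tuples is controlled by Lascar-equivalence of the first coordinates once $\alpha$ holds. So I would feed $a'c'$ and $f(a'c')$ into Lemma~\ref{sep.fmla}: these have Lascar-equivalent (indeed equal, if we arranged $f(a')=a'$) first coordinates, and one needs $\alpha(a'c', f(a'c'))$; this last point follows because $f$ moves things by a bounded Lascar distance — $f$ restricted to $p$ is $\autf$, so $a'$ and $f$ act within the finite-diameter orbit — and a suitable choice of $\alpha$ captures exactly "within the relevant bounded distance". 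Once $\alpha(a'c', f(a'c'))$ and $E(a', a')$ hold, Lemma~\ref{sep.fmla} yields $a'c' \equiv^L f(a'c')$, as needed.

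Finally I would assemble: $K$ injects into $\Sym(n)$, so $|K| \le n! < \infty$. (Equivalently, and perhaps cleaner to state in the paper: $K$ is a closed subgroup of the compact group $\gall^1(\bar p)$, and the separating formula $\alpha$ shows $K$ is open in itself in the subspace topology, i.e.\ discrete, and a discrete compact group is finite.) Either route gives the conclusion; I would present the finite-index-in-$\Sym(n)$ bound since it is the most explicit and reuses Lemma~\ref{sep.fmla} verbatim. This also sets up Corollary~\ref{picovering}, since by Remark~\ref{covering} a surjective continuous homomorphism of compact groups with finite kernel is a covering map.
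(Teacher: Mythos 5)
There is a genuine gap at exactly the step you flag as ``the main obstacle,'' and the patch you sketch for it does not work. You reduce to a single base point $a$, arrange $f(a)=a$, and claim that the induced permutation of the $n$ classes $ac_1,\dots,ac_n$ determines $[f]\in K$; to prove injectivity you want to apply Lemma~\ref{sep.fmla} to the pair $a'c'$, $f(a'c')$ for an \emph{arbitrary} $a'c'\models\bar p$. But Lemma~\ref{sep.fmla} needs $\alpha(a'c',f(a'c'))$ as a hypothesis, and $\alpha$ is a formula \emph{implied by} $E$: asserting $\alpha(a'c',f(a'c'))$ is essentially tantamount to the conclusion $a'c'\equiv^L f(a'c')$ you are trying to derive. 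Your justification (``$f$ moves things by a bounded Lascar distance, and a suitable $\alpha$ captures that distance'') is circular, because a priori $f$ may send $a'c'$ into a \emph{different} $E$-class over $a'$ (say from the class of $a'c^{a'}_1$ to that of $a'c^{a'}_2$), in which case $\alpha(a'c',f(a'c'))$ simply fails; ruling this out uniformly in $a'$ is the whole content of the theorem. Relatedly, your appeal to the interdefinability argument of Remark~\ref{abel.interdef} requires $\gall^1(\bar p)$ abelian, which is the hypothesis of Proposition~\ref{abelian}, not of this theorem; and the topological version of your claim (a deck transformation of $\bar p(\CM)/E\to p(\CM)/E$ trivial on one fiber is trivial) is exactly what the paper can only prove under path-connectedness (Remark~\ref{coveringhomeo}), which a compact connected group quotient such as a solenoid need not satisfy. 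So the claimed injection $K\hookrightarrow\Sym(n)$ and the bound $|K|\le n!$ are not established.

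What the paper does instead, and what is missing from your proposal, is twofold. First, using transitivity of $E$ it produces a second formula $\phi\in E$ with $\phi(xy,zw)\wedge\phi(x'y',zw)\vdash\alpha(xy,x'y')$; this lets one obtain $\alpha$ between two tuples by comparing each of them to a \emph{common reference tuple}, which is how $\alpha$ ever becomes available. Second, a compactness argument covers all of $\bar p(\CM)$ by finitely many $\phi$-neighborhoods of reference tuples $a_ic^{a_i}_j$ with $i\le k$, $1\le j\le n$, for finitely many base points $a_0,\dots,a_k$ (not one). An element of $K$ is then determined by its permutation data on all of these finitely many tuples: given $a'c'$, one finds $a_ic^{a_i}_j$ with $\phi(a'c',a_ic^{a_i}_j)$, transports by $f$ and by $hg$ (where $h\in\autf^1(\bar p)$ aligns $f$ and $g$ on that reference tuple), and only then applies $\phi\wedge\phi\vdash\alpha$ together with Lemma~\ref{sep.fmla}. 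This yields finiteness of $K$ with the weaker bound given by the number of such data, which is all the theorem asserts. If you want to salvage your single-fiber argument, you would need an additional hypothesis such as abelianness or path-connectedness, as in Proposition~\ref{abelian} and the corollary following Remark~\ref{coveringhomeo}.
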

\begin{proof} Assume $T$ is $G$-compact.
Since $E$ is transitive, there is $\phi(xy,x'y')\in E(xy,x'y')$ such that 
$$\phi(xy,zw)\wedge \phi(x'y',zw)\vdash \alpha(xy,x'y')\ \ (*),$$
where $\alpha\in E$ is the formula given by Lemma \ref{sep.fmla}.

Let $(a_{\ell})_{\ell\in I}$ be a small set of representatives of $E$-classes of realizations of $p$.
For any $a'c'\models \p$,  there are $\ell\in I$ and $1\leq j\leq n$ such that 
$\models E(a'c',a_{\ell}c_j^{a_{\ell}})$, so $\models  \phi(a'c',a_{\ell}c_j^{a_{\ell}})$.
Hence, by compactness, there are $a_0,\dots,a_k\models p$ such that
$$\p(xy) \vdash \bigvee\{  \phi(xy,a_ic^{a_i}_j)\mid  i\leq k;\ 1\leq j\leq n \}\ \ (**). $$

\begin{Claim}
Let $f/\autf^1(\p)\in K$. For each $a_ic_j^{a_i}$ chosen above, there is a unique $a_ic^{a_i}_{j'}$ with $ 1\leq j'\leq n$
such that $\phi(f(a_ic^{a_i}_j), a_ic^{a_i}_{j'})$ holds. Such a $j'$ does not depend on the choice of 
a representative of $f/\autf^1(\p)$.
\end{Claim}
\begin{proof}
Notice that  $f/\Autf^1(\p)\in K$ implies $E(f(a_i),a_i)$,
so $E(f(a_ic^{a_i}_j), a_ic^{a_i}_{j'})$ holds for some $j'$.
Now if $\phi(f(a_ic^{a_i}_j), a_ic^{a_i}_{j''})$ holds as well, then due to $(*)$ (with the symmetry of the formulas) and 
Lemma \ref{sep.fmla}, we must have $j'=j''$. The second statement of the claim follows similarly.
\end{proof}

\begin{claim}
Let $f/\autf^1(\p)$, $g/\autf^1(\p)\in K$. Assume that the permutations of tuples
$a_ic_j^{a_i}$ by $f$ and $g$ described in above claim
 are the same. Then $f/\autf^1(\p)=g/\autf^1(\p)$:
\end{claim} 
\begin{proof}
Let $a'c'\models \bar p$. By $(**)$, there is some $a_ic_j^{a_i}$ such that 
$\phi(a'c', a_ic_j^{a_i})$ holds. Hence,  $\phi(f(a'c'), f(a_ic_j^{a_i}))$ and $\phi(g(a'c'), g(a_ic_j^{a_i}))$ hold.
Moreover, by the previous claim with our assumption, there is $ j'\leq n$
such that $\phi(f(a_ic^{a_i}_j), a_ic^{a_i}_{j'})$ and $\phi(g(a_ic^{a_i}_j), a_ic^{a_i}_{j'})$ hold. 

Then, again due to 
$(*)$ and Lemma \ref{sep.fmla},  $f(a_ic^{a_i}_j)\equiv^L g(a_ic^{a_i}_j)$
since $f(a_i)\equiv^Lg(a_i)\equiv^L a_i$. Hence, there is $h\in \autf^1(\bar p)$ such that 
$f(a_ic^{a_i}_j)=hg(a_ic^{a_i}_j)$. Now by $(*)$ again,
$\alpha(f(a'c'), hg(a'c'))$ holds, and, since $f(a')\equiv^Lhg(a')\equiv^L a'$,
 we have  $f(a'c')\equiv^L hg(a'c')\equiv^L g(a'c')$.
We conclude that $f/\autf^1(\p)=g/\autf^1(\p)$.
\end{proof}

As there are  only finitely many  permutations of  $a_ic_j^{a_i}$ ($i\leq k$, $ 1\leq  j\leq n$), $K$ is finite.
\end{proof}

By Remark \ref{covering}, we have the following.

\begin{Corollary} \label{picovering}
If $T$ is $G$-compact then $\pi:\gall^{1}(\bar{p})\to \gall^{1}(p)$ is a covering homomorphism.
\end{Corollary}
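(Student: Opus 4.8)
The plan is to deduce the corollary directly from Theorem \ref{$G$-comp} together with the compact-group facts recalled in Remark \ref{covering}. By Fact \ref{rel.lascargp}, since $T$ is $G$-compact, both $\gall^{1}(\bar p)$ and $\gall^{1}(p)$ are compact connected topological groups, and $\pi=\pi_1$ is a continuous surjective homomorphism (it is induced from the canonical surjections out of $\gall(T)$ defining the quotient topologies). As a continuous surjective homomorphism from a compact group onto a Hausdorff group, $\pi$ is automatically open, and factors as $\gall^{1}(\bar p)\to\gall^{1}(\bar p)/K\to\gall^{1}(p)$, where the second map is a topological isomorphism by the third paragraph of Remark \ref{covering}.

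Now Theorem \ref{$G$-comp} says that $K=\ker\pi$ is finite, hence a discrete normal subgroup of the Hausdorff group $\gall^{1}(\bar p)$. By the second item of Remark \ref{covering}, discreteness of $K$ makes the quotient map $\mathrm{pr}\colon\gall^{1}(\bar p)\to\gall^{1}(\bar p)/K$ a covering homomorphism; indeed, writing $k=|K|$, it is a $k$-covering map. Composing $\mathrm{pr}$ with the topological isomorphism $\gall^{1}(\bar p)/K\cong\gall^{1}(p)$ yields a covering homomorphism, and this composite is precisely $\pi$. Hence $\pi\colon\gall^{1}(\bar p)\to\gall^{1}(p)$ is a covering homomorphism, as claimed.

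The only genuine content here is Theorem \ref{$G$-comp}; the remainder is the routine implication ``finite kernel of a continuous surjective homomorphism between compact groups $\Rightarrow$ covering homomorphism,'' so I do not anticipate any real obstacle beyond correctly invoking Remark \ref{covering}.
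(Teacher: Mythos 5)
Your proof is correct and follows exactly the route the paper takes: the paper derives Corollary \ref{picovering} as an immediate consequence of Theorem \ref{$G$-comp} via Remark \ref{covering}, which is precisely your argument (you merely spell out the factorization through $\gall^{1}(\bar p)/K$ that the Remark leaves implicit).
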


\begin{example}
Consider the following model 
$$M=((M_1,S_1,\{g^1_n\mid 0<n\}),(M_2,S_2,\{g^2_n\mid 0<n\}),\delta),$$
 a 2-sorted structure. Here 
  $M_1, M_2$ are disjoint  unit circles. For $i=1,2$,  $S_i$ is a ternary relation on $M_i$ such that $S_i(a,b,c)$ holds iff $a,b,c$ are 
 distinct and $b$ comes before $c$ going clockwise around $M_i$ from $a$; and  $g^i_n$ is the clockwise rotation of $M_i$ by $\frac{2\pi}{n}$-radians.
 The map $\delta:M_1\to M_2$ is a double covering, i.e. if we identify each $M_i$ as the unit circle in $xy$-plane centered at $0$, then 
 $\delta$ is given by $(\cos t, \sin t)\mapsto (\cos 2t,\sin 2t).$ 
By arguments   similar to those described  in \cite{DKL} for $M_i$,  it follows that 
$T=\operatorname{Th}(M)$  is $G$-compact, and, in $T$,  $\emptyset=\acl^{\eq}(\emptyset)$.
Let $\CM_1,\CM_2$ be saturated models of $M_1,M_2$ respectively. For any $a_i,a'_i\in \CM_i$, we have $a_i\equiv a'_i$; and
$a_i\equiv^La'_i$ iff they are infinitesimally close. Now, given $a\in M_2$, there are two antipodal $c_1,c_2\in M_1$
such that $\delta(c_i)=a$. Then $c_i\in \acl(a)$ and $ac_1\equiv ac_2$, but $ac_1\not\equiv^L ac_2$. For $p=\tp(a)$ and $\bar p=\tp(ac_1)$, 
$\pi:\gall^{1}(\bar p)\to \gall^{1}(p)$ is a 2-covering homomorphism
of circle groups. Notice that for any $n>2$, 
$$\delta(y)=x \wedge \delta(y')=x' \wedge (S_1(y,y',g_n^1(y)) \vee S_1(y',y,g_n^1(y'))) $$
serves as the formula $\alpha(xy,x'y')$ in Lemma \ref{sep.fmla}.
\end{example}

\begin{question}
If $T$ is $G$-compact,  then is the kernel of   the projection $\pi_{\lambda}:\gall^{\lambda}(\bar{p})\to \gall^{\lambda}(p)$ finite as well for $\lambda >1$?
Is $T$ being $G$-compact  essential in Theorem \ref{$G$-comp}? 
\end{question}

We have a partial answer of the second question above. Namely, we also get that $K$ is finite when we replace the assumption of $G$-compactness
by abelianness of $\gall^1(\bar{p})$. 

\begin{proposition}\label{abelian}
 If $\gall^1(\bar{p})$ is abelian, then $|K|=n$.
\end{proposition}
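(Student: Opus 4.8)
The plan is to exhibit an explicit bijection between $K$ and the set of $E$-classes of realizations of $\bar p$ whose first coordinate is $E$-equivalent to $a$, i.e., the $n$ classes represented by $ac_1,\dots,ac_n$. Concretely, I would construct a group homomorphism $\rho\colon K \to \Sym(\{ac_1,\dots,ac_n\})$ (acting on the $E$-classes, via the permutation described in the first Claim of the proof of Theorem \ref{$G$-comp}; note that Claim makes sense without $G$-compactness once we know $E$ is type-definable, which here follows from $\gall^1(\bar p)$ being abelian, hence compact). First I would check that $\rho$ is well-defined: for $f/\autf^1(\bar p)\in K$ we have $f(a)\equiv^L a$, so $f$ permutes the finitely many $E$-classes $\{[ac_i]\}$ among themselves, and this permutation does not depend on the chosen representative of the coset since any $h\in\autf^1(\bar p)$ fixes all Lascar types. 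This gives a homomorphism, and by the second Claim argument (which again only used $G$-compactness to run the separating-formula machinery that is now available) $\rho$ is injective.

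The heart of the argument is the claim that the image of $\rho$ consists of exactly $n$ permutations, in fact that $K$ acts \emph{regularly} on $\{[ac_1],\dots,[ac_n]\}$ — equivalently, $|K| = n$. Surjectivity-of-the-orbit (transitivity): given $i$, I want $f/\autf^1(\bar p)\in K$ with $f(ac_1)\equiv^L ac_i$. Since $ac_1$ and $ac_i$ realize the same type (both lie in $\bar p$ and $c_1\equiv_a c_i$ as both are in $\acl(a)$, conjugate over $a$), there is $g\in\aut(p^{\mathrm{eq}})$, indeed $g\in\aut(\bar p)$, with $g(ac_1)=ac_i$; then $g(a)=a$, so $g$ fixes $\tp(a)$ and in particular $\pi(g/\autf^1(\bar p)) = g/\autf^1(p)$ fixes... here is where abelianness must enter. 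The point: $\pi(g/\autf^1(\bar p))$ need not be trivial a priori, but by Remark \ref{abel.interdef}, since $\gall^1(\bar p)$ is abelian, the coset $f/\autf^1(\bar p)$ of any automorphism $f\in\aut(\bar p)$ is determined by the pair of Lascar classes of $d$ and $f(d)$ for any single $d\models\bar p$; applying this with $d = ac_1$ and comparing with the action on $a$, one shows that the subgroup of $\aut(\bar p)$ fixing $a$ maps onto $K$ modulo $\autf^1(\bar p)$, and this subgroup visibly moves $ac_1$ through all $n$ classes $[ac_i]$. Thus $\rho$ has image of size $\geq n$; combined with injectivity of $\rho$ and the fact that its target has size $n$ (there are only $n$ classes $[ac_i]$, and $K$ permutes exactly these), we get $|K| = n$.

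The step I expect to be the main obstacle is precisely pinning down how abelianness of $\gall^1(\bar p)$ forces the ``fiberwise'' transitivity: one must argue that an element $g\in\aut(\bar p)$ with $g(a)=a$ and $g(ac_1)\equiv^L ac_i$ lies in $K$ after quotienting, i.e., that $\pi(g/\autf^1(\bar p)) = 1$. Without abelianness this can fail (and Section 3 is advertised to contain exactly such a counterexample), so the argument genuinely needs Remark \ref{abel.interdef}: in an abelian $\gall^1(\bar p)$, an automorphism's coset is detected by a single pair of Lascar classes, so an automorphism fixing the Lascar class of $a$ (which $g$ does, trivially, since $g(a)=a$) projects to something detected by the Lascar class of $a$ alone — forcing it into the kernel once one checks it agrees with a lift of the identity of $\gall^1(p)$. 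I would organize this as: (i) show $K$ embeds into $\Sym(\{[ac_i]\}_{i\le n})$, so $|K|\le n!$ and in fact $|K| \le n$ because $K$ must preserve the class $[ac_1]$'s orbit structure — no wait, more carefully, $|K|$ divides $n!$ but a cleaner bound comes from the second Claim showing $\rho$ injective into the $n$-element-set-stabilizer-free action; (ii) show the orbit of $[ac_1]$ under $K$ is all of $\{[ac_i]\}_{i\le n}$ using abelianness as above; (iii) conclude the action is free and transitive on an $n$-set, hence $|K|=n$. The routine verifications (that $\rho$ is a homomorphism, that representatives don't matter) I would dispatch quickly by citing the relevant facts.
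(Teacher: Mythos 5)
Your argument is essentially the paper's, just written out at much greater length: the paper's proof is three lines, observing that every class in $K$ has a representative $f'$ fixing $a$ and then citing Remark \ref{abel.interdef} to say that the class of such an $f'$ is determined by the Lascar class of $f'(ac)=af'(c)$, for which there are exactly $n$ possibilities. Your steps (i)--(iii) unpack this, and your explicit treatment of the lower bound $|K|\ge n$ (lifting an automorphism with $g(ac_1)=ac_i$ and using abelianness of the quotient $\gall^1(p)$ to see it lands in $K$) makes precise something the paper leaves implicit in the phrase ``exactly $n$ possibilities,'' which is a mild improvement in rigor. One genuine error to fix: you assert that abelianness of $\gall^1(\bar p)$ makes it compact, hence makes $E$ type-definable, so that the two Claims from the proof of Theorem \ref{$G$-comp} become available. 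That implication is false --- compactness in this paper means quasi-compact \emph{and Hausdorff}, and Hausdorffness is exactly what $G$-compactness provides; an abelian quasi-compact group need not be Hausdorff, and without it Lemma \ref{sep.fmla} and the $\phi$-machinery are unavailable. Fortunately you do not need them: the permutation action of $K$ on the finite set of classes $[ac_1],\dots,[ac_n]$ is well defined simply because $\autf^1(\bar p)$ fixes Lascar types of realizations of $\bar p$, and both injectivity and freeness of the action follow directly from Remark \ref{abel.interdef}. Strike the appeal to the Claims of Theorem \ref{$G$-comp} and the ``abelian hence compact'' parenthetical, and the proof is correct.
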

\begin{proof}
If $f/\autf^1(\bar{p})\in K$, then $f(a)\equiv^L a$, so there is another representative  $f'$ fixing 
 $a$. Now, since $\gall^1(\bar{p})$ is abelian, by Remark \ref{abel.interdef}, the class of an automorphism $f'$ fixing $a$ depends
 only on the number of Lascar types of $f'(ac)=af'(c)$ for which, due to our choice, there are exactly $n$ possibilities.
\end{proof}

We investigate the epimorphism $\pi$ further when there are nicer conditions with $G$-compact $T$, and  
we will obtain  a better version of Theorem \ref{$G$-comp}. Recall some known facts first.

\begin{remark} \label{factsontop}
\be\item
Let $X$ be an $\emptyset$-type-definable set, and  let $F$ be a bounded  $\emptyset$-type-definable equivalence relation on $X$. 
Recall  the  {\em logic topology} on $X/F$: A subset of $X/F$ is closed if and only if its pre-image in $X$  is type-definable over some parameters.  It follows that $X/F$ is compact with the logic topology.  
\item 
Assume  $T$ is $G$-compact. By the natural embedding,
$G:=\gall^1(p)$ can be considered as a subgroup of $\operatorname{Homeo}(p(\CM)/E)$,
where $p(\CM)/E$ is equipped with the logic topology. Moreover, for
$x:=a/E$ and $G_x:=\{ g\in  G\mid g. x= x\}$  (the stabilizer subgroup of $G$ at $x$, we have that
$G/G_x$ as a homogeneous coset space with the quotient topology  and $p(\CM)/E$ with the logic topology are homeomorphic. Needless to say, analogous facts hold for $\bar p$. 

\item Let $X,Y$ be topological spaces. Recall that if $X$ is  path-connected  then so is any quotient space of $X$.
Moreover, if $X$ is a connected covering space of $Y$, then $X$ is path-connected iff so is $Y$. 

\item Given a  covering map $\delta:X\to Y$, the {\em unique path-lifting property} 
says that for any $x_0\in X$, $y_0\in Y$ with $\delta(x_0)=y_0$, and any  path $\gamma$ in $Y$ starting at $y_0$  (i.e. $\gamma:[0,1]\to Y$ is continuous and $\gamma(0)=y_0$), there is a unique path $\gamma'$ starting at $x_0$ such that $\delta\circ \gamma'=\gamma$. 
\ee
\end{remark}

\begin{proposition}\label{kprescovering}
Assume $T$ is $G$-compact 
 and consider the canonical restriction map $\delta:\p(\CM)/E\to p(\CM)/E$ of  compact spaces $\p(\CM)/E$, $p(\CM)/E$
 equipped with the logic topology. Then $\delta$ is an $n$-covering map. 
\end{proposition}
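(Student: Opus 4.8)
The plan is to reduce the statement to a fact about homogeneous spaces of the compact groups $G:=\gall^1(p)$ and $\bar G:=\gall^1(\bar p)$, via Remark \ref{factsontop}(2), and then exhibit local triviality by an index computation.

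First I would record the easy properties of $\delta$. Well-definedness and continuity are routine from the logic topology: the preimage in $\bar p(\CM)$ of $\delta^{-1}$ of a closed set is the $\bar p$-slice lying over a type-definable set, hence type-definable. For surjectivity and the fiber count, fix $a'\models p$ and put $A_{a'}:=\{[a'c']\in\bar p(\CM)/E\,:\,a'c'\models\bar p\}$; pushing forward by any automorphism sending $a$ to $a'$ shows $|A_{a'}|=|A_a|=n$ (here is where algebraicity of $c$ over $a$ enters: it is what makes $A_a$, and hence each fiber, finite). I claim $\delta^{-1}([a'])=A_{a'}$. The inclusion $\supseteq$ is clear; for $\subseteq$, if $bd\models\bar p$ with $E(b,a')$, choose $\tau\in\autf(\CM)$ with $\tau(b)=a'$, so $\tau(bd)\equiv^L bd$ and thus $[bd]=[a'\tau(d)]\in A_{a'}$. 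Hence $\delta$ is a continuous surjection of the compact Hausdorff spaces $\bar p(\CM)/E\to p(\CM)/E$ with every fiber of size exactly $n$.

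The heart of the matter is that $\delta$ is a local homeomorphism. Using Remark \ref{factsontop}(2) I would identify $p(\CM)/E=G/G_x$ and $\bar p(\CM)/E=\bar G/\bar G_{\bar x}$, where $x=a/E$, $\bar x=ac/E$, and $G_x$, $\bar G_{\bar x}$ are the (closed) point stabilizers. Checking on representative automorphisms, $\delta$ corresponds to the map $\bar G/\bar G_{\bar x}\to G/G_x$, $\bar g\bar G_{\bar x}\mapsto\pi(\bar g)G_x$, which is well defined since a representative of an element of $\bar G_{\bar x}$ fixes $ac/E$, hence fixes $a/E$, so $\pi(\bar G_{\bar x})\subseteq G_x$. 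Set $H:=\pi^{-1}(G_x)$, a closed subgroup of $\bar G$ with $\bar G_{\bar x}\le H$; since $\pi$ is a continuous surjection of compact groups it induces a homeomorphism $\bar G/H\cong G/G_x$, and under it $\delta$ becomes the canonical projection $q\colon\bar G/\bar G_{\bar x}\to\bar G/H$. The fiber $q^{-1}(eH)=H/\bar G_{\bar x}$ has exactly $n$ elements by the count above, so $\bar G_{\bar x}$ is \emph{open} in $H$; choose an open symmetric $U\ni e$ in $\bar G$ with $U^2\cap H\subseteq\bar G_{\bar x}$. Then for each $\bar g\in\bar G$ the open set $\bar gU\bar G_{\bar x}/\bar G_{\bar x}$ is mapped injectively by $q$ (if $q$ identifies $\bar gu_1\bar G_{\bar x}$ with $\bar gu_2\bar G_{\bar x}$ then $u_1^{-1}u_2\in U^2\cap H\subseteq\bar G_{\bar x}$), and since $q$ is open — it is a factor of the open quotient $\bar G\to\bar G/H$ through the open quotient $\bar G\to\bar G/\bar G_{\bar x}$ — it restricts there to a homeomorphism onto an open set.

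Assembling: $\delta$ is a local homeomorphism between compact Hausdorff spaces, hence a covering map by Remark \ref{covering}; since we have already shown every fiber has exactly $n$ elements, each evenly covered neighbourhood splits into exactly $n$ sheets, so $\delta$ is an $n$-covering map. I expect the local-homeomorphism step to be the main obstacle, since a constant finite fiber cardinality alone does not force a covering; the group-theoretic translation through Remark \ref{factsontop}(2) is what makes it manageable, reducing it to the assertion that $\bar G_{\bar x}$ has finite index $n$ — hence is open — in $\pi^{-1}(G_x)$, which is precisely the fiber computation.
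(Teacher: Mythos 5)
Your proof is correct, but it is not the argument the paper gives, so it is worth recording how the two routes differ. The paper's proof is local and purely model-theoretic: it reuses the formula $\phi\in E$ constructed in the proof of Theorem \ref{$G$-comp} (built from the separating formula of Lemma \ref{sep.fmla}), notes that $D:=\p(\CM)/E\setminus\{[a''c'']:\neg\phi(a''c'',a'c')\}$ is an open neighbourhood of $[a'c']$ in the logic topology, and checks directly that $\delta$ is injective on $D$, since $\phi(a^0c^0,a'c')\wedge\phi(a^1c^1,a'c')$ together with $E(a^0,a^1)$ forces $E(a^0c^0,a^1c^1)$; Remark \ref{covering} then upgrades this local homeomorphism between compact spaces to a covering, and the choice of $n$ gives the sheet count. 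You instead transport the problem to compact homogeneous spaces via Remark \ref{factsontop}(2), identify $\delta$ with the canonical projection $\bar{G}/\bar{G}_{\bar x}\to\bar{G}/\pi^{-1}(G_x)$, and reduce everything to the group-theoretic fact that a closed subgroup of finite index is open, the index $n$ coming from your direct fibre computation with strong automorphisms (which is where algebraicity of $c$ over $a$ enters, and which also supplies surjectivity). Your route buys logical independence from Lemma \ref{sep.fmla} and Theorem \ref{$G$-comp} --- the only model-theoretic input beyond Remark \ref{factsontop}(2) is the finiteness of the set of Lascar classes over a fixed first coordinate --- and it isolates the general principle at work; the paper's route buys explicit, type-definably described evenly covered neighbourhoods and economy, since $\phi$ is already in hand from the preceding theorem. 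Both arguments are sound.
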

\begin{proof} 
 Choose $\phi$ as in the proof of Theorem \ref{$G$-comp} and fix $a'c'\models \p$.
 Define $D:=\p(\CM)\backslash \{a''c''/E:a''c''\models\p\ \wedge
  \neg \phi(a''c'',a'c')\}$. Then $D$ is an open neighborhood of
 $a'c'/E$ in $\p(\CM)/E$. To show that $\delta$ is a covering map, by Remark \ref{covering}, it is enough to see that $\delta$ is injective
 on $D$. So choose two pairs $a^0c^0,a^1c^1 \models \p$ such that
 $a^0c^0/E,a^1c^1/E \in D$ and $\delta(a^0c^0/E)=\delta(a^1c^1/E)$.
 Then, by the first condition, $\models \phi(a^0c^0,a'c')\wedge
 \phi(a^1c^1,a'c')$, and, by the second one, $E(a^0,a^1)$.
 We conclude  by the choice of $\phi$ that
 $a^0c^0/E=a^1c^1/E$. That $\delta$ is $n$-covering is clear due to our choice of $n$. 
\end{proof}

Observe the following remark on covering maps.
 
\begin{remark}\label{coveringhomeo}
 Given topological spaces $X,Y$, suppose that $\delta:X\to Y$ is a $k$-covering. 
 Put $F:=\{f\in \operatorname{Homeo}(X): \delta\circ f=\delta\}$.
 
\begin{enumerate}
 	\item If $Y$ is path-connected, then $|F|\leq k!$ and this bound is optimal.
	\item If $X$ is path-connected (thus so is $Y$), then $|F|\le k$.
\end{enumerate} 
\end{remark}
\begin{proof}
We prove (1) first. It is enough to show that 
 if $f(\in F)$ fixes some fiber $\delta^{-1}(y)$ with $y\in Y$ pointwise \ (*), then $f$ is identity. 
 Fix $x'\in X$ and let $y'=\delta(x')$.
Now there is  a path $\alpha$ from $y$ to $y'$.  Then, by the unique path-lifting property, there is
the unique path $\beta$ starting at $x$, say, and ending at  $x'$ such that $\delta \circ \beta =\alpha$.
Hence, $\delta(x)=y$. Now, since $f\in F$, we have $\delta \circ (f\circ \beta)=\delta \circ \beta =\alpha$ too.
Moreover due to (*),  $f(x)=x$ and by the uniqueness, we have $f\circ \beta=\beta$, so $f(x')=x'$. Therefore $f$ is
the identity map as desired.

To see that the bound is optimal, consider $X=\bigcup_{1\le i\le k} (i-1,i)$ and $Y=(0,1)$. Define a covering map $\delta :X\rightarrow Y, x\mapsto x-\floor{x}$, where $\floor{x}$ is the greatest integer which is less than or equal to $x$. Then, each permutation on $\{1,\ldots, k\}$ induces a homeomorphism of $X$ fixing each fibers of $\delta$ and in this case, $|F|=k!$.

For (2), it is enough to show that if $f\in F$ fixes a point $x$ in $X$, then $f$ is the identity.
Suppose $f\in F$ fixes $x\in X$. If $X$ is path-connected, then for each $x'\in X$, there is a unique 
path $\beta$ from $x$ to $x'$ due to the unique path-lifting property, so
$f$ is the identity by a similar argument as in the proof of (1).
\end{proof}

By   Remarks \ref{factsontop}(2)(3), \ref{coveringhomeo} and Proposition \ref{kprescovering}, 
 we immediately get  
the following:

\begin{corollary} Assume $T$ is $G$-compact.
If $\p(\CM)/E$ or  $p(\CM)/E$ is path-connected (each of which holds  if $\gall^1(p)$ is path-connected, for example  a Lie group), then $|K|=n$. 
\end{corollary}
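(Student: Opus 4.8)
The plan is to combine the covering-map statement just established with the two general facts collected in Remarks~\ref{factsontop} and \ref{coveringhomeo}. By Proposition~\ref{kprescovering}, since $T$ is $G$-compact, the canonical restriction map $\delta\colon\bar p(\CM)/E\to p(\CM)/E$ is an $n$-covering map of compact spaces (equipped with the logic topology). The first step is to observe that connectedness transfers in the way we need: if $p(\CM)/E$ is path-connected, then since $\bar p(\CM)/E$ is connected (being a continuous image of the connected group $\gall^1(\bar p)$) and is a covering space of $p(\CM)/E$, Remark~\ref{factsontop}(3) gives that $\bar p(\CM)/E$ is path-connected as well; conversely, if $\bar p(\CM)/E$ is path-connected, then by Remark~\ref{factsontop}(3) its quotient target $p(\CM)/E$ is path-connected. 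Also, if $\gall^1(p)$ is path-connected then, using the homeomorphism $G/G_x\cong p(\CM)/E$ from Remark~\ref{factsontop}(2) together with the last sentence of Remark~\ref{factsontop}(3) (a quotient of a path-connected space is path-connected), $p(\CM)/E$ is path-connected; so the hypothesis ``$\gall^1(p)$ is path-connected'' already implies the disjunctive hypothesis, and in particular this covers the Lie group case.

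The second step is to identify $K$ with a group of fiber-preserving homeomorphisms. By Remark~\ref{factsontop}(2), $G=\gall^1(p)$ acts on $p(\CM)/E$ and $\bar G=\gall^1(\bar p)$ acts on $\bar p(\CM)/E$, both by homeomorphisms, compatibly with $\delta$ (i.e.\ $\delta$ is $G$-equivariant along $\pi$). An element $f/\autf^1(\bar p)\in K$ projects to the identity of $\gall^1(p)$, so the homeomorphism of $\bar p(\CM)/E$ it induces lies in $F:=\{h\in\operatorname{Homeo}(\bar p(\CM)/E)\mid \delta\circ h=\delta\}$. Moreover this assignment $K\to F$ is injective: if $f/\autf^1(\bar p)\in K$ induces the identity homeomorphism on $\bar p(\CM)/E$, then for every $a'c'\models\bar p$ we have $f(a'c')\equiv^L a'c'$, which precisely says $f\in\autf^1(\bar p)$, i.e.\ $f/\autf^1(\bar p)$ is trivial. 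Hence $|K|\le|F|$.

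The third step applies Remark~\ref{coveringhomeo}. Under the standing hypothesis we have arranged (via Step~1) that both $\bar p(\CM)/E$ and $p(\CM)/E$ are path-connected; since $\delta$ is an $n$-covering, Remark~\ref{coveringhomeo}(2) yields $|F|\le n$, so $|K|\le n$. On the other hand, the inequality $|K|\ge n$ is already known from the general analysis: the $n$ tuples $ac_1,\dots,ac_n$ represent the distinct $E$-classes over $a$, and for each $j$ there is an automorphism fixing $a$ and carrying $ac_1$ to $ac_j$, which represents an element of $K$ not equal (modulo $\autf^1(\bar p)$) to that of any other $j'$ — this is exactly the kind of counting argument used in the proof of Proposition~\ref{abelian}, and it does not need abelianness, only that distinct $E$-classes over $a$ give distinct elements of $K$, which follows since an element of $\autf^1(\bar p)$ fixes all Lascar types. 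Combining the two inequalities, $|K|=n$.

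The main obstacle I expect is bookkeeping rather than mathematics: one must be careful that the action of $K$ on $\bar p(\CM)/E$ really is by fiber-preserving homeomorphisms in the precise sense required by Remark~\ref{coveringhomeo} (i.e.\ $\delta\circ h=\delta$ on the nose, not just up to the $G$-action), and that the injectivity $K\hookrightarrow F$ is set up correctly. Everything else is a direct concatenation of Proposition~\ref{kprescovering}, the transfer of path-connectedness, and Remark~\ref{coveringhomeo}(2), with the lower bound $|K|\ge n$ imported from the standard representative count; so the corollary follows immediately, as the text claims.
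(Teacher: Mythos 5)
Your first two steps (transferring path-connectedness via Remark \ref{factsontop}(2)(3), embedding $K$ into the group $F$ of fiber-preserving homeomorphisms of $\bar p(\CM)/E$, and invoking Remark \ref{coveringhomeo}(2) to get $|K|\le |F|\le n$) are correct and are exactly the content the paper's one-line proof is pointing at: the cited ingredients deliver the upper bound $|K|\le n$.

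The gap is in your lower bound. You claim that an automorphism $f$ fixing $a$ and carrying $ac_1$ to $ac_j$ ``represents an element of $K$,'' and you assert explicitly that this does not need abelianness. But $K=\ker\pi$ consists of classes $f/\autf^1(\bar p)$ whose restriction to $p(\CM)$ lies in $\autf^1(p)$, i.e.\ $f$ must fix the Lascar class of \emph{every} realization of $p$, not merely that of $a$. Fixing $a$ gives you only $f(a)\equiv^L a$; without further hypotheses $f$ may permute the Lascar classes of other realizations of $p$ nontrivially, in which case $f/\autf^1(\bar p)\notin K$ at all. This is precisely the point at which the proof of Proposition \ref{abelian} invokes abelianness: Remark \ref{abel.interdef} says that when $\gall^1(p)$ is abelian the Lascar classes are interdefinable, so that $f(a)\equiv^L a$ for one $a$ forces $f(a')\equiv^L a'$ for all $a'\models p$. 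Path-connectedness is not a substitute for this. In group-theoretic terms, $|K|=n$ is equivalent to $K$ acting transitively on the fiber $\delta^{-1}(a/E)$, equivalently to $\pi(\bar G_{\bar x})=G_x$ where $\bar G_{\bar x}$ and $G_x$ are the stabilizers of $ac/E$ and $a/E$; your argument silently assumes this surjectivity. (Indeed, Remark \ref{coveringhomeo} only ever supplies upper bounds on the deck group, and $F$ may contain fiber-preserving homeomorphisms not induced by any element of $K$, so one cannot recover $|K|\ge n$ from the covering-space picture alone.) You need either to restrict to the abelian case, to supply a separate argument that every automorphism fixing $a$ can be modified within its $\autf^1(p)$-class on $p(\CM)$ so as to land in $K$, or to weaken the conclusion to $|K|\le n$.
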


Now we show a purely compact abelian group theoretical result, which implies Corollary \ref{isom}. 

\begin{lemma}\label{tori}
 Let $G$ be an abelian compact connected topological group, and let $F$ be its finite subgroup. Then $G$ and $G/F$ are isomorphic as topological groups.
\end{lemma}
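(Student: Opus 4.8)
The plan is to use Pontryagin duality to transfer the problem to discrete torsion-free abelian groups, where it becomes elementary. Write $\widehat{G}$ for the Pontryagin dual of a locally compact abelian group $G$. Since $G$ is compact, $\widehat{G}$ is discrete; since $G$ is connected, $\widehat{G}$ is torsion-free; and since $G$ is compact (hence $\widehat{G}$ is discrete, so duality is an anti-equivalence on these categories), the finite subgroup $F\leq G$ corresponds under duality to a quotient, namely $\widehat{G/F}$ is (isomorphic to) the subgroup $F^{\perp}=\{\chi\in\widehat{G}\mid \chi|_F=1\}$ of $\widehat G$, and the quotient $\widehat G/F^{\perp}$ is isomorphic to $\widehat{F}$, which is finite of order $|F|$. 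So it suffices to show: if $A$ is a torsion-free abelian group and $B\leq A$ is a subgroup with $A/B$ finite, then $A\cong B$ as (discrete) groups; dualizing that isomorphism gives the desired topological isomorphism $G/F\cong G$.

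First I would reduce to the finitely generated case by a direct-limit / local argument: every element of $A$ lies in a finitely generated subgroup, and the relevant isomorphism can be built compatibly, but more cleanly one argues as follows. Let $m=|A/B|$, so $mA\leq B\leq A$. Consider the multiplication-by-$m$ map $\mu_m\colon A\to A$; since $A$ is torsion-free, $\mu_m$ is injective, so it is an isomorphism of $A$ onto $mA$. Thus $A\cong mA$, and $mA\leq B\leq A$. This reduces the problem to: a subgroup $B$ sandwiched between $A$ and an isomorphic copy $mA$ of $A$ is itself isomorphic to $A$. Equivalently, setting things up symmetrically, $B/mA$ is a finite subgroup of the finite group $A/mA$, and I want $B\cong A$.

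The cleanest way to finish is to invoke the structure theory in the form: the dual statement $G/F\cong G$ for $G$ a compact connected abelian group is exactly the assertion that $\widehat G$, a torsion-free abelian group, satisfies $\widehat G\cong m\widehat G$ for every positive integer $m$ (which we just proved, since $\mu_m$ is injective on a torsion-free group, hence $\widehat G\cong m\widehat G$) — and then one needs that $B$ with $m\widehat G\leq B\leq \widehat G$ also has this property and in fact equals, up to isomorphism, $\widehat G$. For this last point, note $m\widehat G\subseteq B$ means $\widehat G/B$ is a quotient of $\widehat G/m\widehat G$, hence finite, and $B\supseteq m\widehat G\cong \widehat G$; applying the same multiplication argument to $B$, pick $k=|\widehat G/B|$, then $k\widehat G\subseteq B$ and $B\supseteq k\widehat G\cong \widehat G$ while $B\subseteq \widehat G$ and $\widehat G\supseteq k\widehat G$... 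I would actually package this as a two-sided sandwich: we have injections $\widehat G\hookrightarrow B$ (via $\mu_m$ composed with inclusion, using $m\widehat G\subseteq B$) and $B\hookrightarrow \widehat G$ (inclusion). For abelian groups a Cantor--Schröder--Bernstein statement is false in general, so instead I would argue directly that $B\cong\widehat G$: since $\widehat G/B$ is finite, say of exponent $e$, we get $e\widehat G\subseteq B$; then $B = e\widehat G \oplus (B\cap C)$ is not automatic, so rather use that $B$ and $\widehat G$ have the same rank and the same "type" data in the finitely generated localizations — concretely, for each prime $\ell$, $B\otimes\mathbb Z_{(\ell)}=\widehat G\otimes\mathbb Z_{(\ell)}$ whenever $\ell\nmid |\widehat G/B|$, and at the finitely many bad primes one checks the localizations are still isomorphic because passing from $\widehat G$ to a finite-index subgroup and then applying $\mu_{\ell^N}$ recovers everything. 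The main obstacle, and the step I expect to require the most care, is precisely this: proving that a finite-index subgroup $B$ of a torsion-free abelian group $A$ with $A\cong mA$ satisfies $B\cong A$ — the subtlety is that $B\cong A$ is genuinely false for general torsion-free $A$ of infinite rank (finite-index subgroups can change the isomorphism type), so I must use the hypothesis $G$ connected compact in the strong form "$\widehat G$ is torsion-free and divisible-enough", and the right statement to prove is that $\widehat G$, being torsion-free with $\widehat G\cong n\widehat G$ for all $n$, is actually such that every finite-index subgroup is isomorphic to it; I would prove this by showing such $B$ also satisfies $B\cong nB$ for all $n$ and has the same rank, then appeal to the classification of such groups, or — simpler and self-contained — show $m\widehat G\subseteq B\subseteq \widehat G$ together with $m\widehat G\cong\widehat G$ forces $B\cong\widehat G$ by an explicit isomorphism built from the splitting $\widehat G/m\widehat G \cong B/m\widehat G \oplus (\widehat G/B)'$ combined with lifting along the surjection $\mu_m\colon \widehat G\twoheadrightarrow m\widehat G$.
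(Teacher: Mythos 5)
Your reduction via Pontryagin duality is correct and, importantly, it is \emph{lossless}: since $G\mapsto\widehat G$ is a contravariant equivalence, since the duals of compact connected abelian groups are exactly the discrete torsion-free abelian groups, and since finite subgroups $F\leq G$ correspond exactly to finite-index subgroups $F^\perp\leq\widehat G$ (with $\widehat{G/F}\cong F^\perp$ and $\widehat G/F^\perp\cong\widehat F$), the lemma is \emph{equivalent} to the statement you reduce it to, namely that every finite-index subgroup of a torsion-free abelian group is isomorphic to the whole group. The gap in your proposal is that this statement is false, so the step you yourself flag as ``requiring the most care'' cannot be carried out; and there is no extra leverage hidden in ``compact connected'' (no ``divisible-enough'' condition on $\widehat G$), because \emph{every} torsion-free abelian group arises as such a dual. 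A standard counterexample from the theory of almost completely decomposable groups: let $B=\mathbb{Z}[1/3]e_1\oplus\mathbb{Z}[1/5]e_2\subseteq\mathbb{Q}^2$ and $A=B+\mathbb{Z}\cdot\frac{e_1+e_2}{2}$. Then $[A:B]=2$ and $B$ is completely decomposable, but $A$ is not: writing $\tau_1,\tau_2$ for the incomparable types of $\mathbb{Z}[1/3]$ and $\mathbb{Z}[1/5]$, one checks that the set of elements of $A$ of type $\geq\tau_1$ (resp.\ $\geq\tau_2$) is exactly $\mathbb{Z}[1/3]e_1$ (resp.\ $\mathbb{Z}[1/5]e_2$), so any decomposition of $A$ into rank-one summands would force $A\subseteq B$, a contradiction. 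Hence $A\not\cong B$, and your sandwich $2A\leq B\leq A$ with $2A\cong A$ cannot be upgraded to an isomorphism $B\cong A$ (as you half-suspected).

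Because your reduction is an equivalence, this is in fact a counterexample to Lemma \ref{tori} as stated: take $G=\widehat A$ and $F=B^\perp\cong\mathbb{Z}/2\mathbb{Z}$; then $G/F\cong\widehat B$ is a product of a $3$-solenoid and a $5$-solenoid, while $G\not\cong G/F$ since $A\not\cong B$. So the defect is not only in your write-up: the paper's own argument also breaks down, at the point where it takes $S_i$ to be ``the unique one-dimensional subtorus of $G_i$ containing $a_i$'' (a torsion point of a torus of dimension $\geq 2$ lies on many circle subgroups) and where it asserts that $f_{i,i_0}$ maps $S_i$ isomorphically onto $S_{i_0}$ (a surjection of circles carrying an order-$q$ point to an order-$q$ point need not be injective, and its image need not be the chosen $S_{i_0}$); the compatibility of the circles $S_i$ and complements $T_i$ along the bonding maps is exactly what fails for $\widehat A$ above. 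The conclusion does hold in special cases (e.g.\ $G$ a torus, or $\widehat G$ of rank one, which covers the solenoid examples appearing elsewhere in the paper), but as stated the lemma, and hence the deduction of Corollary \ref{isom} from it, needs an additional hypothesis or a different argument. Your instinct that the finite-index step is the crux was exactly right; the correct conclusion to draw is that this step, and with it the lemma, fails in general.
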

\begin{proof} Recall that given a finite  group $F$ and a prime number $q$ dividing
  the order of $F$, there is a subgroup of $F$ of order $q$. 
Hence, we can assume that $F$ has a prime order $q$ (applying the prime order case and 
quotienting out finitely many 
times to obtain the conclusion for any finite $F$).

 We can present $G$ as $\lim_{\longleftarrow I}\limits G_i$
with some directed system $(I,\le)$  and continuous homomorphisms $f_{i,j}:G_i\to G_j$ for  $j\leq i\in I$, where
each $G_i$ is an abelian connected Lie group, hence a torus. We 
can assume each $f_{i,j}$ is surjective. 
Since  $F$ has a prime order, it is generated by a single element, say $a=(a_i)_{i\in I}$.
Also, replacing $I$ by $I_{i_0}:=\{i\in I|\ i_0\le i\}$ for an appropriate $i_0$, we can assume that 
$I$ has the least element $i_0$, and that each $F_i$ (i.e., the projection of $F$ onto the $i$-th coordinate) has order $q$. We have that
$$G/F=\lim_{\longleftarrow I} H_i,$$ where $H_i=G_i/F_i$, with maps 
$k_{i,j}:H_i\to H_j$ induced by $f_{i,j}$.  Moreover, for each $i\in I$, we let  $S_i$ be the 
 unique one-dimensional subtorus  of $G_i$
containing $a_i$.





\begin{claim}\label{decomp1}
We can find 
subtori $T_i< G_i$ , $i\in I$, 
such that:
\begin{enumerate}
\item $G_i$ is the direct sum
 of $T_i$ and $S_i$ (in other words, $T_i$ intersects $S_i$
trivially, and $G_i=T_i+S_i$), and
\item $f_{i,j}[T_i]\subseteq T_j$ for each $j\le i$.
\end{enumerate}

\end{claim}
\begin{proof}

For $i=i_0$, we choose any torus $T_{i_0}$ such that $G_{i_0}=T_{i_0}\oplus S_{i_0}$ 
(we can do this just because
$S_{i_0}$ is a subtorus of $G_{i_0}$).

Now, take any other $i\in I$. 
Put $V_i=f_{i,i_0}^{-1}[T_{i_0}]$. Since both $a_i$ and $a_{i_0}$ have 
order $q$ and $f_{i,i_0}(a_i)=a_{i_0}$, the map $f_{i,i_0}$ maps $S_i$ isomorphically onto
$S_{i_0}$.  
 Hence, $S_i\cap V_i=\{e\}$ (if $x$ was a nontrivial element in 
the intersection, then $f_{i,i_0}(x)$ would be a nontrivial element in $T_{i_0}\cap S_{i_0}$).
Also, for any $x\in G_i$, there are $y\in T_{i_0}$ and $z\in S_{i_0}$ such that
$f_{i,i_0}(x)=y+z$, so choosing $w\in S_{i}$ such that $f_{i,i_0}(w)=z$, we get that
$f_{i,i_0}(x-w)=y\in T_{i_0}$ and $x\in S_i+V_i$. Hence, $G_i$ is the direct sum of
$S_i$ and $V_i$. Note that $V_i$ is a closed (so compact) subgroup of $G_i$. 


Now let $T_i$ be the identity component of the group $V_i$. Then $T_i$ is a torus
of the same dimension as $V_i$  intersecting $S_i$  trivially, so $\dim(T_i+S_i)=\dim(G_i)$. Hence,
by the connectedness of $G_i$, we get that
$G_i=T_i\oplus S_i$. 

To see that $f_{i,j}[T_i]\subseteq T_j$ for  $j\le i$, notice first that
 $f_{i,j}[T_i]\subseteq V_j$.
Now, since $f_{i,j}$ is 
continuous and $T_i$ is connected, $f_{i,j}[T_i]$ is a connected subgroup of 
$G_j$, so it must be contained in the connected component $T_j$ of $V_j$.
This gives the claim.
\end{proof}
Now we will define isomorphisms $g_i$ from $H_i$ to $G_i$ such that, for $j<i$,
$$f_{i,j}g_i=g_jk_{i,j} \ \  (*).$$
(Recall that $k_{i,j}:H_i\to H_j$  is the map induced by $f_{i,j}$.)
Clearly, we can write $H_i=G_i/F_i$ as $T_i\oplus (S_i/\la a_i\ra)$. We define $g_{i}$ to be the identity map on $T_i$, and
to be equal to 
the map $\alpha_q:S_i/\la a_i\ra \to S_i$ induced by multiplication by $q$ of representatives 
modulo
$\la a_i\ra $ on $S_i/\la a_i\ra $, 
and extend additively to a map from $H_i=T_i\oplus (S_i/\la a_i\ra )$ to $T_i \oplus S_i=G_i$.
To check $(*)$, notice that, for $x\in T_i$, both $f_{i,j}g_i(x)$ and $g_jk_{i,j}(x)$ are equal
to $f_{i,j}(x)$, and for $y/\la a_i\ra \in S_i/\la a_i\ra$, we have that  $f_{i,j}g_i(y/\la a_i\ra)$ and $g_jk_{i,j}(y/\la a_i\ra)$ 
are both equal to $qf_{i,j}(y)/\mathbb{Z}$, where $S_j=(\mathbb{R},+)/\mathbb{Z}$.

Now, the system of isomorphisms $g_i$ induces an isomorphism of topological groups $G$ and 
$G/F$.
\end{proof}

\begin{corollary}\label{isom}
Suppose $T$ is  $G$-compact, and
$\gall^1(\bar{p})$ is abelian. Then, $\gall^1(\bar{p})$ and
$\gall^1(p)$ are isomorphic as  topological groups.
\end{corollary}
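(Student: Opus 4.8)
The plan is to combine the three ingredients that are already in place: that $\pi=\pi_1$ has finite kernel (Theorem \ref{$G$-comp}), the abstract quotient fact for continuous surjective homomorphisms of compact groups (Remark \ref{covering}), and the purely group-theoretic Lemma \ref{tori}.

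First I would record the topological setup. Since $T$ is $G$-compact, Fact \ref{rel.lascargp} gives that both $\gall^1(\bar{p})$ and $\gall^1(p)$ are compact connected topological groups; in particular $\gall^1(p)$ is Hausdorff. By hypothesis $\gall^1(\bar{p})$ is moreover abelian. By Theorem \ref{$G$-comp}, the kernel $K$ of the continuous surjective homomorphism $\pi\colon\gall^1(\bar{p})\to\gall^1(p)$ is finite.

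Next, since $\gall^1(\bar{p})$ is compact and $\gall^1(p)$ is Hausdorff, $\pi$ induces an isomorphism of topological groups $\gall^1(\bar{p})/K\cong\gall^1(p)$, by the last part of the second bullet of Remark \ref{covering}. On the other hand, $K$ is a finite subgroup of the compact connected abelian group $\gall^1(\bar{p})$, so Lemma \ref{tori} applies and yields an isomorphism of topological groups $\gall^1(\bar{p})\cong\gall^1(\bar{p})/K$. Composing the two isomorphisms gives $\gall^1(\bar{p})\cong\gall^1(p)$ as topological groups, which is the claim.

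I do not expect a genuine obstacle here, since all the real work is carried out in Theorem \ref{$G$-comp} and in Lemma \ref{tori}. The only point requiring a little care is to check that every hypothesis needed to invoke those results is actually present: compactness and connectedness of $\gall^1(\bar{p})$ (from $G$-compactness via Fact \ref{rel.lascargp}), Hausdorffness of $\gall^1(p)$ (again from $G$-compactness), finiteness of $K$ (Theorem \ref{$G$-comp}), and abelianness of $\gall^1(\bar{p})$. This last hypothesis enters exactly at the step where Lemma \ref{tori} is applied, which explains why it cannot be dropped, in contrast with the examples discussed in Section 3.
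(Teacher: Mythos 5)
Your proposal is correct and follows essentially the same route as the paper's own proof: finiteness of $K$ via Theorem \ref{$G$-comp} (the paper also notes Proposition \ref{abelian} as an alternative), identification of $\gall^1(p)$ with $\gall^1(\bar{p})/K$, and then Lemma \ref{tori}. The extra care you take in verifying compactness, connectedness, and Hausdorffness is implicit in the paper's one-line argument but is exactly the right thing to check.
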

\begin{proof}
By Proposition \ref{abelian} or Theorem \ref{$G$-comp}, $K$ is finite, hence, by Lemma \ref{tori}, the compact connected  group
$\gall^1(p)$ isomorphic
to $\gall^1(\bar{p})/K$, must be isomorphic to $\gall^1(\bar{p})$ as well. 
\end{proof}

Finally, we observe that Corollary \ref{isom} does not generalize to the non-abelian case.

\begin{proposition}\label{ctrexample}
 Let $G$ be a connected compact Lie group, and $N$ its finite normal subgroup.
 Then, one can find types $p=\tp(a)$ and $\bar p=\tp(ac)$ (with $c\in \acl(a)$ finite) in some $G$-compact theory $T$ 
 with $\acl(\emptyset)=\dcl(\emptyset)$ (in $T^{\eq}$) so that $G=\gall^1(\bar p)$ and $G/N=\gall^1(p)$. 
 The same holds for the groups $\gall^{\res}$ and $\gall^{\lambda}$ for any $\lambda$.
 
  If we  take $G = SO(4,\mathbb{R})$, $N = Z(G) = \{I, -I\}$, then  $G/N = PSO(4,\mathbb{R})$
 and $Z(G/N)$ is trivial, so $G$ and $G/N$ are not isomorphic.\footnote{This example  is pointed out to us
 by Prof. Sang-hyun Kim from Seoul National University.}
\end{proposition}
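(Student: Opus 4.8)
The goal is to realize an arbitrary pair $(G, G/N)$ — $G$ a connected compact Lie group, $N\trlt G$ finite — as $(\gall^1(\bar p), \gall^1(p))$ in a $G$-compact theory with $\acl(\emptyset)=\dcl(\emptyset)$. The natural strategy is to mimic the constructions already used in \cite{DKL} (and recalled in the circle-group examples above, like Example \ref{g1neg2} and the double-cover example), where one builds a structure whose automorphism group restricted to the relevant type space is a given compact Lie group acting on a homogeneous space, and where the Lascar-equivalence relation $E$ is exactly the ``infinitesimally close'' relation coming from a family of uniformly definable relations $R_n$. Concretely, I would start from the homogeneous space $G$ acting on itself (or on a suitable quotient), equip it with a $G$-invariant metric, and take the predicates $R_n$ to say ``the distance is at most $1/n$'' (symmetrized appropriately), together with whatever relations are needed to rigidify the space so that $\aut$ of the monster induces exactly $G$ on the $E$-classes. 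The covering $\bar p(\CM)/E \to p(\CM)/E$ should be modeled on the covering $G/N \to G/G = $ a point — no, more carefully: we want $p(\CM)/E$ to be a homogeneous $G/N$-space and $\bar p(\CM)/E$ a homogeneous $G$-space, with the fibers of the restriction map being the $N$-orbits.

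\textbf{Key steps, in order.} First, fix a faithful linear representation of $G$ and a bi-invariant Riemannian metric $d$; let $a$ enumerate a generic point of the space $G$ (as a homogeneous $G$-space) and let $c$ enumerate the finite fiber over $a$ of the quotient map $q\colon G \to G/N$, i.e. the $N$-coset structure — so that $c\in\acl(a)$ and the realizations of $\tp(c/a)$ are in natural bijection with $N$. Second, define the language: sorts for $G$ and for $G/N$, the quotient map $q$ as a function symbol, and for each $n>0$ a relation $R_n$ (on each sort) expressing $d(x,y)\le 1/n$, chosen symmetric. Third, verify by a back-and-forth / quantifier-elimination argument (exactly as sketched for the $M_i$ in \cite{DKL}) that $T=\Th(M)$ is $G$-compact, that in $T^{\eq}$ one has $\acl(\emptyset)=\dcl(\emptyset)$, and that the Lascar-equivalence $E$ on each sort is precisely $F(x,y):=\bigwedge_n R_n(x,y)$, i.e. ``infinitesimally close''. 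Fourth, identify the Lascar groups: show $\aut(p)$ acts on $p(\CM)/E \cong G/N$ as $G/N$ and fixes Lascar types iff it acts trivially there, so $\gall^1(p)=G/N$; similarly $\gall^1(\bar p)=G$; and the projection $\pi$ is identified with $q$, whose kernel is $N$. Fifth, for the last sentence, simply specialize to $G=SO(4,\BR)$, $N=Z(G)=\{I,-I\}$: then $G/N = PSO(4,\BR)$ has trivial center while $Z(SO(4,\BR))=\{I,-I\}$ is nontrivial, so $G\not\cong G/N$, contradicting any hoped-for generalization of Corollary \ref{isom}. The remark about $\gall^{\res}$ and $\gall^{\lambda}$ follows because, for a type of this kind, all the relativized groups coincide (or can be arranged to coincide) by the same analysis — as in Fact \ref{rel.lascargp} — since the only source of Lascar inequivalence is the single relation $F$ on single realizations.

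\textbf{Main obstacle.} The genuinely delicate point is the third step: rigidifying the structure so that the induced automorphism group on $p(\CM)/E$ is \emph{exactly} $G/N$ (not larger, e.g. not all isometries of the metric space, and not smaller), while keeping the theory $G$-compact and keeping $\acl^{\eq}(\emptyset)=\dcl^{\eq}(\emptyset)$. For the circle this was easy because $SO(2,\BR)$ together with the ``orientation'' ternary relation $S_i$ pins down the rotation group; for a general connected compact Lie group $G$ one needs enough definable structure (one can, for instance, use the group multiplication of $G$ as a ternary relation after naming no parameters, but then one must check that no extra definable structure collapses the Lascar group or introduces algebraic constants) — so the careful choice of $M$ and the verification of its model-theoretic properties via back-and-forth is where the real work lies. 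I expect this to be handled by adapting, essentially verbatim, the arguments of \cite{DKL}, which is why the proof text can afford to be brief and cite those arguments rather than redo them.
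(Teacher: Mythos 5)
Your plan correctly identifies the target (realize $(G,G/N)$ as $(\gall^1(\bar p),\gall^1(p))$ with $\pi$ the quotient by $N$) and the final specialization to $G=SO(4,\BR)$, $N=Z(G)$, but the construction you propose does not close the gap you yourself flag as the ``main obstacle,'' and that obstacle is the whole content of the proposition. A language consisting of a metric-ball family $R_n$ on two homogeneous spaces plus the covering map is far too weak: the automorphisms of the monster would induce on $p(\CM)/E$ the full group of fiber-preserving isometries of $(G,d)$, which for a bi-invariant metric is much larger than $G$ acting by translations (already for the circle one needs the orientation relation $S$ to cut $O(2)$ down to $SO(2)$, and there is no analogous single relation for a general nonabelian $G$). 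Your fallback --- naming the group multiplication of $G$ as a ternary relation --- is not available either: a $G$-torsor has no multiplication, and naming the multiplication on $G$ itself makes the identity (and every characteristic subset, e.g.\ torsion elements) $\emptyset$-definable, destroying both the uniqueness of the $1$-type and the condition $\acl(\emptyset)=\dcl(\emptyset)$. So ``adapt \cite{DKL} verbatim'' does not go through.

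The paper resolves this differently, following Ziegler \cite{Z}: it works with a three-sorted structure $\CM=(\CR,X,Y)$ where $\CR$ is an o-minimal expansion of the real field in which $G$ is definable (with $N\subseteq\dcl(\emptyset)$ so that $H=G/N$ is $0$-definable), $X$ and $Y$ are abstract sets carrying regular $G$- and $G/N$-actions, and $\pi:X\to Y$ is the induced covering. The field sort supplies the rigidity: since $Y\subseteq\dcl(X)$ and $(\CR,X,Y)$ is $0$-interpretable in $(\CR,X)$, Ziegler's analysis gives $\Aut(\CM^*)=\{\bar g\bar\phi: g\in G^*,\ \phi\in\Aut(\CR^*)\}$, with $\bar g\bar\phi$ Lascar-strong iff $g\in G^*_{\inf}$. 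From this one reads off directly that $\gall^1$ of the $X$-type is $G$, that of the $Y$-type is $G/N$, that $G$-compactness holds, and (via connectedness of $\gall(T)=G$ and Theorem 21 of \cite{Z}) that $\acl(\emptyset)=\dcl(\emptyset)$. One then takes $c\models q$ in the $X$-sort and $a=\pi(c)$ in the $Y$-sort --- note the direction: $a$ lives in the $G/N$-space and $c$ in the $G$-space, the reverse of what your sketch suggests --- so that $c\in\acl(a)$ by finiteness of the fibers and $ac$ is interdefinable with $c$. You should replace your metric-language construction with this torsor-over-a-field-sort construction; the rest of your outline (identification of $\pi$ with the quotient by $N$, and the $SO(4)$ computation) then stands.
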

\begin{proof} The Lie group 
 $G$ is definable in some o-minimal 
expansion $\CR$ of the ordered
field of real numbers, and (using elimination of imaginaries for o-minimal expansions
of $\mathbb{R}$) we identify $\CR$ with $\CR^{eq}$. Put $H:=G/N$.
We can assume that $N\subseteq \dcl(\emptyset)=\acl(\emptyset)$, so $H$ is $0$-definable.
We consider a structure $\CM=(\CR,X,Y)$, where  $\CR$ comes with its original structure,  $X$ and $Y$ are sets
equipped with a regular $G$-action and a regular $G/N$-action, respectively (both denoted by $\cdot$), and we 
add to the language the map $\pi:X\to Y$, 
defined as follows: fix any elements $x_0\in X$ and $y_0\in Y$, and, for any $g \in G$, we 
set $\pi(g\cdot x_0):=gN\cdot y_0$. Put $T=\Th(\CM)$.

Note that $\CM=(\CR,X,Y)$ is $0$-interpretable in the structure $(\CR,X)$, so
every automorphism of $(\CR,X)$ extends to an automorphism of $\CM$. Moreover, this extension is unique,
as $Y\subseteq \dcl(X)$. The same holds for the saturated extension $\CM^*=(\CR^*,X^*,Y^*)$, hence, by 
\cite[Chapter 7]{Z},
every automorphism of $\CM^*$ is of the form $\bar{g}\bar{\phi}$, where $g\in G^*$ and $\phi\in \Aut(\CR^*)$
(using the notation from \cite{Z}, and identifying an automorphism of $(\CR^*,X^*)$ with its unique extension to $\CM^*$),
and $\gall(T)=G$.

Let $q$ and $p$ be the unique (strong) types of the sorts $X^*$ and $Y^*$, respectively. 
It follows from \cite{Z} that an automorphism $\bar{g}\bar{\phi}$ is Lascar-strong iff $g\in G^*_{\inf}$ (the group of
infinitesimals of $G^*$).
It follows that two elements $x,y\in X^*$ have the same Lascar type iff there is  $g\in G^*_{\inf}$
such that $g\cdot x=y$, and two elements $w,u\in Y$ have the same Lascar type iff there is $g\in G^*_{\inf}$
and $n\in N$ such that $(gn)\cdot w=u$. As all automorphism of the form $\bar{\phi}$ move any $x\in X^{*}$ to 
its translate by some element of $G^{*}_{\inf}$, we conclude easily that 
$$\autf^1(q)=\Autf^{\res}(q)=\{\bar{g}\bar{\phi}:g \in G^*_{\inf}, \phi\in \Aut(R^*)\}$$ and 
$$\Autf^1(p)=\Autf^{\res}(p)=\{\bar{g}\bar{\phi}:g \in G^*_{\inf}N, 
\phi\in \Aut(R^*)\}.$$
Hence, the map $$g\mapsto [\bar{g}_{|X^*}]$$ is an isomorphism $G\to \gall^1(q)=\gall^{\res}(q)$,
and the map $$g\mapsto [\bar{g}_{|Y^*}]$$ is an epimorphism $G\to \gall^1(p)=\gall^{\res}(p)$ with kernel $N$,
so $\gall^1(p)=\gall^{\res}(p)=G/N=H$.

Now, if we take $c\models q$ and $a=\pi(c)\models p$ and  $\bar{p}=\tp(ac)$,
then $c\in \acl(a)$ (as the fibers of $\pi$ are finite), but  the 
Lascar-Galois groups of $p$ are isomorphic to $H$, and we see  (as $ac$ is interdefinable with $c$) that those of
$\bar{p}$ are isomorphic to $G$.

Finally, since $\gall(T)=G$ is connected, its connected component $\gall^0(T)$ is the same as $G$, so we get by Theorem 21 from \cite{Z} that
all algebraic imaginaries are fixed by all automorphisms over $\emptyset$, hence $\acl(\emptyset)=\dcl(\emptyset)$.
\end{proof}

We will see in Theorem \ref{Th:p_lstp_but_aclp_notlstp} that, in a non-$G$-compact $T$,
some $\tp(\acl(a)/\acl^{\eq}(\emptyset))$ fails to be 
a Lascar type while $\tp(a/\acl^{\eq}(\emptyset))$ is a Lascar type.
We will also see in Proposition \ref{solenoid} that, even in a $G$-compact theory,
$\gall^1(\tp(\acl(a)/\acl^{\eq}(\emptyset))$ may be abelian but  not isomorphic to 
$\gall^1(\tp(a/\acl^{\eq}(\emptyset))$.

\section{Examples}

As pointed out in the beginning of Section 2, in \cite{DKL}, a motivating result was  observed that in $G$-compact $T$ if $\stp(a)$ is a Lascar type, then so is $\stp(\acl(a))$. In this section we give an example showing that $T$ being $G$-compact is essential in the result. Moreover, 
in regards to  Corollary \ref{isom}, it is natural to ask whether the premise  conditions  are essential. In Example \ref{ctrexample}, we have already seen that the abelianness assumption cannot be removed,
and we present in this section an example showing that the assumption that $c$ is finite cannot be 
removed either.
Lastly, we find a Lie group structure example answering a question raised in \cite{KL}, which asks the existence of an RN-pattern minimal 2-chain  not equivalent to a Lascar pattern 2-chain having the same boundary.    

We now explain preliminary examples. Throughout this section we will use $\CM, \CN,\dots$ to denote some models which need not be saturated.  
For a positive integer $n$, consider a structure $\CM_{1,n}=(M;S,g_n)$, where $M$ is a unit circle; $S$ is a ternary
relation on $M$ such that $S(a,b,c)$ holds iff $a,b,c$ are distinct and $b$ comes before $c$ going around the circle
clockwise starting at $a$; and $g_n=\sigma_{1/n}$, where $\sigma_r$ is the clockwise rotation
by $2\pi r$-radians.

For $p=\tp(a)$, we will denote $\tp(\acl(a))$ by $\bar{\p}$.

\begin{fact}\cite{CLPZ}\label{preli.example}
\be
        \item  $\Th(\CM_{1,n})$  has the unique 1-complete type $p_n(x)$ over $\emptyset$, which is isolated by the formula $x=x$.

        \item $\Th(\CM_{1,n})$ is $\aleph_0$-categorical and has quantifier-elimination.

        \item For any subset $A\subseteq M_n$, $\acl(A)=\dcl(A)=\bigcup_{0\le
        i<n} g_n^i(A)$ (in the home-sort), where $g_n^i=\underbrace{g_n \circ \cdots \circ g_n}_{i~\text{times}}$.
     	\item The unique 1-complete type $p_n$ is also a Lascar type.
\ee
\end{fact}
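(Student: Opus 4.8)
## Proof proposal for Fact \ref{preli.example}

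The plan is to verify the four items in sequence, relying on a concrete analysis of the structure $\CM_{1,n}=(M;S,g_n)$. The underlying geometry is that of a circle with a distinguished clockwise orientation (encoded by $S$) together with a rational rotation $g_n$ of order $n$; this makes the structure highly homogeneous, which is what drives all four claims.

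For (1), I would argue that any two points $a,a'\in M$ can be carried to one another by an automorphism: since the circle is symmetric, a rotation (or reflection composed with the right map) taking $a$ to $a'$ preserves both $S$ and $g_n$ (rotations commute with $g_n$, and one checks $S$ is rotation-invariant), so $\aut(\CM_{1,n})$ is transitive on $M$. Hence there is a unique complete $1$-type over $\emptyset$, isolated by $x=x$. For (2), the fastest route is a back-and-forth / Ehrenfeucht–Mostowski style argument or a direct quantifier-elimination check: the only atomic data about a tuple $(a_1,\dots,a_k)$ is (i) which coordinates coincide, (ii) the cyclic order of the distinct ones as read off from $S$, and (iii) for each pair $(a_i,a_j)$ whether $a_j=g_n^\ell(a_i)$ for some $\ell<n$. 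These data are finite and can be realized consistently on any circle, so QE holds and the theory is $\aleph_0$-categorical by counting these finitary invariants. Item (3) follows from QE together with the observation that the only $\emptyset$-definable functions are the powers of $g_n$: given $A$, an element $b$ is algebraic over $A$ iff $b$ has only finitely many conjugates over $A$, and the cyclic-order invariant forces $b\in\{g_n^i(a):a\in A,\ i<n\}$ (any other point has infinitely many conjugates by rotating slightly while fixing $A$ pointwise); since $g_n^i$ is $\emptyset$-definable this set is also in $\dcl(A)$, giving $\acl(A)=\dcl(A)=\bigcup_{0\le i<n}g_n^i(A)$.

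For (4), the claim is that $p_n$ is a Lascar type, i.e. any two realizations $a,b$ have the same Lascar strong type. The clean way is to exhibit an indiscernible sequence witnessing small Lascar distance: take a sequence of points on the circle with the same cyclic-order type and no two related by a power of $g_n$ (for instance points at pairwise irrational-angle offsets, or a dense-independent-looking configuration realized in a sufficiently saturated model); by QE such a sequence is indiscernible. Then given $a,b\models p_n$, one finds such an indiscernible sequence $I$ with $aI$ and $bI$ both indiscernible — possible because the cyclic-order and $g_n$-orbit data of a single added point can be matched on either side — so $\ltp(a)=\ltp(b)$ by the Lascar-distance criterion (Fact on Lascar distance). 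Equivalently, since $\acl(\emptyset)=\dcl(\emptyset)=\emptyset$ here and the theory is $\aleph_0$-categorical hence $G$-compact, $\equiv^{KP}$ equals $\equiv^L$ and there is only one $1$-type, so $p_n$ is automatically a Lascar type.

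I expect the main obstacle to be item (4), specifically producing the witnessing indiscernible sequence cleanly: one must be careful that the added point on each side of $a$ versus $b$ can be chosen with matching quantifier-free type over the whole sequence, which requires the sequence to "avoid" the $g_n$-orbits of $a$ and $b$ and to have enough room in the cyclic order. The other three items are essentially bookkeeping once QE is in hand, and QE itself is the standard circle-with-orientation argument; the only mild subtlety there is tracking the extra unary function $g_n$ through the back-and-forth, which is harmless since $g_n$ has finite order and commutes with the relevant symmetries.
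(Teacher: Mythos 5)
The paper gives no proof of this Fact --- it is imported from \cite{CLPZ} --- so I can only judge your argument on its own terms. Items (1)--(3) are essentially right: transitivity of the rotation group gives (1); the back-and-forth on the finitary invariants (equality pattern, cyclic order of the $g_n$-translates, orbit coincidences) gives (2); and (3) follows from QE because the type over $A$ of a point $b\notin\bigcup_{i}g_n^i(A)$ is determined by the open arc of the subdivision by $\bigcup_i g_n^i(A)$ containing $b$, and that arc contains infinitely many realizations. One slip in (3): you cannot ``rotate slightly while fixing $A$ pointwise'' --- a nontrivial rotation fixes no point. You need either the counting argument just given or a piecewise order-preserving permutation of the circle that commutes with $g_n$ and fixes $\bigcup_i g_n^i(A)$ pointwise while moving $b$ inside its arc.

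The genuine gap is in (4). Your main argument asserts that for any $a,b\models p_n$ there is a single indiscernible sequence $I$ with $aI$ and $bI$ both indiscernible, i.e.\ that the Lascar distance is $1$. This is false for large $n$: if $(a,c_1,c_2,\dots)$ is indiscernible then $\tp(ac_1)=\tp(c_1c_2)$, which forces $c_1$ to lie in a fixed arc $(g_n^k(a),g_n^{k+1}(a))$ with the $c_j$ marching through arcs of angular length $2\pi/n$; requiring the same of $(b,c_1,c_2,\dots)$ forces $a$ and $b$ to be within angular distance about $4\pi/n$ of each other. The paper's own Fact \ref{fact:Lascar_distance_M_n} records exactly this obstruction: suitable $a,b\in M_n$ have Lascar distance at least $n/2$. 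The correct argument is a chain of roughly $n$ steps around the circle, each step of angular length less than $2\pi/n$ and witnessed by its own indiscernible sequence; the distance is then bounded by roughly $n$, hence finite, which is all that ``Lascar type'' requires. Your fallback (``$\aleph_0$-categorical hence $G$-compact, so $\equiv^{KP}=\equiv^L$, and there is one $1$-type, done'') also has a hole: a unique $1$-type does not by itself give a unique $\equiv^{KP}$-class on $p_n(\CM)$. It can be repaired --- by Ryll--Nardzewski every bounded $\emptyset$-type-definable equivalence relation on the home sort is definable, hence has finitely many classes, and these are permuted transitively by the divisible group of rotations of the standard circle, so there is exactly one class --- but this step must be supplied.
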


\subsection{Non $G$-compact theory with $p$ Lascar type but $\bar{\p}$ not Lascar type}
We give an example of non $G$-compact theory which has a Lascar strong type $\tp(a)$ but $\tp(\acl(a))$ is not a 
Lascar strong type. Let $\CM=(M_i,S_i,g_i,\pi_i)_{i\ge 1}$ be a multi-sorted structure where $\CM_{1,i}=(M_i,S_i,g_i)$ (as introduced in Fact \ref{preli.example}, but of course 
$M_i$ and $M_j$ are disjoint for $i\ne j$) 
and $\pi_i:\ M_i\rightarrow M_1$ sending $x\mapsto x^i$ for each
$i\ge 1$ (if we identify each $M_i$ with the unit circle in the complex plane). For each
$n\ge 1$, let $\CM_{\le n}=(M_i,S_i,g_i,\pi_i)_{1\leq i\le n}$. 
Let $T=\Th(\CM)$, $T_i=\Th(\CM_{1,i})$, 
and $T_{\le n}=\Th(\CM_{\le n})$.

\begin{remark}\label{RMK:basic_property_theory}
Both $T$ and $T_{\le n}$'s are $\aleph_0$-categorical.
\end{remark}
\begin{remark}\label{RMK:substr_description}
Let $\CN=(N_i,\ldots)$ be a model of $T$. For $A\subseteq \CN$, define $A_1:=\bigcup_{i\ge 1}\limits \pi_i(\bigcup_{j\le i}\limits g_i^j(A\cap N_i) )$, and $\cl(A)=\bigcup_{i\ge 1}\pi_i^{-1}[A_1].$
\begin{enumerate}
	\item $\cl(A)$ is  the smallest substructure containing $A$.
	\item For $B,C\subseteq \CN$ algebraically closed, if $B_1=C_1$, then $B=C$.
\end{enumerate}

\begin{fact}\cite[Theorem 5.5]{DKL}\label{fact:QE_w_cat}
Let $T$ be $\aleph_0$-categorical and let $\CM=(M,\ldots)$ be a saturated model of $T$. Suppose that if $X\subseteq M^1$ is definable over each of two algebraically closed sets $A_0$ and $A_1$, then $X$ is definable over $B:=A_0\cap A_1$.


Then, for any subset $Y$ of $M^n$, if $Y$ is both $A_0$-definable and $A_1$-definable, then
it is $B$-definable. Furthermore, in this case, $T$ has weak elimination of imaginaries.
\end{fact}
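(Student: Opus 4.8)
\emph{The plan} is to separate the two assertions and to regard the $M^n$-clause as the real content, with weak elimination of imaginaries a soft consequence of it. \emph{First, deducing weak elimination of imaginaries from the $M^n$-clause.} By $\aleph_0$-categoricity every imaginary $e$ is interdefinable over $\emptyset$ with the canonical parameter $\ulcorner Y\urcorner$ of some definable $Y\subseteq M^k$ — take for $Y$ a class of the $\emptyset$-definable equivalence relation of which $e$ is a class — and $Y$ is definable over a finite real tuple, hence over a finite algebraically closed set. Let $\CF$ be the family of all finite algebraically closed $A\subseteq M$ over which $Y$ is definable; it is nonempty, and by the $M^k$-clause it is closed under finite intersections. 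Since all finite intersections $A_1\cap\dots\cap A_m$ of members of $\CF$ are subsets of the fixed finite set $A_1$, this collection has a least element $B^{*}$, which is again in $\CF$ and is contained in every member of $\CF$; thus $B^{*}=\bigcap\CF$ is a finite algebraically closed set over which $Y$, and hence $e$, is definable. Conversely, any automorphism fixing $e$ carries each $A\in\CF$ to a member of $\CF$ (as $Y$ remains definable over its image), so it setwise fixes the finite set $B^{*}$; hence $B^{*}\subseteq\acl^{\eq}(e)$. A real tuple enumerating $B^{*}$ therefore witnesses weak elimination of imaginaries for $e$, and $e$ being arbitrary, $T$ has weak elimination of imaginaries.

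\emph{For the $M^n$-clause itself}, one first reduces to finite $A_0,A_1$: if $Y\subseteq M^n$ is definable over algebraically closed $A_0,A_1$, choose finite $A_i'\subseteq A_i$ over which $Y$ is definable and note $\acl(A_0')\cap\acl(A_1')\subseteq A_0\cap A_1$, so it suffices to treat $\acl(A_0'),\acl(A_1')$. With $A_0,A_1$ finite algebraically closed, $\aleph_0$-categoricity turns ``$Y\subseteq M^n$ is definable over $A$'' into ``$Y$ is invariant under $\aut(\CM/A)$''. Writing $B:=A_0\cap A_1$, $G:=\langle\aut(\CM/A_0),\aut(\CM/A_1)\rangle$ and $\bar G$ for its closure in $\aut(\CM)$ (in the topology of pointwise convergence — this does not change orbits on finite tuples), the $M^n$-clause for all $n$ is equivalent to ``$\bar G$ and $\aut(\CM/B)$ have the same orbits on $M^n$ for every $n$'', which by the Galois correspondence between closed subgroups of $\aut(\CM)$ and $\dcl^{\eq}$-closed subsets of $\CM^{\eq}$ amounts to $\dcl^{\eq}(A_0)\cap\dcl^{\eq}(A_1)=\dcl^{\eq}(B)$. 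The hypothesis is exactly the restriction of this last equality to canonical parameters of subsets of $M^1$, i.e.\ ``$\bar G$ and $\aut(\CM/B)$ have the same orbits on $M^1$''. So everything comes down to promoting an equality of one-point orbits to an equality of $n$-point orbits.

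\emph{This I would attempt by induction on $n$.} Given $\bar b=(c,\bar d)$ and $\bar b'=(c',\bar d')$ in $M\times M^{n-1}$ with $\bar b\equiv_B\bar b'$, use the case $n=1$ to produce $g\in G$ with $g(c)=c'$; since $g$ is a product of automorphisms each fixing $A_0$ or $A_1$, it stabilizes every set invariant under both $\aut(\CM/A_i)$, so replacing $\bar b$ by $g(\bar b)$ we may assume $c=c'$ and $\bar d\equiv_{Bc}\bar d'$. One would then like to conclude by applying the inductive hypothesis — with $A_0,A_1$ replaced by $\acl(A_0c),\acl(A_1c)$ — to the fibres over $c$, together with the remark that a chain of automorphisms fixing $c$ and each fixing $\acl(A_0c)$ or $\acl(A_1c)$ lifts to such a chain for the pairs $(c,\bar d)$, $(c,\bar d')$.

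\emph{The main obstacle} is precisely that this fibre argument does not close up: the inductive hypothesis delivers a chain only when $\bar d\equiv_{\acl(A_0c)\cap\acl(A_1c)}\bar d'$, whereas we have only $\bar d\equiv_{Bc}\bar d'$, and $\acl(A_0c)\cap\acl(A_1c)$ can be strictly larger than $\acl(Bc)$ — for instance in the theory of an infinite $\mathbb{F}_2$-vector space, which satisfies the $M^1$-hypothesis, with $A_0=\langle v_1\rangle$, $A_1=\langle v_2\rangle$ and $c=v_1+v_2$ one has $\acl(A_0c)\cap\acl(A_1c)=\langle v_1,v_2\rangle\supsetneq\langle c\rangle=\acl(Bc)$. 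Thus the ``excess'' algebraic parameters attached to the individual fibres must cancel once the fibres are reassembled into $Y$, and capturing this cancellation — equivalently, finding the correct strengthening of the inductive statement that survives adjoining a coordinate, or a global argument that bypasses fibres — is the step I expect to carry the real weight; the remainder is bookkeeping with $\aleph_0$-categoricity and the Galois correspondence.
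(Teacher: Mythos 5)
Your derivation of weak elimination of imaginaries from the $M^n$-clause is essentially correct and is the standard argument: by the $M^n$-clause the family of finite algebraically closed sets over which a given definable set is definable is closed under intersection, hence (being a family of subsets of a fixed finite set) has a least element, which is then fixed setwise by every automorphism fixing the canonical parameter; this yields weak elimination of imaginaries. Your reduction of the $M^n$-clause to finite algebraically closed $A_0,A_1$ and its reformulation as an equality of orbits of $\langle\aut(\CM/A_0),\aut(\CM/A_1)\rangle$ and of $\aut(\CM/B)$ are also sound. (Note that the paper does not prove this statement; it is quoted verbatim from \cite[Theorem 5.5]{DKL}, so there is no in-text proof to compare your route against.)

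However, the proposal is not a proof, and you say so yourself: the entire content of the statement is the promotion of the orbit equality from $M^1$ to $M^n$, and your inductive step does not close. After normalizing the first coordinate you are left with $\bar d\equiv_{Bc}\bar d'$, whereas the inductive hypothesis applied to the pair $\acl(A_0c)$, $\acl(A_1c)$ only applies under the stronger assumption $\bar d\equiv_{\acl(A_0c)\cap\acl(A_1c)}\bar d'$; your $\mathbb{F}_2$-vector-space computation correctly shows that $\acl(A_0c)\cap\acl(A_1c)$ can strictly contain $\acl(Bc)$, so the fibrewise induction is genuinely blocked rather than merely awkward. Since everything preceding this point is routine bookkeeping (Ryll--Nardzewski, the Galois correspondence, passing to finite sets), the missing step \emph{is} the theorem. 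To complete the argument one needs either a strengthened induction hypothesis that survives adjoining a coordinate --- for instance one quantified over all pairs of finite algebraically closed sets simultaneously, with control over how the ``excess'' algebraic points of $\acl(A_ic)$ over $\acl(Bc)$ are permuted --- or a global argument that avoids fibres altogether. As it stands the proposal identifies the obstruction precisely but does not overcome it, so it cannot be accepted as a proof of the stated fact.
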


\end{remark}
\begin{proposition}\label{Prop:T_QE_EI}
\begin{enumerate}
	\item Both $T$ and $T_{\le n}$, $n\ge 1$, have quantifier elimination.
	\item Both $T$ and $T_{\le n}$, $n\ge 1$, weakly eliminate imaginaries.
\end{enumerate} 
\end{proposition}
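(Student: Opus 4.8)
\textbf{Proof proposal for Proposition \ref{Prop:T_QE_EI}.}

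The plan is to handle quantifier elimination (QE) first and then deduce weak elimination of imaginaries (WEI) from Fact \ref{fact:QE_w_cat}. For QE, I would prove it simultaneously for $T$ and all $T_{\le n}$ by a back-and-forth argument, leveraging $\aleph_0$-categoricity (Remark \ref{RMK:basic_property_theory}): it suffices to show that for a saturated (equivalently, by $\aleph_0$-categoricity, sufficiently homogeneous countable) model, any isomorphism between finitely generated substructures extends to an automorphism. By Remark \ref{RMK:substr_description}(1), the substructure generated by a finite set $A$ is $\cl(A)$, which is controlled sort-by-sort: in sort $M_i$ it is a finite union of $g_i$-orbits, and these are tied together across sorts via the maps $\pi_i$ (with $\pi_i$ of degree $i$ onto $M_1$). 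So a finitely generated substructure is essentially a finite ``compatible'' configuration of points on the circles $M_i$ respecting the rotations $g_i$, the cyclic orders $S_i$, and the covering maps $\pi_i$. The key step is: given a partial isomorphism between two such finite configurations, extend it to all of $\CM$ (resp. $\CM_{\le n}$). One extends first on $M_1$ — since $\Th(\CM_{1,1})$ already has QE and is a homogeneous circle with rotations by all rationals $g_i$ (for $T_{\le n}$, only $g_1,\dots,g_n$; but note $\pi_i$ pushes the structure of $M_i$ down), any order- and rotation-preserving finite partial map of $M_1$ extends to an automorphism of the circle — and then lifts compatibly through each $\pi_i$ to $M_i$, again using homogeneity of each $\CM_{1,i}$ and the fact that the fiber of $\pi_i$ over a point is a single $g_i$-orbit coset, on which automorphisms act as we please subject to preserving $S_i$. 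The main obstacle here is the bookkeeping of these compatibility conditions across infinitely many sorts for $T$; one should argue that extending on $M_1$ determines the constraints on each $M_i$ and that these can be satisfied independently for distinct $i$ (since there are no relations linking $M_i$ and $M_j$ directly for $i,j>1$, only through $M_1$), so a straightforward coordinate-wise construction works.

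For part (2), I would apply Fact \ref{fact:QE_w_cat}. Since $T$ (and each $T_{\le n}$) is $\aleph_0$-categorical by Remark \ref{RMK:basic_property_theory}, it remains to verify the hypothesis: if $X\subseteq M^1$ (a subset of a single sort) is definable over each of two algebraically closed sets $A_0, A_1$, then $X$ is definable over $B := A_0\cap A_1$. Here I would use the explicit description of algebraic closure and of definable subsets of a single sort. A $0$-definable subset of $M_i$ is, by QE and $\aleph_0$-categoricity, a Boolean combination of $S_i$-formulas and $\pi_i$-pullbacks; an $A$-definable subset of $M_i$ is such a Boolean combination with parameters, and by Fact \ref{preli.example}(3)-style analysis the canonical parameters needed lie in $\cl(A)$. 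The point is that a definable subset $X$ of $M_i$ ``remembers'' exactly which $g_i$-orbits and which $\pi_i$-fibers it involves, and the minimal such set of orbit-representatives is forced; if $X$ is defined over both $A_0$ and $A_1$, these representatives lie in $(A_0)_1 \cap (A_1)_1$, which by Remark \ref{RMK:substr_description}(2) equals $B_1$, so $X$ is $B$-definable. Then Fact \ref{fact:QE_w_cat} yields both the two-intersection property for all $M^n$ and, directly, WEI.

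I expect the genuinely delicate point to be the single-sort definability analysis needed to check the hypothesis of Fact \ref{fact:QE_w_cat}: one must pin down exactly what a definable subset of $M_i$ looks like and show its ``minimal defining parameter set'' behaves well under intersection of algebraically closed sets. The QE argument itself is a routine (if notationally heavy) back-and-forth, and once the hypothesis of Fact \ref{fact:QE_w_cat} is checked, WEI is immediate. For $T_{\le n}$ the arguments are identical but with only finitely many sorts, so no additional care is needed there.
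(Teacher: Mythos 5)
Your treatment of quantifier elimination is essentially the paper's argument: both reduce to extending a partial embedding one point at a time, first choosing the image of $\pi_i(a)$ in $M_1$ using QE of $T_1$, and then matching the fibers $\pi_i^{-1}(a_1)$ and $\pi_i^{-1}(b_1)$ over $A_i=\pi_i^{-1}[A]$ independently for each $i$ using QE of $T_i$; whether one phrases this as a back-and-forth in the countable homogeneous model or as an extension of partial embeddings between two models is immaterial. So part (1) is fine.

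Part (2) has a genuine gap in how you treat the full theory $T$. You propose to apply Fact \ref{fact:QE_w_cat} directly to $T$, and you remark that the $T_{\le n}$ case needs ``no additional care''; this inverts the actual difficulty. Fact \ref{fact:QE_w_cat} is formulated for an $\aleph_0$-categorical structure with a home sort $M$ (note the hypothesis concerns subsets of $M^1$ and the conclusion subsets of $M^n$). To invoke it one must flatten the sorts into a single universe, which is what the paper does for $\CM_{\le n}$, obtaining the one-sorted, still $\aleph_0$-categorical structure $\CM'_{\le n}$ with universe $M_1\cup\cdots\cup M_n$. For $T$ itself this flattening fails: the universe $\bigcup_{i\ge 1}M_i$ carries infinitely many $1$-types over $\emptyset$ (one per sort), so the flattened theory is not $\aleph_0$-categorical in the sense required by the fact, and your direct application is not licensed. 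The paper instead establishes WEI for each $T_{\le n}$ and then passes to $T=\bigcup_n T_{\le n}$ by a separate limit argument, using that there is no strictly decreasing chain of algebraic closures of finite sets (a consequence of QE and Remark \ref{RMK:substr_description}(1)) and following the proof of Theorem 5.9 of \cite{DKL}. Your proposal is missing this limit step entirely. A secondary, smaller issue: the single-sort verification that a set definable over two algebraically closed $A$ and $B$ is definable over $A\cap B$ is only gestured at in your sketch (``the minimal set of orbit-representatives is forced''); the paper reduces this to the one-circle case and cites the proof of Theorem 4.3(1) of \cite{KKL}, and you would need to either carry out that analysis or cite it as well.
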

\begin{proof}
$(1)$ Here we prove that $T$ has quantifier elimination. By a similar reason, each $T_{\le n}$ has also quantifier elimination. Let $\CN_1=(N_{i}^1,\ldots)$ and $\CN_2=(N_{i}^2,\ldots)$ be a model of $T$. Take finite $A\subseteq \CN_1$, and let $f: \cl(A) \rightarrow \CN_2$ be a partial embedding. Take $a\in \CN_1$ arbitrary. It is enough to show that there is a partial embedding $g :\cl(Aa)\rightarrow \CN_2$ extending $f$. By Remark \ref{RMK:substr_description}(1), we may assume that $A=A_1\subset N_1^1$. Suppose $a\in N_i^1$. Let $a_1=\pi_i(a)$. If $a_1\in A$, then $a\in \acl(A)$, and we are done. 
Without loss of generality, we may assume that $a_1\not\in A$. Since $T_1$ has quantifier elimination, we can pick $b_1\in N_{1}^2$ such that $b_1\models \tp_{T_1}(a_1/A)$. Let $A_i=\pi_i^{-1}[A]\subseteq N_i^1$, which is finite, and let $\a_i=(a_i^1,\ldots,a_i^i)$ be an enumeration of $\pi_i^{-1}(a_1)$. Since each $T_i$ has quantifier elimination, we can pick an enumeration $\b_i=(b_i^1,\ldots,b_i^i)$ of $\pi_i^{-1}(b_1)$ such that $\b_i\models \tp_{T_i}(\a_i/A_i)$. Note that $\cl(Aa)=\cl(A)\cup \bigcup_{i\ge 1}\limits \pi_i^{-1}(a_1)$. Consider a map $g=f\cup\{(a_i^j,b_i^j)|\ 1\le i,1\le j\le i\}$. For each $i$, $g\restriction_{N_i^1}$ is a
partial embedding to $N_i^2$, and for each $j\le i$, $b_1=\pi_i(b_i^j)=\pi_i(g(a_i^j))$. 
Therefore $g:\cl(Aa)\rightarrow \CN_2$ is a partial embedding extending $f$, and we are done. As a consequence, $\acl(A)=\cl(A)$ for each $A$.

$(2)$ We first show that each $T_{\le n}$ weakly eliminates imaginaries. We consider $\CM_{\le n}$ as $\CM_{\le n}'=(M',S',g',\pi')$, where $M'=M_1\cup\cdots \cup M_n$, $ S'=S_1\cup\cdots\cup S_n$, $g'=g_1\cup\cdots\cup g_n$, and $\pi'=\pi_1\cup\cdots\cup \pi_n$. Let $T_{\le n}'=\Th(\CM_{\le n}')$. It is enough to show that $T_{\le n}'$ weakly eliminates imaginaries. Note that $T_{\le n}'$ is $\aleph_0$-categorical. Let $\CN=(N(=N_1\cup\cdots\cup N_n),\ldots)$ be a saturated model of $T_{\le n}'$. By Fact \ref{fact:QE_w_cat},
it is enough to show that a subset $X$ of $N^1$ which is both $A$-definable and $B$-definable, where $A$ and $B$ are algebraically closed, is also definable over $C=A\cap B$. Let $A_i$, $B_i$ and $C_i$ denote the intersections of $A$, $B$, and $C$, respectively, with $N_i$, $1\le i \le n$. Note that $C_i=A_i\cap B_i$ for each $1\le i\le n$ and $C=\acl(C_1)$. We may assume that $X\subseteq N_{i}$ for some $1\le i\le n$ by taking $X\cap N_{i}$. By quantifier elimination, $X$ is definable over $A_{i}$ and $B_{i}$. Then from the proof of \cite[Theorem 4.3(1)]{KKL}, $X$ is definable over $C_{i}=A_{i}\cap B_{i}$ and clearly over $C=\acl(C_i)$. Thus $T_{\le n}'$ weakly eliminates imaginaries and so does $T_{\le n}$.

Let $\CM'$ be a saturated model of $T$ and let $\CM_{\le n}'$ be the saturated structure corresponding to $\CM_{\le n}$. By Remark \ref{RMK:substr_description}(1) and quantifier elimination, there is no strictly decreasing chain of algebraically closures of finite sets in $\CM'$. Since $\CM'=\bigcup_n \CM_{\le n}$, by the same reasoning as 
in the proof of \cite[Theorem 5.9]{DKL}, we conclude that $T$ weakly eliminates imaginaries. 
\end{proof}
Let $\CN=(N_i,\ldots)$ be a saturated model of $T$. By Proposition \ref{Prop:T_QE_EI},
$\acl^{\eq}(\emptyset)=\emptyset$. Let $a\in N_1^1$, and consider strong types
$p:=\tp(a)$ (isolated by $x=x$) and   $\bar{\p}:=\tp(\acl(a))$.

\begin{fact}\cite{CLPZ}\label{fact:Lascar_distance_M_n}
For $a\neq b\in N_n$, $a$ and $b$ have the same type over some elementary
substructure of $N_n$ if and only if $N_n\models S_n(a,b,g_n(a))\wedge S_n(g_n^{-1}(a),b,a)$. So there are $a,b\in N_n$ whose Lascar distance is at least $n/2$. 
\end{fact}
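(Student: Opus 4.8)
The plan is to pin down, for $a\neq b$ in $N_n$, exactly when there is an elementary substructure $M_0\prec N_n$ with $a\equiv_{M_0}b$, and then extract the Lascar‑distance bound. Since $\Th(\CM_{1,n})$ has quantifier elimination and is $\aleph_0$‑categorical (Fact \ref{preli.example}), in the monster model $N_n$ the ternary relation $S_n$ is a dense circular order without endpoints and $g_n$ acts as a fixed‑point‑free order automorphism of order $n$ (``rotation by $1/n$''); by quantifier elimination the type of a single element $a$ over a set $A$ is determined by the circular position of $a$ relative to $A\cup g_nA\cup\cdots\cup g_n^{n-1}A$. I shall also use the standard fact (essentially the definition of $\Autf$) that two tuples have Lascar distance $\le 1$ precisely when they have the same type over some small elementary submodel, so the displayed equivalence is really a description of the relation ``Lascar distance $\le 1$''. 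For the direction $(\Rightarrow)$, suppose $a\equiv_{M_0}b$ with $M_0\prec N_n$. By quantifier elimination and $g_n$‑invariance of $M_0$ this says $a$ and $b$ fill the same cut of $(M_0,S_n)$; since $a\neq b$ and both lie outside $M_0$, they lie in a common \emph{hole}, i.e. a maximal $M_0$‑free arc $(x,y)$. Such a hole has ``length'' $<1/n$: otherwise $g_n(x)\in(x,y)$, and approximating $x$ from below by $c\in M_0$ gives $g_n(c)\in M_0\cap(x,y)$ for $c$ close enough to $x$, contradicting $M_0$‑freeness. Hence $b$ lies in the arc through $a$ between $g_n^{-1}(a)$ and $g_n(a)$, i.e. $b$ satisfies the indicated betweenness condition relative to $a$.

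For $(\Leftarrow)$, suppose $b\neq a$ lies in the arc through $a$ between $g_n^{-1}(a)$ and $g_n(a)$. Choose a closed arc $J$ containing $a$ and $b$ of ``length'' $<1/n$; then $J,g_nJ,\dots,g_n^{n-1}J$ are pairwise disjoint and do not cover $N_n$, so $U:=N_n\setminus\bigcup_{i<n}g_n^iJ$ is a nonempty $g_n$‑invariant open set. Take a countable $g_n$‑invariant $M_0\subseteq U$ that is dense in $U$. Then $(M_0,S_n,g_n)$ is a countable dense circular order without endpoints on which $g_n$ acts freely of order $n$, so $M_0\models\Th(\CM_{1,n})$; being a substructure of $N_n$ in a theory with quantifier elimination, $M_0\prec N_n$. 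Since $a,b\in J$ and no element of $M_0$ separates them, $a$ and $b$ fill the same cut of $M_0$, whence $a\equiv_{M_0}b$ by quantifier elimination. This establishes the equivalence.

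For the final assertion, if the Lascar distance of $a,b$ equals $k$ there are $a=x_0,x_1,\dots,x_k=b$ with consecutive terms of Lascar distance $\le1$, hence of equal type over a model, hence (by the equivalence just proved) $x_{i+1}$ lies in the arc through $x_i$ between $g_n^{-1}(x_i)$ and $g_n(x_i)$, for all $i<k$. An induction — valid while the arcs stay proper, i.e. for indices below $n/2$ — yields $x_i\in(g_n^{-i}(a),g_n^i(a))$. Now fix any $a$ and put $b:=g_n^{\lfloor n/2\rfloor}(a)$; as $b$ is not in $(g_n^{-i}(a),g_n^i(a))$ for any $i<n/2$ (the only orbit points in that arc are $g_n^j(a)$ with $|j|<i$), we must have $k\ge\lceil n/2\rceil\ge n/2$, so such a pair witnesses the claim.

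The main obstacle is the $(\Leftarrow)$ direction: one must build an honest elementary substructure of $N_n$ with a \emph{prescribed} hole, and the point to watch is that models of $\Th(\CM_{1,n})$ are only \emph{internally} dense circular orders, so they genuinely carry holes, but closure under $g_n$ pins the ``length'' of every hole to be strictly less than $1/n$. This ``$<1/n$'' phenomenon, used in both directions, is the real content of the statement; once it is isolated, the circular‑order bookkeeping behind the Lascar‑distance estimate is routine, and only the easy asymptotic range of $n$ matters.
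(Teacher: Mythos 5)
The paper gives no proof of this statement (it is quoted from \cite{CLPZ}), so I can only judge your argument on its own terms. Your overall strategy is the right one, and your forward direction is essentially sound: since $M_0$ is $g_n$-closed and nonempty, no hole of $M_0$ can contain both a point and one of its $g_n$-translates (else the hole would be $g_n$-invariant and would swallow the whole circle), which is exactly the ``every hole has length $<1/n$'' phenomenon you isolate. (You are also right to read the displayed condition as membership in the punctured arc between $g_n^{-1}(a)$ and $g_n(a)$; as literally written with $\wedge$ the two arcs are disjoint, so the condition would be unsatisfiable.) However, two steps do not work as written. First, in the converse direction you ``take a countable $g_n$-invariant $M_0\subseteq U$ that is dense in $U$'': no countable set can be order-dense in $U$, since $U$ is a nonempty open subset of a saturated dense order and therefore contains $|N_n|$-many pairwise disjoint nonempty open arcs. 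What you actually need (and can get) is a countable $g_n$-closed substructure of $U$ that is \emph{dense in itself}, e.g.\ the $g_n$-closure of a copy of a countable dense linear order embedded in one component of $U$; density in $U$ is neither achievable nor relevant, since elementarity of $M_0$ in $N_n$ follows from $M_0\models\Th(\CM_{1,n})$ plus quantifier elimination. Relatedly, ``countable dense circular order with a free order-$n$ automorphism'' does not by itself pin down $\Th(\CM_{1,n})$ for $n\geq 5$ (rotations by $2\pi k/n$ for different $k$ coprime to $n$ are non-isomorphic as such structures); you must also record that the orbit $x,g_n(x),\dots,g_n^{n-1}(x)$ occurs in this clockwise order, which your $M_0$ does inherit from $N_n$ but which your stated criterion omits.

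Second, the bridge to the final assertion has a gap. The paper's Fact on Lascar distance defines one step via a common indiscernible sequence ($aI$ and $bI$ both indiscernible), not via having the same type over a model, and these two one-step relations are not interchangeable ``essentially by definition''; in general they are only known to be comparable up to a factor. To get the stated $n/2$ bound in the metric of the paper's Fact, you need the implication ``$aI$ and $bI$ indiscernible $\Rightarrow$ $b$ lies in the arc $(g_n^{-1}(a),g_n(a))$,'' which in this structure can be checked directly: by quantifier elimination any infinite indiscernible sequence is monotone and confined to an arc of the form $[c,g_n(c))$, and any two points prepending it lie in a common arc of that form. Without this (or some general lemma relating the two metrics), your induction only bounds the model-metric distance, and the literal conclusion about Lascar distance does not follow. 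Both gaps are repairable, but they are genuine as the proof stands.
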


\begin{theorem}\label{Th:p_lstp_but_aclp_notlstp}
The type $p$ is a Lascar strong type but $\bar{\p}$ is not a Lascar strong type.
\end{theorem}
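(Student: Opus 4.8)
The plan is to verify the two assertions separately, with the positive part being routine and the negative part carrying the content. For the positive part, that $p=\tp(a)$ is a Lascar strong type: $a$ lives in the sort $M_1$, and since $T_1=\Th(\CM_{1,1})$ has a unique $1$-type over $\emptyset$ which is a Lascar type by Fact \ref{preli.example}(4), I would argue that any two realizations $a,a'\in N_1^1$ of $p$ are joined by a short Lascar chain already inside the $M_1$-sort. Concretely, by Fact \ref{fact:Lascar_distance_M_n} applied with $n=1$, any two distinct points of $N_1$ have the same type over some elementary substructure of $N_1$ (the conditions $S_1(a,b,g_1(a))$ and $S_1(g_1^{-1}(a),b,a)$ are vacuous when $g_1=\id$), hence Lascar distance $1$ in the reduct $\CN_1$; I then need to observe that a Lascar-strong equivalence witnessed inside the $M_1$-sort lifts to a Lascar-strong equivalence in $\CN$, which follows because an indiscernible sequence in the reduct extends to (or can be taken to come from) an indiscernible sequence in $\CN$ by $\aleph_0$-categoricity and quantifier elimination. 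Thus $a\equiv^L a'$ in $\CN$, so $p$ is a Lascar type.

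For the negative part, that $\bar{\p}=\tp(\acl(a))$ is \emph{not} a Lascar strong type: the key point is that $\acl(a)$ contains, for each $i\ge 1$, the full fiber $\pi_i^{-1}(a)\subseteq M_i$, which is a set of $i$ points forming a single $g_i$-orbit. So naming $\acl(a)$ amounts to naming (an enumeration of) all these fibers simultaneously. I would fix two realizations of $\bar{\p}$, say the closures of $a$ and of $a'$, chosen so that $a$ and $a'$ are themselves Lascar-equivalent in $M_1$ but the induced identification of the fibers $\pi_n^{-1}(a)$ and $\pi_n^{-1}(a')$ is "rotated" — i.e. an automorphism of $\CN$ carrying $\acl(a)$ to $\acl(a')$ must carry $\pi_n^{-1}(a)$ to $\pi_n^{-1}(a')$ and this identification has a well-defined "twist" in $\mathbb{Z}/n\mathbb{Z}$ that is an invariant of the pair. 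The obstruction is then that realizing the same Lascar type would force the Lascar distance between the two enumerated fibers in the sort $M_n$ to be bounded independently of $n$, whereas by Fact \ref{fact:Lascar_distance_M_n} there are pairs in $N_n$ of Lascar distance at least $n/2$, and the non-trivial twist exactly produces such far-apart pairs. Letting $n\to\infty$, no finite bound works, so $\acl(a)$ and $\acl(a')$ cannot have the same Lascar type; equivalently $T$ is not $G$-compact and the relevant $\emptyset$-type-definable equivalence relation separating these closures is coarser than Lascar equivalence.

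I would organize this as follows. First, set up notation for the two candidate realizations of $\bar\p$: pick $a\in N_1^1$ and an automorphism $\sigma$ of $\CN$ with $\sigma(a)$ infinitesimally close to (hence Lascar-equivalent to) $a$ in $M_1$, but arranged — using saturation and the independence of the sorts — so that on the sort $M_n$ the image $\sigma(\acl(a))\cap N_n$ sits at $S_n$-distance $\approx$ (a fixed fraction of $n$) from $\acl(a)\cap N_n$ in the cyclic order; such a $\sigma$ exists because the $g_n$-rotations on different sorts are "independent" in the monster. Second, show the Lascar type of $\acl(a)$ determines, for each $n$, the unordered pair consisting of the two enumerated fibers in $N_n$ up to an automorphism that is Lascar-strong, hence up to bounded Lascar distance in $N_n$; combine with Fact \ref{fact:Lascar_distance_M_n} to get that $\acl(a)\not\equiv^L \sigma(\acl(a))$ once $n$ is large. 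The main obstacle — and the step I would spend the most care on — is the second point: making precise that a hypothetical $f\in\autf(\CN)$ with $f(\acl(a))=\sigma(\acl(a))$ would restrict, on each sort $M_n$, to an automorphism moving a point by bounded Lascar distance uniformly in $n$, which requires knowing that Lascar-strong automorphisms of $\CN$ act "locally" on each sort by infinitesimal rotations only (so the twist cannot be undone), something one extracts from the quantifier elimination and the structure of $\autf$ in this $\aleph_0$-categorical setting much as in \cite{DKL}.
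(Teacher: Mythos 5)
Your proposal follows essentially the same route as the paper: the substantive (negative) half rests on exactly the two ingredients the authors use, namely that the fiber $\pi_n^{-1}(a)$ in each sort $M_n$ can be independently ``rotated'' without changing the type of the enumerated algebraic closure (the paper's observation $(*)$, which you phrase via an automorphism twisting the fibers, realized by composing with the named rotation $g_n$ on the sort $M_n$ alone), combined with the unbounded Lascar diameter of $N_n$ from Fact \ref{fact:Lascar_distance_M_n}, producing two realizations of $\bar{\p}$ whose $M_n$-coordinates have Lascar distance growing with $n$ and which therefore cannot be Lascar equivalent. One small correction on the routine positive half: for $n=1$ the right-hand condition in Fact \ref{fact:Lascar_distance_M_n} is unsatisfiable rather than vacuously true (since $g_1=\id$ makes $S_1(a,b,g_1(a))$ false), so you should instead invoke Fact \ref{preli.example}(4), that $p_1$ is a Lascar type, and then lift to $\CN$ by quantifier elimination as you describe.
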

\begin{proof}
By Fact \ref{fact:Lascar_distance_M_n}, $p$ is a Lascar strong type. Take $a,b\in N_1$ which are Lascar 
equivalent. Let $\a_i=(a_i^1,\ldots,a_i^i)$ and $\b_i=(b_i^1,\ldots,b_i^i)$ be enumerations of
$\pi_i^{-1}(a)$ and $\pi_i^{-1}(b)$. Denote $\sigma_i(a_i)=(a_i^{\sigma(1)},\ldots,a_i^{\sigma(i)})$
for each permutation $\sigma\in S_i$. Let $\tau_i$ be the permutation of $\{1,2,\ldots, i\}$ sending
$j$ to $j+1$ for $j< i$ and $i$ to $1$. 
Then $(a_1,\tau_2^{k_2}(\a_2),\tau_3^{k_3}(\a_3),\ldots)\models \p$ for arbitrary
$k_2,k_3,\ldots$ $(*)$. For each $i$, there are $a_i\in \pi_i^{-1}(a_1)$ and $b_i\in \pi_i^{-1}(b_1)$ 
whose Lascar distance is at least $n/2$. So, by $(*)$, $\bar{\p}$ is not a Lascar strong type.  
\end{proof}

\subsection{A $G$-compact theory with $\gall^1(\bar{\bar{p}})$ abelian and not isomorphic to $\gall^1(p)$}
We will use the symbols $\sigma_r$, $g_n$ and $(M_i, S_i)$ defined in the previous subsection.
Put $\tilde{\CM}=\tilde{\CM}_1=(M_1,S_1,g_{1,n})_{n\geq1}$, where $g_{1,n}$ is the same rotation as $g_n$ on the unit circle $M_1=S^1$. 
We define a multi-sorted structure  $$\CM'=((\tilde{\CM}_k),\delta_k)_{k\geq 1},$$ where each $\tilde{\CM}_k:=
(M_k=\{x_{(k)}:x \in S^1\},S_k,g_{k,n})_{n\geq 1}$ is a copy of
$\tilde{\CM}_1$ (again $g_{k,n}$ is the same rotation as $g_n$ on $M_k$, and we may omit $k$ in $S_k$ and $g_{k,n}$), 
and $\delta_k:\tilde{\CM}_{k+1}\to \tilde{\CM}_k$ is given by 
$\delta_k (x_{(k+1)})=x^2_{(k)}$.
Put $T'=\Th(\CM')$.

\begin{lemma}\label{qe_m'}
 $T'$ admits q.e.
\end{lemma}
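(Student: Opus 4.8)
The plan is to mimic the quantifier elimination proof for the multi-sorted structure $T$ that was carried out in Proposition \ref{Prop:T_QE_EI}(1), adapting it to the inverse-limit system $\CM'=((\tilde\CM_k),\delta_k)_{k\ge 1}$. First I would fix two models $\CN^1,\CN^2\models T'$ with sort-components $(N^1_k),(N^2_k)$, a finite $A\subseteq \CN^1$, and a partial embedding $f\colon \cl(A)\to\CN^2$ (where $\cl$ now denotes the substructure generated under all the $\delta_k$'s and the rotations $g_{k,n}$); by back-and-forth it suffices to extend $f$ to $\cl(Aa)$ for a single new element $a\in N^1_j$. The key structural observation is the analogue of Remark \ref{RMK:substr_description}: because the $\delta_k$ are surjective $2$-to-$1$ maps composed downward, the substructure generated by $A$ is determined by its projection to the bottom sort $M_1$ together with, in each sort $M_k$, the full $\delta$-preimages of those bottom-level points; and within a single sort $M_k$, $\acl$ is $\dcl$ and consists of finite rotation orbits, exactly as in Fact \ref{preli.example}(3). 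So I would reduce to the case $A\subseteq N^1_1$ and $a\in N^1_j$ with $\delta_{1}\circ\cdots\circ\delta_{j-1}(a)=:a_1\in M_1$.

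The main steps then run as follows. If $a_1\in \acl(A)$ already, then $a\in\acl(A)\subseteq\cl(A)$ and there is nothing to do, so assume $a_1\notin\acl(A)$. Using the quantifier elimination of each single-sort theory $\tilde T_k=\Th(\tilde\CM_k)$ — which one gets just as for $\CM_{1,n}$, since $\tilde\CM_k$ is a unit circle with a cyclic betweenness relation and a definable dense subgroup of rotations, so it is $\aleph_0$-categorical with q.e. by a Fraïssé-style argument — I would first choose $b_1\in N^2_1$ realizing $\tp_{\tilde T_1}(a_1/f(A))$. Then, going up the tower one sort at a time, I would use q.e. of $\tilde T_k$ (together with the fact that $\delta_{k}$ is a definable map whose fibers are antipodal pairs, hence a single rotation orbit of the appropriate subgroup) to choose, in each sort $M_k$ of $\CN^2$, the $\delta$-preimage of $b_1$ so that it matches the $\tilde T_k$-type of the $\delta$-preimage of $a_1$ in $\CN^1$ over the already-determined parameters. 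The only information flowing between adjacent sorts is carried by $\delta_k$, and it is purely the "$\delta_k(y)=x$" data plus a choice among two points differing by the half-period rotation; since those two points are conjugate over the lower-sort point, any consistent choice of $\tilde T_k$-type can be realized. Collecting these choices yields a partial embedding $g\colon\cl(Aa)\to\CN^2$ extending $f$, and finiteness of $A$ guarantees the construction terminates; this gives q.e.

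I would expect the main obstacle to be bookkeeping the interaction between the sorts: unlike the $\pi_i\colon M_i\to M_1$ of the previous example, which all land in the same bottom sort, here the maps $\delta_k$ are chained, so a choice made in sort $M_k$ constrains what can be done in $M_{k+1}$, and one must check that compatibility never fails — i.e. that the fiber $\delta_k^{-1}(b)$ over a point $b\in M_k$ always splits into the same $\tilde T_{k+1}$-type pattern as $\delta_k^{-1}(a)$ does in $\CN^1$ once $b\equiv_{\tilde T_k} a$ over the relevant parameters. This is where one genuinely uses that each $\delta_k$ is the \emph{same} squaring map and that the distinguished rotations $g_{k,n}$ are coherent with it (so that $\delta_k\circ g_{k+1,n}$ and $g_{k,n}\circ\delta_k$ are related by a fixed rotation), making the "vertical" data quantifier-free definable. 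Once that coherence is isolated as a lemma, the induction is routine and mirrors Proposition \ref{Prop:T_QE_EI}(1) line for line, with "$\pi_i$" replaced by the composite $\delta_1\circ\cdots\circ\delta_{k-1}$ and "$\pi_i^{-1}(a_1)$" replaced by the corresponding $2^{k-1}$-element fiber.
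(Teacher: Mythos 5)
Your proof is a direct back-and-forth and, in outline, it can be pushed through; but it is a genuinely different route from the paper's. The paper disposes of the lemma in two lines: a formula of $T'$ mentions only finitely many sorts, so it suffices to prove q.e.\ for each finite restriction $\tilde{\CM}_{\leq k}$, and such a restriction is quantifier-free (bi-)interpretable in the single circle $\tilde{\CM}$ with all rotations named (every lower sort is the quotient of the top sort $M_k$ by a named finite rotation subgroup), whose q.e.\ is already established as Theorem 5.8 of \cite{DKL}. Your approach instead redoes the back-and-forth for the whole tower, which forces you to handle exactly the point you flag at the end: when lifting a choice from sort $k$ to sort $k+1$ one must know that the two points of a $\delta_k$-fiber are conjugate over everything below, which amounts to the coherence of the deck transformations (rotations by $2\pi m/2^{j}$ lift compatibly up the tower, so the group $\varprojlim \mathbb{Z}/2^{j}\mathbb{Z}$ acts on fibers over a fixed base point). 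That coherence lemma is real work that the interpretation argument gets for free; on the other hand your argument is self-contained and does not lean on the earlier interpretability machinery. One further caution: your reduction to $A\subseteq N^1_1$ is only a statement about the underlying set $\acl(A)$; the partial embedding still has to be specified on all the fiber elements in the higher sorts, and the inductive choices there are where the content lies. You do appear to track this, so I record it only as a point where the write-up must be careful.

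One factual slip: $\tilde{T}_k=\Th(M_k,S,g_{k,n})_{n\geq 1}$ is \emph{not} $\aleph_0$-categorical. Since all rational rotations are named, the orbit of a point is dense and the type of a second point over it is a cut in $\mathbb{Q}/\mathbb{Z}$, giving continuum many $2$-types (contrast with $\CM_{1,n}$, which names a single rotation and is $\aleph_0$-categorical, as in Fact \ref{preli.example}). This does not sink your argument, because the ingredient you actually need is q.e.\ of the single-sort theory, which holds by \cite[Theorem 5.8]{DKL} and can be proved by the extension-of-partial-embeddings criterion without any categoricity; but the justification as written should be repaired.
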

\begin{proof}
It is enough to prove q.e. for the  restriction of $\CM'$ to finitely many sorts
$\tilde{\CM}_{\leq k}$ for each $k$,
but such a restriction is quantifier-free interpretable in the structure $\CM'$ for some $n$, which is known to admit q.e. 
(\cite[Theorem 5.8]{DKL}).
\end{proof}

Using q.e, it is easy to check that two (possibly infinite) tuples have the same Lascar type iff their corresponding coordinates are
infinitesimally close (in the same sort), and $S$ induces corresponding partial orders on the tuples.
As this is a type-definable condition, we get that $T'$ is $G$-compact.

By Theorem 5.9 from \cite{DKL}, $\tilde{\CM}$, and hence also any $\tilde{\CM}_{\leq k}$, weakly eliminates
imaginaries. Hence, we get:
\begin{remark}\label{wei}
$T'$ weakly eliminates imaginaries.
\end{remark}
By Lemma \ref{qe_m'}, one easily gets that $\acl_{T'}(\emptyset)=\emptyset$ in the home-sort, hence, by Remark \ref{wei}, 
$\acl_{T'}^{\eq}(\emptyset)=\emptyset$.
Now, let $p$ be the (unique) type of an element $a\in \tilde{\CM}$. Then 
$\gall^1(p)\cong \gall(\Th(\tilde{\CM}))\cong S^1$. On the other hand, for $\bar{\p}=\tp(\acl(a))$, we have:

\begin{proposition}\label{solenoid} 
  $\gall^{1}(\bar{\bar{p}})$ is isomorphic to the $2$-solenoid 
  $G:=\varprojlim_i \mathbb{R}/2^i\mathbb{Z}$
  (the projections in the inverse limit are the
natural quotient maps). Hence, $\gall^{1}(\bar{\bar{p}})$ is abelian and non-isomorphic to 
$\gall^{1}(p)$.
\end{proposition}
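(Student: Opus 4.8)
The plan is to compute $\aut(\bar{\bar p})$ and $\autf^1(\bar{\bar p})$ explicitly from the quantifier elimination of $T'$ (Lemma~\ref{qe_m'}) together with the coordinatewise description of Lascar types in $T'$ recorded just after it, and then to recognise the quotient as the inverse limit $G:=\varprojlim_i\mathbb{R}/2^i\mathbb{Z}$. Let $\Gamma$ be the automorphism group of the monster model of $T'$ and write $\tilde{\CM}_k^*$ for its $k$-th sort. First I would note that every point of $\tilde{\CM}_k^*$ is algebraic over its image in $\tilde{\CM}_1^*$ under the composite degree-$2^{k-1}$ map (the fibres of $\delta_{k-1}\circ\cdots\circ\delta_1$ being finite), so $\bigcup_{a'\models p}\acl(a')=\bigcup_k\tilde{\CM}_k^*$; consequently the restriction homomorphism $\Gamma\to\aut(\bar{\bar p})$ is an isomorphism. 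Next, since any realisation of $\bar{\bar p}$ is an enumeration of some $\acl(a')$ with $a'\models p$, and since in $T'$ two (possibly infinite) tuples have the same Lascar type iff their corresponding coordinates are infinitesimally close, I would identify $\autf^1(\bar{\bar p})$ with the subgroup $\{\sigma\in\Gamma:\sigma(x)\approx x\text{ for every }x\}$ of ``everywhere infinitesimal'' automorphisms.

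The core of the argument is a homomorphism $\Phi\colon\Gamma\to G$. Any $\sigma\in\Gamma$ respects the $\emptyset$-type-definable relation $\approx$ and commutes with the rotation functions $g_{k,n}$, hence induces on each quotient $\tilde{\CM}_k^*/{\approx}\cong S^1$ an order-preserving bijection commuting with a dense group of rotations; on the standard circle such a map is necessarily a rotation, by some $\alpha_k\in S^1$. Compatibility of $\sigma$ with the maps $\delta_k$, which descend to $x\mapsto 2x$ on $S^1$, forces $2\alpha_{k+1}=\alpha_k$, so (via the evident rescaling) $(\alpha_k)_k$ determines an element of $G$, and I set $\Phi(\sigma):=(\alpha_k)_k$. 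I would then check that $\ker\Phi$ is exactly the everywhere-infinitesimal subgroup, i.e.\ $\autf^1(\bar{\bar p})$, and that $\Phi$ is onto: given $(\alpha_k)_k\in G$, the tower $(a_k)_k$ of standard $2^{k-1}$-th roots of $a$ and its shift $(a_k+\alpha_k)_k$ have the same quantifier-free, hence complete, type over $\emptyset$, so the map sending one to the other extends to an automorphism $\sigma$ with $\Phi(\sigma)=(\alpha_k)_k$. Hence $\Phi$ descends to a group isomorphism $\gall^1(\bar{\bar p})\cong\Gamma/\ker\Phi\cong G$.

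To upgrade this to an isomorphism of topological groups, I would use that $T'$ is $G$-compact, so $\gall^1(\bar{\bar p})$ is compact (Fact~\ref{rel.lascargp}), while $G$ is compact Hausdorff; it therefore suffices to see that the induced bijection is continuous, which holds because each coordinate $[\sigma]\mapsto\alpha_k$ of $\Phi$ is continuous for the logic topology — the preimage of a closed arc of $S^1$ is cut out by the type-definable condition bounding how far $\sigma$ moves a fixed element of $\acl(a)\cap\tilde{\CM}_k^*$ — just as in the known identification $\gall^1(p)\cong S^1$. The remaining assertions are then immediate: $G$ embeds into $\prod_k S^1$, so is abelian; and $G$ is not isomorphic to $S^1$ since $G$ has no element of order $2$ (if $2\alpha_k=0$ in $S^1$ and $2\alpha_{k+1}=\alpha_k$ for all $k$ then $\alpha_k=\tfrac{1}{2}$ is impossible, forcing every $\alpha_k=0$), whereas $\gall^1(p)\cong S^1$ does.

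I expect the main obstacle to be the first step: identifying $\acl(a)$ correctly in each sort and verifying carefully that the coordinatewise Lascar-type criterion really applies to the infinite enumeration of $\acl(a)$. This is subtle because $\aut$ of a single monster circle is strictly larger than its rotation subgroup, so rotations — and hence the solenoid — only appear after one passes to the quotient by $\approx$. Once the equivalence ``$\sigma$ fixes the Lascar type of $\acl(a')$ for every $a'\models p$ $\iff$ $\sigma$ moves every element infinitesimally'' is in place, the identification of $\gall^1(\bar{\bar p})$ with $G$ follows formally.
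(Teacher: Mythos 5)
Your proof is correct and follows essentially the same route as the paper: both arguments rest on the observations that an automorphism acts on the fibres over $a$ (equivalently, on the quotient circles $\tilde{\CM}_k^*/\!\approx$) as rotations intertwined by the doubling maps $\delta_k$, and that $\autf^1(\bar{\bar p})$ consists exactly of the everywhere-infinitesimal automorphisms. The only difference is cosmetic: you build the isomorphism in the direction $\gall^1(\bar{\bar p})\to G$ by reading off induced rotations and computing the kernel, whereas the paper constructs the inverse map $G\to\gall^1(\bar{\bar p})$ from explicit rotation automorphisms $f_{(r_i)_i}$ and checks injectivity and surjectivity.
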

\begin{proof}
 Consider the homomorphism
$\phi:G=\varprojlim_i \mathbb{R}/2^i\mathbb{Z}\to \gall^{1}(\bar{\bar{p}})$ 
sending a sequence $(r_i/2^i\mathbb{Z})_i$ 
to the class induced by the automorphism $f_{(x_i)_i}$ (defined on a monster model of $T'$) equal
to $\sigma_{r_i/2^i}$ on $\tilde{\CM}_{i}$ for each $i$. If $r_i\notin 2^i\mathbb{Z}$, then $f_{(r_i)_i}$ does not preserve the Lascar types of
elements of $\tilde{\CM}_{i}$, hence $\ker(\phi)=\{0\}$. It remains to check that $\phi$ is surjective.
Consider a class in $\gall^{1}(\bar{\bar{p}})$ induced by an automorhpism $f$. As all elements
of $ \tilde{\CM}_{1}$ have the same Lascar type, we  may assume that
$f(a)=a$ for some $a\in \tilde{\CM}_{1}$ (by composing $f$ with a strong automorphism sending $f(a)$ to $a$).
Now, for each $i$, as $f$ commutes with $\delta_1\delta_2\dots\delta_{i-1}$, it must preserve $\acl(a)\cap \tilde{\CM}_{i}$ setwise.
Hence,  there is $r_i$ such that on $\acl(a)\cap \tilde{\CM}_{i}$, $f$ is equal to 
$\sigma_{r_i/2^i}$. Then $f$ and $f_{(r_i)_i}$ have infinitesimally close values on every element of the monster model,
so $f^{-1}f_{(r_i)_i}$ is Lascar strong and $\phi(f_{(r_i)_i})$ is equal to the class induced by $f$.
\end{proof}

\subsection{RN but not a Lascar pattern $2$-chain}

Let $p$ be a strong type of an algebraically closed set. In \cite{KKL}, it was shown that any $2$-chain in $p$ with 
$1$-shell boundary can be reduced to a $2$-chain having an  
{\em RN-pattern} or an {\em NR-pattern}.\footnote{In this paper,
we change terminology ``$\dots$-type'' to ``$\dots$-pattern'' in order to avoid confusion
with the existing notion of a type - a set  of formulas.} Also for $a,b\models p$, if $a\equiv^L b$, then $(a,b)$ is an endpoint pair of a $1$-shell which is the boundary of a $2$-chain having a {\em Lascar pattern}, a kind of an
RN-pattern. In \cite[Question 4.5]{KL}, it was asked whether all minimal $2$-chains having an RN-pattern with $1$-shell boundary should be equivalent to that having a Lascar pattern. It was also noticed that if {\color{red}there is a counterexample}, then its length should be at least $7$. 

In this subsection, we assume the reader has some familiarity with the  notions described in \cite{KKL, KL},
and we  work with trivial independence to define independent functors in a strong type (see Definition \ref{p-*-functors}), and corresponding $1$-shells and $2$-chains. We now give descriptions of $2$-chains having an RN- or a Lascar pattern in terms of balanced walks, rather than original definitions. For more combinatorial descriptions for RN- or Lascar pattern $2$-chains, see \cite{KL}.

\begin{remark/def}\label{rem/def:balanced chain walk}
Let $a,b\models p$. For $n\ge 1$, a {\em balanced edge-walk of length $2n$ from $a$ to $b$} is a finite sequence $(d_i)_{0\le i\le 2n}$ of realizations of $p$ satisfying the following conditions:
\be
	\item $d_0=a$, and $d_{2n}=b$; and
	\item there is a bijection $$\sigma:\{0,1,\ldots,n-1\}\to\{0,1,\ldots,n-1\}$$ such that
	$d_{2i} d_{2i+1}\equiv d_{2\sigma(i)+2}d_{2\sigma(i)+1}$ for $ i < n$),
\ee
A balanced edge-walk of length $2n$ from $a$ to $b$  induces a $2$-chain of length $2n+1$ (having the $1$-shell boundary whose end point pair is $(a,b)$), which is a chain-walk (c.f. the proof of \cite[Theorem 4.2]{DKL}). Such an induced $2$-chain need not be unique but it is unique up to the first homology class. 

 In particular, given a minimal  $2$-chain $\alpha$ of length $2n+1$ with the $1$-shell boundary $s$,  $\alpha$ is equivalent to a Lascar pattern $2$-chain   iff   $\alpha$ is equivalent to 
 a chain-walk induced by a balanced  edge-walk of length $2n$ from $a$ to $b$ with $\sigma=\id$ such that $[a,b]=[s]\in H_1(p)$.
\end{remark/def}

\begin{Fact}\label{rnpattern}\cite{DKL, KL}
Let $\alpha$ be a {\em minimal} $2$-chain of length $2n+1$ ($n\geq 1$) in $p$ with the
boundary $s=g_{12} -g_{02} + g_{01}$ such that  $\supp(g_{ij})=\{i,j\}$. 
   Then the following are equivalent.
   \be\item The $2$-chain
    $\alpha$  has an RN-pattern.
   
    \item The $2$-chain $\alpha$ is equivalent to a $2$-chain
    $$\alpha'=\sum_{i=0}^{2n}\limits (-1)^i f_i$$
     which is a chain-walk of $2$-simplices  from $g_{01}$ to $-g_{02}$ such that $\Bd^2 f_{0}=g_{01}$, $\Bd^0 f_{2n}=g_{12}$, $\Bd^1(f_{2n})=-g_{02}$ and  $\supp(\alpha')=3=\{0,1,2\}$. 
    
    \item
    There are an endpoint pair $(a,b)$ of $s$, and
    a balanced-edge-walk of length $2n$ from $a$ to $b$ which induces a $2$-chain equivalent to $\alpha$.
\ee 
  \end{Fact}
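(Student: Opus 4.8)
The plan is to prove the equivalence of the three conditions in Fact~\ref{rnpattern} by showing $(1)\Leftrightarrow(2)$ via the combinatorial analysis of RN-patterns from \cite{KKL, KL}, and $(2)\Leftrightarrow(3)$ via the translation between chain-walks of $2$-simplices and balanced edge-walks sketched in Remark/Definition~\ref{rem/def:balanced chain walk}. The logical skeleton is: start from the definition of an RN-pattern as given in \cite{KL}, unwind it to the ``chain-walk from $g_{01}$ to $-g_{02}$'' normal form in (2), and then read off the sequence of endpoints of the constituent $2$-simplices to obtain the balanced edge-walk in (3).

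First I would handle $(1)\Rightarrow(2)$. By the reduction theorem of \cite{KKL} (any $2$-chain with $1$-shell boundary is equivalent to one with an RN- or NR-pattern), and since $\alpha$ is assumed to already have an RN-pattern, I would invoke the structural description of RN-pattern $2$-chains from \cite{KL}: such a chain, after passing to an equivalent one, is a chain-walk $\alpha' = \sum_{i=0}^{2n}(-1)^i f_i$ of $2$-simplices whose successive boundary faces match up, with the two ``ends'' of the walk being precisely the faces $g_{01}$ and $-g_{02}$ of the boundary shell $s$, and with the remaining face $g_{12}$ appearing as $\Bd^0 f_{2n}$. The support condition $\supp(\alpha')=\{0,1,2\}$ is part of the normal form. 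For $(2)\Rightarrow(1)$, I would note that any chain of the displayed form manifestly satisfies the defining combinatorial pattern of an RN-pattern as set out in \cite{KL}, so this direction is essentially a matter of checking the definition.

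Next, $(2)\Leftrightarrow(3)$. Given the chain-walk $\alpha'$ as in (2), each $2$-simplex $f_i$ has domain $\CP(\{0,1,2\})$ and hence (using trivial independence) a representation $[d_i\, d'_i\, d''_i]$; the matching conditions $\Bd^k f_i = \Bd^l f_{i+1}$ force shared edges, and reading the ``free'' vertex along the walk produces a sequence $d_0, d_1, \ldots, d_{2n}$ of realizations of $p$ with $d_0=a$, $d_{2n}=b$ and with the edge-pairing condition $d_{2i}d_{2i+1}\equiv d_{2\sigma(i)+2}d_{2\sigma(i)+1}$ coming from the fact that a $2$-simplex's three edges are pairwise conjugate in the prescribed way; this is exactly a balanced edge-walk of length $2n$ from $a$ to $b$, and $(a,b)$ is the endpoint pair of $s$ by the end conditions in (2). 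Conversely, given a balanced edge-walk, the construction in the proof of \cite[Theorem 4.2]{DKL} (recalled in Remark/Definition~\ref{rem/def:balanced chain walk}) builds a chain-walk of $2$-simplices realizing it, and one checks the end faces are $g_{01}$ and $-g_{02}$ after possibly adjusting by an automorphism and using that the induced $2$-chain is unique up to homology class. The last sentence of Remark/Definition~\ref{rem/def:balanced chain walk} already packages the Lascar-pattern version of this correspondence, so I would model the RN-pattern argument on it.

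The main obstacle I anticipate is bookkeeping rather than conceptual: matching up the orientation conventions (the alternating signs $(-1)^i$, the roles of $\Bd^0, \Bd^1, \Bd^2$, and which face of the shell is which) between the $2$-chain picture and the edge-walk picture, and making sure the ``minimal'' hypothesis is used correctly (it is what guarantees the walk has no redundant back-and-forth steps, so that length $2n+1$ of the chain corresponds to length $2n$ of the edge-walk with the stated $\sigma$). A secondary subtlety is that the induced $2$-chain from an edge-walk is only unique up to homology, so all the equivalences must be stated and proved at the level of homology classes / equivalence of chains, not of literal chains; I would be careful to phrase each step as ``$\alpha$ is equivalent to $\alpha'$'' and invoke Fact~\ref{H1 in amenable family} and the amenability of $\CA^*_p$ to move freely between representatives. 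Since the detailed combinatorics of RN-patterns is developed in \cite{KKL, KL} and the excerpt explicitly permits assuming familiarity with it, I would cite those sources for the pattern-manipulation lemmas rather than reproving them.
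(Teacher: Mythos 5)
The paper does not prove this statement at all: it is labeled as a Fact and imported directly from \cite{DKL, KL}, with the surrounding text explicitly assuming familiarity with the RN-pattern machinery of those papers, so there is no in-paper proof to compare against. Your outline is consistent with that treatment --- you defer the $(1)\Leftrightarrow(2)$ pattern analysis to the same sources and sketch the $(2)\Leftrightarrow(3)$ translation exactly along the lines of Remark/Definition~\ref{rem/def:balanced chain walk} and the proof of Theorem~4.2 in \cite{DKL} --- so it is essentially the same approach, with the substantive combinatorics living in the cited references rather than in either your argument or the paper.
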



In this subsection we will give a minimal $2$-chain example (with 1-shell boundary)  having an  RN-pattern but not equivalent to that having a Lascar pattern. The example will have length $7$. We start with preparatory constructions.
Let $G$ be a connected compact Lie group definable in $\BR$ in the ordered ring language. Let $X$ be a set equipped with a regular $G$-action. Let $\CM_2=(\CR,X,\cdot)$ be a two sorted structure, where $\CR$ is 
the field of real numbers with a named  finite set of elements over which $G$ is definable. Let $\CM_n=(\CM_{1,n}, \CM_2)$, and there is no interaction between $\CM_{1,n}$ and $\CM_2$.

Let $\CN_n^*=(\CM_{1,n}^*,\CM_2^*)$ be a saturated model of $\Th(\CM_n)$ such that $\CM_{1,n}^*=(M^*,S,g_n)$ and $\CM_2^*=(\CR^*,X^*,\cdot)$ are saturated models of $\Th(\CM_{1,n})$ and $\Th(\CM_2)$ respectively. Since there are no other interactions between $\CM_{1,n}^*$ and $\CM_2^*$, 
we have that
\begin{itemize}
	\item $\aut(\CN_n^*)=\aut(\CM_{1,n}^*)\times \aut(\CM_2^*)$.
	\item For $a\in \CM_{1,n}^*$ and $b\in \CM_2^*$, $\acl(ab)=\acl_{\CM_{1,n}}(a)\cup \acl_{\CM_2}(b)$, where $\acl_{\CM_{1,n}}$ and $\acl_{\CM_2}$ are the algebraic closures taken in $\CM_{1,n}^*$ and $\CM_2^*$ respectively.
	\item For $a\in \CM_{1,n}^*$ and $b\in \CM_2^*$, $\tp(ab)=\tp_{\CM_{1,n}}(a)\cup \tp_{\CM_2}(b)$ where $\tp_{\CM_{1,n}}(a)$ and $\tp_{\CM_2}(b)$ are complete types in $\CM_{1,n}^*$ and $\CM_2^*$ respectively.
\end{itemize} 

First, let us recall a fact on minimal lengths of $1$-shells in $p_n$ from \cite{KKL}.
\begin{definition}
Let $\CA$ be a non-trivial amenable collection and let $s$ be a $1$-shell. Define $$B(s) :=\min \{~|\tau|  :  \tau~\text{is a (minimal) 2-chain and}~\partial(\tau)=s ~\}.$$ If $s$ is not the boundary of any $2$-chain,  define $B(s) := - \infty$.
\end{definition}

\begin{fact}\label{lengthbound}
For $n\ge7$, there is a $1$-shell $s$ in $p_n$ such that $B(s)=7$.   
\end{fact}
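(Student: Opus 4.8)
\textbf{Proof proposal for Fact \ref{lengthbound}.}

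The plan is to exhibit a concrete $1$-shell $s$ in $p_n$ (for $n \geq 7$), built from $*$-independent functors with respect to trivial independence as in Definition \ref{p-*-functors}, whose endpoint pair $(a,b)$ consists of two points on the circle $M$ that are Lascar-equivalent but at Lascar distance exactly some value forcing any witnessing balanced walk to be long. First I would recall from Fact \ref{preli.example} and Fact \ref{fact:Lascar_distance_M_n} that in $\CM_{1,n}^*$ two points $a \neq b$ have the same type over an elementary submodel precisely when $b$ lies strictly between $a$ and $g_n(a)$ (and strictly between $g_n^{-1}(a)$ and $a$) going clockwise; iterating, the Lascar distance between $a$ and a point roughly ``halfway around'' the circle is of order $n/2$. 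Since $[a,b] = 0 \in H_1(p_n)$ whenever $a \equiv^L b$ (Fact \ref{H1properties}(1)), such a pair is an endpoint pair of a $1$-shell $s$ that bounds a $2$-chain.

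The key step is the lower bound $B(s) \geq 7$: I would show that any $2$-chain $\tau$ of length $2m+1$ with $\partial\tau = s$ gives, via Fact \ref{rnpattern} and Remark/Definition \ref{rem/def:balanced chain walk} (after reducing to RN- or NR-pattern as done in \cite{KKL}), a balanced edge-walk $(d_i)_{0 \le i \le 2m}$ from $a$ to $b$ in $p_n$; the defining condition $d_{2i}d_{2i+1} \equiv d_{2\sigma(i)+2}d_{2\sigma(i)+1}$ means each ``forward'' edge is matched with a ``return'' edge of the same type, so the net clockwise displacement is controlled: the total displacement from $a$ to $b$ is a bounded sum of signed edge-lengths, and since consecutive realizations must be distinct (trivial independence requires $d_j \neq d_{j+1}$) and each edge can only advance the position by a controlled amount modulo the rotation, one needs enough edges to reach the far-away target $b$. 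Quantitatively, choosing $a,b$ at Lascar distance $\geq n/2 \geq 7/2$ forces $m \geq 3$, i.e. length $\geq 7$. This is essentially the computation carried out in \cite{KKL}, so I would cite it for the precise combinatorial estimate rather than reproduce it.

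The matching upper bound $B(s) \leq 7$ is obtained by explicitly constructing a balanced edge-walk of length $6$ from $a$ to $b$: pick intermediate points $d_0 = a, d_1, \dots, d_6 = b$ on the circle so that the three edges $d_0d_1, d_2d_3, d_4d_5$ all have one prescribed type and $d_1d_2, d_3d_4, d_5d_6$ another (choosing the step sizes so the displacements sum to the clockwise distance from $a$ to $b$), with the bijection $\sigma$ pairing the repeated types; then Fact \ref{rnpattern}(3) $\Rightarrow$(1) produces a minimal $2$-chain of length $7$ with boundary a $1$-shell having endpoint pair $(a,b)$, hence $B(s) = 7$. (One must check this chain is indeed minimal, i.e. not equivalent to a shorter one, which is exactly where the lower bound from the previous paragraph is used.)

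The main obstacle I anticipate is the lower bound: translating ``$\tau$ has small length'' into a genuine geometric constraint on how far a balanced walk can travel on the circle requires carefully unwinding the RN/NR-pattern reduction from \cite{KKL} and keeping track of how the ``balanced'' pairing of edges limits net displacement. Everything else — the existence of the bounding $2$-chain, the $G$-compactness-free setup with trivial independence, and the explicit length-$6$ walk — is routine once the combinatorial framework of \cite{KKL, KL} is in place.
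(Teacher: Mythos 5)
First, a remark on what you are comparing against: the paper does not prove this statement at all. It is imported verbatim from \cite{KKL}, and the only in-paper content is the sentence immediately after it, observing that the result there is proved for $1$-shells and $2$-chains under $\acl$-independence and that "by similar methods" it transfers to trivial independence. So your sketch can only be judged on its own terms, and on those terms it has a genuine quantitative flaw in the choice of endpoint pair, which breaks both halves of your argument.

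Here is the problem. In $\CM_{1,n}^*$, quantifier elimination shows that the complete type of a pair $(x,y)$ determines the clockwise angle from $x$ to $y$ to within one sector of width $2\pi/n$ (it records between which consecutive points of the $g_n$-orbit of $x$ the point $y$ sits). In a balanced edge-walk of length $2m$, each edge $d_{2i}d_{2i+1}$ is matched with the \emph{reverse} of an edge of the same type, so each matched pair contributes a net clockwise displacement of absolute value strictly less than $2\pi/n$; hence the walk can only join points whose clockwise distance lies in $(-2m\pi/n,\,2m\pi/n)$ modulo $2\pi$. This is exactly the displacement constraint you gesture at, but it cuts against your construction: if you take $a,b$ at Lascar distance about $n/2$ (angular distance about $\pi$), then any balanced edge-walk joining them needs $m>n/2$ matched pairs, so the minimal $2$-chain length grows linearly in $n$ rather than equalling $7$; and, worse, your proposed upper bound is false for that shell, since a length-$6$ walk ($m=3$) only reaches points at angular distance less than $6\pi/n<\pi$ for $n\ge 7$, so no such walk can terminate at a nearly antipodal $b$. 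What the equality $B(s)=7$ actually requires is an endpoint pair at a \emph{fixed small} separation --- three sectors apart, reachable with $m=3$ matched pairs but not with $m\le 2$ --- and the role of the hypothesis $n\ge 7$ is to guarantee that such a pair exists with that separation not short-circuited by going the other way around the circle (together with the elimination of lengths $3$ and $5$ carried out in \cite{KKL}); it has nothing to do with the inequality $n/2\ge 7/2$. A minor additional slip: trivial independence imposes no constraint whatsoever on the vertices of a simplex, so it does not force $d_j\neq d_{j+1}$ as you assert.
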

\noindent Note that in \cite{KKL}, the authors considered $1$-shells and $2$-chains defined under 
$\acl$-independence for Fact \ref{lengthbound}. By similar methods as in the proofs, it is not hard to see that the same Fact \ref{lengthbound} holds for $1$-shells and $2$-chains defined under trivial independence as in this subsection.

Next, let us recall some facts about $\CM_2^*$ from \cite{Z}. Let $G^*$ be the extension of $G$ in $\CR^*$. Let $\mu$ be the normal subgroup of infinitesimals in $G^*$. We fix a base point $x_0\in X^*$ so that for $x\in X^*$ there is a unique element $h\in G^*$ with $x=h\cdot x_0$.
\begin{fact}\label{facts_auto_(R,X)}
\begin{enumerate}
	\item Each $\phi\in \aut(\CR^*)$ is extended to $\aut(\CM^*)$ defined by $\bar{\phi}(h\cdot x_0):=\phi(h)\cdot x_0$. An automorphism which fixes $\CR^*$ pointwise is of the form $\bar{g}$ for some $g\in G^*$, where $\bar{g}(h\cdot x_0)=(hg^{-1})\cdot x_0$, and the commutation rule is given by $\bar{\phi}\bar{g}=\ov{\phi(g)}\bar{\phi}$. So we have $\aut(\CM^*)=\aut(\CR^*)\ltimes G^*$.
	\item An automorphism $\Phi=\bar{\phi}\bar{g}$ is a strong automorphism if and only if $g$ is an infinitesimal. So the Lascar Galois group of $\Th(\CM)$ is isomorphic to $G$.
	\item $\mu=\{h^{-1}\phi(h)|\ h\in G^*,\ \phi\in \aut(\CR^*)\}$.
\end{enumerate}
\end{fact}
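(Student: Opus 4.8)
Fact \ref{facts_auto_(R,X)} is a recollection of known results about the automorphism group of the structure $\CM_2^* = (\CR^*, X^*, \cdot)$ from \cite{Z}, so I will treat the proof as an assembly of standard facts rather than something requiring new ideas.
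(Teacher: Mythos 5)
Your proposal matches the paper exactly: the statement is labeled a Fact, is prefaced in the text by ``let us recall some facts about $\CM_2^*$ from \cite{Z}'', and no proof is supplied in the paper itself. Deferring to the known results of \cite{Z} on the automorphism group of $(\CR^*,X^*,\cdot)$ is precisely what the authors do.
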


\noindent We note that in $X^*$, there is a unique $1$-type over $\emptyset$ and it is also a strong type over $\emptyset$.
\begin{proposition}\label{stp_in_X}
For any $x\in X$, $x\equiv x_0$ and $\tp(x_0)=\stp(x_0)$.
\end{proposition}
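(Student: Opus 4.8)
The plan is to exploit the complete lack of interaction between the two sorts in $\CM_2^\ast=(\CR^\ast,X^\ast,\cdot)$ together with the transitivity of the $G^\ast$-action on $X^\ast$. Concretely, the statement has two halves: that all elements of $X$ realize the same $1$-type over $\emptyset$, and that this type is in fact a strong type, i.e.\ that $x_0$ is not in $\acl^{\eq}(\emptyset)$-equivalence with any distinct element via a $0$-type-definable finite equivalence relation.

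First I would handle the $1$-type statement. Fix $x\in X$; since the $G$-action is regular, there is a (unique) $h\in G$ with $x=h\cdot x_0$. By Fact \ref{facts_auto_(R,X)}(1), every $g\in G$ extends to the automorphism $\bar g$ of $\CM_2^\ast$ fixing $\CR^\ast$ pointwise, and $\bar g(x_0)=g^{-1}\cdot x_0$. Taking $g=h^{-1}$ gives an automorphism sending $x_0$ to $h\cdot x_0=x$, so $x\equiv x_0$. (If one wants this inside the full $\CN^\ast_n$, one uses $\aut(\CN_n^\ast)=\aut(\CM_{1,n}^\ast)\times\aut(\CM_2^\ast)$ and takes the identity on the first factor.) Hence $X$ carries a unique complete $1$-type over $\emptyset$.

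Next I would argue it is a strong type, i.e.\ $\stp(x_0)=\tp(x_0)$. Because there are no interactions between $\CM_{1,n}^\ast$ and $\CM_2^\ast$, every $\emptyset$-definable (indeed $\emptyset$-type-definable) subset of $X^\ast$ is already definable in the reduct $\CM_2^\ast$, and by weak elimination of imaginaries for o-minimal $\CR$ combined with the fact that $X^\ast$ is a single homogeneous space, $\acl^{\eq}_{\CM_2}(\emptyset)$ contributes nothing new on the sort $X^\ast$. So it suffices to show that any $\emptyset$-definable (equivalently, finite) equivalence relation $E$ on $X^\ast$ is trivial. Given such an $E$, its classes form a $G^\ast$-invariant partition of $X^\ast$ into finitely many pieces (invariance because each $\bar g$ is an automorphism fixing $\CR^\ast$, hence fixing $E$ setwise); but $G^\ast$ acts transitively on $X^\ast$, so the only such partition is the trivial one (one class). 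Therefore $x_0$ lies in no nontrivial bounded $\emptyset$-type-definable equivalence relation on its sort, which is exactly $\tp(x_0)=\stp(x_0)$.

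The main obstacle, such as it is, is bookkeeping about imaginaries: one must be careful that passing to $\CM^{\eq}$ (and to $\CN^{\eq}_n$) does not secretly create a new $\emptyset$-definable equivalence relation on $X^\ast$ coming from the $\CR$-sort or from the product structure. This is dispatched by the weak elimination of imaginaries already recorded for the relevant structures and by the observation that $Y:=X^\ast$ has no $\emptyset$-definable structure beyond the regular $G^\ast$-action; equivalently $\aut(\CM_2^\ast/\CR^\ast)\cong G^\ast$ acts transitively on $X^\ast$, which forces any finite $\emptyset$-invariant equivalence relation to be trivial. Once that point is granted, both assertions follow immediately from transitivity of the action, so I would keep the write-up short, citing Fact \ref{facts_auto_(R,X)} for the description of $\aut(\CM_2^\ast)$.
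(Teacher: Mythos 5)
Your first half (all elements of $X^*$ realize the same type, via the automorphisms $\bar g$ and transitivity of the regular action) is fine and matches what the paper takes for granted. The problem is in the second half. You claim that because $G^*$ acts transitively on $X^*$, any $G^*$-invariant partition of $X^*$ into finitely many classes must be trivial. That implication is false: invariance of $E$ under each $\bar g$ only means that $G^*$ \emph{permutes} the $E$-classes, and a transitive action can perfectly well admit a nontrivial invariant block system (e.g.\ a group acting on itself by translation preserves the partition into cosets of any finite-index subgroup). Transitivity alone gives you that all classes are conjugate, not that there is only one. Notice that your argument never uses the hypothesis that $G$ is \emph{connected}, which is exactly the hypothesis doing the work here; an argument for this proposition that does not invoke connectedness cannot be correct, since for a disconnected $G$ (say $O(2)$ acting regularly on itself) the conclusion fails.

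The paper's proof supplies the missing step: given a finite $\emptyset$-definable equivalence relation $E$ separating two points of $X^*$, set $H:=\{g\in G^*\mid E(x,g\cdot x)\}$. Then $H$ is a proper $\emptyset$-definable subgroup of $G^*$, and the map $gH\mapsto (g\cdot x)/E$ is a bijection $G^*/H\to X^*/E$ by transitivity, so $H$ has finite index. A proper definable (hence closed, hence open) subgroup of finite index contradicts the connectedness of $G$. You should replace your ``invariant partition must be trivial'' step with this passage from the block system to the finite-index stabilizer subgroup and the appeal to connectedness.
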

\begin{proof}
Suppose there are $x,x'\in X^*$ with $\stp(x)\neq\stp(x')$. Then there is a $\emptyset$-definable finite equivalence relation $E$ on $X^*$ such that $\neg E(x,x')$. Define $H:=\{g\in G^*|\ E(x,g\cdot x)\}$. Since $E$ is $\emptyset$-definable, $H$ is $\emptyset$-definable subgroup of $G^*$, 
and it is proper. We claim that $[G^*:H]$ is finite. Note that $gg^{-1}\in H$ if and only if $E(g\cdot x,g'\cdot x)$. So the map sending $gH\in G^*/H\mapsto g(x)E\in X^*/E$ is a bijection because $G^*$ acts on $X^*$ transitively. Since $X^*/E$ is finite, so is $G^*/H$, contradicting the connectedness of $G$.
\end{proof}

\noindent Let $\acl_{\CR^*}^{\eq}(\emptyset)$ be the algebraic closure of $\emptyset$ in $(\CR^*)^{\eq}$. For $x\in X^*$, $\acl^{\eq}(x)=\{x\}\cup\acl_{\CR^*}^{\eq}(\emptyset)$. From Proposition \ref{stp_in_X}, we have that $\tp(x)\models \stp(\acl^{\eq}(x))$. So we say $x_1$ and $x_2$ are endpoints if $\acl^{\eq}(x_1)$ and $\acl^{\eq}(x_2)$ are endpoints of a $1$-shell in $p:=\stp(\acl^{\eq}(x))$.

\begin{lemma}\label{infinitesimal_1step_lascar}
For $x_1, x_2, x_3\in X^*$, if $x_1x_2\equiv x_3x_2$, then there is $h\in \mu$ such that $\bar{h}(x_1)=x_3$.
\end{lemma}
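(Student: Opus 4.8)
The plan is to translate the hypothesis into an equation inside the explicit description of $\aut(\CM_2^*)$ furnished by Fact \ref{facts_auto_(R,X)}, and then simply solve for the desired infinitesimal. First I would observe that, since the two sorts of $\CN_n^*$ do not interact and $x_1,x_2,x_3\in X^*$, the equality $x_1x_2\equiv x_3x_2$ is witnessed by an automorphism of the saturated (hence strongly homogeneous) structure $\CM_2^*$; so fix $\Phi\in\aut(\CM_2^*)$ with $\Phi(x_1)=x_3$ and $\Phi(x_2)=x_2$. By Fact \ref{facts_auto_(R,X)}(1) write $\Phi=\bar\phi\bar g$ with $\phi\in\aut(\CR^*)$, $g\in G^*$, and by regularity of the $G^*$-action on $X^*$ write $x_i=h_i\cdot x_0$ for the unique $h_i\in G^*$.

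The computation is then routine. From $\bar\phi\bar g(h\cdot x_0)=\phi(h)\phi(g)^{-1}\cdot x_0$ and regularity, $\Phi(x_2)=x_2$ forces $\phi(g)=h_2^{-1}\phi(h_2)$, whence $\Phi(x_1)=x_3$ forces $h_3=\phi(h_1)\phi(g)^{-1}=\phi(h_1)\phi(h_2)^{-1}h_2$. Since $\bar h(h_1\cdot x_0)=(h_1h^{-1})\cdot x_0$, the element $h$ we seek must satisfy $h_1h^{-1}=h_3$, i.e.\ $h=h_3^{-1}h_1=h_2^{-1}\phi(h_2)\phi(h_1)^{-1}h_1=\bigl(h_2^{-1}\phi(h_2)\bigr)\bigl(h_1^{-1}\phi(h_1)\bigr)^{-1}$. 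By Fact \ref{facts_auto_(R,X)}(3) each of $h_2^{-1}\phi(h_2)$ and $h_1^{-1}\phi(h_1)$ lies in $\mu$, and as $\mu$ is a subgroup of $G^*$ we get $h\in\mu$; this $h$ works, proving the lemma.

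I do not expect a genuine obstacle: once the semidirect-product presentation $\aut(\CM_2^*)=\aut(\CR^*)\ltimes G^*$ is in hand, the statement is essentially a one-line calculation. The only points requiring care are the left/right conventions in the action and commutation rules of Fact \ref{facts_auto_(R,X)}(1), and invoking that $\mu$ is closed under products and inverses --- which is immediate from $\mu$ being the group of infinitesimals of $G^*$ and is consistent with its set description in Fact \ref{facts_auto_(R,X)}(3).
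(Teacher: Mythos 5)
Your proposal is correct and follows essentially the same route as the paper: write $x_i=h_i\cdot x_0$, decompose the witnessing automorphism via $\aut(\CM_2^*)=\aut(\CR^*)\ltimes G^*$, use regularity to extract the two equations, and identify the resulting element of $G^*$ as a product of elements of the form $k^{-1}\phi(k)\in\mu$. The only cosmetic difference is that you group the factors so that only closure of $\mu$ under products and inverses is needed, while the paper computes modulo $\mu$ using its normality; both are fine.
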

\begin{proof}
Let $x_1,x_2,x_3\in X^*$ be such that $x_1x_2\equiv x_3x_2$. Let $h_1,h_2,h_3\in G^*$ be such that $x_i=h_i\cdot x_0$. It is enough to show that $h_3h_1^{-1}\in\mu$.

Take $\phi \in \aut(\CR^*)$ and $g\in G^*$ such that $\bar g \bar \phi(x_1x_2)=x_3x_2$. Then we have
\begin{align*}
\bar g \bar \phi(x_1x_2) &=\bar g((\phi(h_1)\cdot x_0) (\phi(h_2) \cdot x_0))\\
&=(\phi(h_1)g^{-1}\cdot x_0)(\phi(h_2)g^{-1}\cdot x_0)\\
&=(h_3\cdot x_0)(h_2\cdot x_0). 
\end{align*}
By the regularity of the action of $G^*$, we have $\phi(h_1)g^{-1}=h_3$ and $\phi(h_2)g^{-1}=h_2$, and so $g=h_2^{-1}\phi(h_2)\in \mu$. And $h_3h_1^{-1}=\phi(h_1)g^{-1}h_1^{-1}=\phi(h_1)h_1^{-1}=\id$ modulo $\mu$ because $\mu$ is a normal subgroup of $G^*$. Thus $h_3h_1^{-1}\in \mu$.
\end{proof}

\begin{theorem}\label{classification_lascar_type}
For $x_1,x_2\in X^*$, the following are equivalent:
\begin{enumerate}
	\item $x_1\equiv^{L}x_2$.
	\item There is $h\in \mu$ such that $\ov h(x_1)=x_2$.
	\item $x_1$ and $x_2$ are endpoints of a $1$-shell which is the boundary of a $2$-chain in $p$ having a Lascar pattern.
\end{enumerate}
\end{theorem}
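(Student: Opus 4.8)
The strategy is a cyclic implication $(1)\Rightarrow(2)\Rightarrow(3)\Rightarrow(1)$. The first implication $(1)\Rightarrow(2)$ is the substantive direction: I would unwind the definition of having the same Lascar type into a finite chain of one-step Lascar equivalences and then reduce to \textbf{Lemma \ref{infinitesimal_1step_lascar}}. Concretely, if $x_1\equiv^L x_2$ then there are finitely many indiscernible sequences joining $x_1$ to $x_2$ (by the Fact characterizing finite Lascar distance); at each step we pass from some $y$ to $y'$ with $yy''\equiv y'y''$ for a suitable witness $y''$ (two elements on a common indiscernible sequence have the same type over a third element of that sequence). By Lemma \ref{infinitesimal_1step_lascar}, each such step is realized by some $\bar h$ with $h\in\mu$, and since $\mu$ is a (normal) subgroup of $G^*$, composing the finitely many infinitesimal elements gives a single $h\in\mu$ with $\bar h(x_1)=x_2$. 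A small point to check carefully is that the intermediate elements can be taken in $X^*$ and that Lemma \ref{infinitesimal_1step_lascar}'s hypothesis is met at each step; this is where I expect the bookkeeping, rather than any real difficulty, to lie.

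\medskip

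For $(2)\Rightarrow(1)$, which I would fold into the cycle's closing step rather than prove separately: by \textbf{Fact \ref{facts_auto_(R,X)}(3)}, $\mu=\{h^{-1}\phi(h)\mid h\in G^*,\ \phi\in\aut(\CR^*)\}$, and by \textbf{Fact \ref{facts_auto_(R,X)}(2)} an automorphism $\bar\phi\bar g$ is Lascar-strong exactly when $g\in\mu$; so an element $\bar h$ with $h\in\mu$ is itself a Lascar-strong automorphism, whence $\bar h(x_1)=x_2$ forces $x_1\equiv^L x_2$. Thus $(2)\Rightarrow(1)$ is essentially immediate from the cited facts.

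\medskip

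The implication $(3)\Rightarrow(1)$ is general and was recorded already: by the last clause of \textbf{Fact \ref{H1properties}(1)}, if $x_1\equiv^L x_2$ then $[x_1,x_2]=0$, but here we need the converse along a \emph{Lascar-pattern} chain, which is exactly the content of the Lascar-pattern case — a balanced edge-walk with $\sigma=\id$ from $x_1$ to $x_2$ (via \textbf{Fact \ref{rnpattern}} and \textbf{Remark/Definition \ref{rem/def:balanced chain walk}}) consists of steps $d_{2i}d_{2i+1}\equiv d_{2i+2}d_{2i+1}$, i.e. one-step Lascar equivalences over a common third element, and these compose (again using Lemma \ref{infinitesimal_1step_lascar} or just the definition of $\autf$) to give $x_1\equiv^L x_2$. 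The remaining implication $(1)\Rightarrow(3)$ is the statement that $(x_1,x_2)$ with $x_1\equiv^L x_2$ is an endpoint pair of a $1$-shell bounding a Lascar-pattern $2$-chain; this is the general construction recalled in the paragraph before \textbf{Remark/Definition \ref{rem/def:balanced chain walk}} (valid for the strong type $p=\stp(\acl^{\eq}(x))$ of an algebraically closed set, which is our situation by \textbf{Proposition \ref{stp_in_X}} and the discussion following it), so I would simply cite it.

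\medskip

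The main obstacle is the inductive/compositional step in $(1)\Rightarrow(2)$: making sure that a finite Lascar-distance witness in the two-sorted structure $\CN_n^*$ restricts, on the $X^*$-sort, to a sequence of one-step moves each satisfying the hypothesis of Lemma \ref{infinitesimal_1step_lascar}, so that the product of infinitesimals stays infinitesimal. Once that is set up, the rest is a matter of invoking \textbf{Fact \ref{facts_auto_(R,X)}} and the earlier general homological facts.
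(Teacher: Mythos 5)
Your proposal is correct and follows essentially the same route as the paper: the paper cites \cite{Z} for $(1)\Leftrightarrow(2)$ and \cite{KKL} for $(1)\Rightarrow(3)$, and closes the loop by deducing $(3)\Rightarrow(2)$ from the definition of a Lascar-pattern $2$-chain together with Lemma \ref{infinitesimal_1step_lascar} --- which is exactly the content of your $(3)\Rightarrow(1)$ step combined with your $(1)\Leftrightarrow(2)$. One small caution: in your $(3)\Rightarrow(1)$ step the alternative ``or just the definition of $\autf$'' does not suffice, since equality of types over a common third element (which here is not a model) does not by itself yield Lascar equivalence; you genuinely need Lemma \ref{infinitesimal_1step_lascar} there, as the paper does.
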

\begin{proof}
$(1) \Leftrightarrow (2)$ was proved in \cite{Z} and $(1)\Rightarrow(3)$ was proved 
in \cite{KKL}. It is enough to show $(3)\Rightarrow(2)$. By definition of Lascar pattern
 2-chain (in \cite{KKL}) and Lemma \ref{infinitesimal_1step_lascar}, $(3)\Rightarrow(2)$ is also true.
\end{proof}

Now we give a promised example of a minimal $2$-chain having an RN-pattern but 
not equivalent to one having a Lascar pattern. Denote $[g,h]=ghg^{-1}h^{-1}$. Consider $\CN_n^*$ for some $n\ge 7$ and let $q=\tp(ab)$ for $a\models p$ and $b\models p_n$, which is a strong type.
\begin{theorem}\label{RN_but_not_Lascar}
Suppose $G$ is not abelian. Then there is a minimal $2$-chain in $q$ which has an RN-pattern but not equivalent to 
 any Lascar pattern 2-chain.
\end{theorem}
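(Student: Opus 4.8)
The plan is to build the desired $2$-chain as a ``product'' of data in the two factors of $\CN_n^{*}=(\CM_{1,n}^{*},\CM_2^{*})$: the circle factor $\CM_{1,n}^{*}$ forces the minimal length to be $7$, while the $G$-torsor factor $\CM_2^{*}$, using that $G$ is non-abelian, obstructs equivalence to a Lascar-pattern chain. The key structural input is the absence of interaction between the sorts: since $\aut(\CN_n^{*})=\aut(\CM_{1,n}^{*})\times\aut(\CM_2^{*})$ and both $\acl$ and $\tp$ of a pair decompose coordinatewise, every $1$- and $2$-simplex, every $1$-shell, $2$-chain, balanced edge-walk, and equivalence of $2$-chains relative to $q=\tp(ab)$ (with $a\models p$ a torsor element, $b\models p_n$ a circle element) projects coordinatewise onto the corresponding datum relative to $p_n$ and to $p=\stp(\acl^{\eq}(x))$; conversely two balanced edge-walks of equal length with the same index bijection glue to one relative to $q$. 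In particular the Lascar-pattern property of a $2$-chain descends to its torsor projection.

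First I would fix, via Fact~\ref{lengthbound} and $n\ge 7$, a $1$-shell $s_0$ in $p_n$ with $B(s_0)=7$; as $p_n$ is a Lascar type we have $H_1(p_n)=0$, so $s_0$ bounds a minimal $2$-chain of length $7$, which by \cite{KKL} we may take (reversing $s_0$ if need be) to have an RN-pattern and hence, by Fact~\ref{rnpattern}, to be equivalent to the $2$-chain induced by a balanced edge-walk $(c_i)_{0\le i\le 6}$ in $p_n$; reversing the walk if necessary, we arrange its index bijection to be the $3$-cycle $\sigma=(0\,1\,2)$. On the torsor side, working with $X^{*}=G^{*}\cdot x_0$ and the automorphisms $\bar g$: since $G$ is non-abelian, choose $g_1,g_2\in G$ with $[g_1,g_2^{-1}]\ne e$, pick any $g_0\in G$, and set
\[
(h_0,\dots,h_6):=(e,\, g_0,\, g_1,\, g_0g_2,\, g_2,\, g_0g_1^{-1}g_2,\, g_1g_2^{-1}g_1^{-1}g_2),\qquad y_i:=h_i\cdot x_0 .
\]
A direct computation of the ratios $h_{2m+1}h_{2m}^{-1}$ shows that for each $i\in\{0,1,2\}$ some $\bar g\in\aut(\CM_2^{*})$ sends $y_{2i}y_{2i+1}$ to $y_{2\sigma(i)+2}y_{2\sigma(i)+1}$, so $(y_i)_{0\le i\le 6}$ is a balanced edge-walk in $p$ with index bijection $\sigma$ from $y_0=x_0$ to $y_6=[g_1,g_2^{-1}]\cdot x_0$; and since $h_6^{-1}h_0=[g_1,g_2^{-1}]^{-1}$ is a standard non-identity element of $G$, hence not in $\mu$, Theorem~\ref{classification_lascar_type} gives $y_0\not\equiv^{L}y_6$.

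Then I would set $d_i:=(c_i,y_i)$, so $(d_i)_{0\le i\le 6}$ is a balanced edge-walk relative to $q$ with index bijection $\sigma$; by Remark/Definition~\ref{rem/def:balanced chain walk} it induces a $2$-chain $\alpha$ of length $7$ with $1$-shell boundary $s$ whose circle projection is equivalent to the chain chosen above, with boundary $s_0$. Hence any $2$-chain filling $s$ projects to one filling $s_0$, so $7=B(s_0)\le B(s)\le|\alpha|=7$ and $\alpha$ is minimal; being minimal and induced by a balanced edge-walk, $\alpha$ has an RN-pattern by Fact~\ref{rnpattern}. Finally, if $\alpha$ were equivalent to a Lascar-pattern $2$-chain, its torsor projection --- a $2$-chain whose boundary $s_1$ is the torsor projection of $s$, with endpoint pair $(y_0,y_6)$ --- would be equivalent to a Lascar-pattern $2$-chain, so $s_1$ bounds a Lascar-pattern $2$-chain; Theorem~\ref{classification_lascar_type} would then force $y_0\equiv^{L}y_6$, contradicting the previous step. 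Thus $\alpha$ is a minimal RN-pattern $2$-chain relative to $q$ that is not equivalent to any Lascar-pattern $2$-chain.

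The step I expect to be the main obstacle is the bookkeeping just after Fact~\ref{lengthbound}: making sure that the $B$-minimal circle $1$-shell $s_0$ bounds an RN-pattern $2$-chain whose underlying balanced edge-walk has a $3$-cycle index bijection, this being precisely the case in which the torsor equations above are solvable with non-Lascar-equivalent endpoints using only the translations $\bar g$. If the explicit shell behind Fact~\ref{lengthbound} instead forces a transposition, one must solve the analogous system in $G^{*}$ for that bijection, now also using automorphisms $\bar\phi$ with $\phi\in\aut(\CR^{*})$ and allowing the $h_i$ to be infinitesimal perturbations of standard elements, and check that solvability with $y_0\not\equiv^{L}y_6$ still reduces to the failure of $G$ to be abelian. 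The remaining ingredients --- the coordinatewise product description of simplices, shells, patterns and equivalences, and the explicit orbit computations --- are routine.
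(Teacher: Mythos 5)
Your overall strategy is the same as the paper's: work in the product $\CN_n^*$, let the circle factor (via Fact~\ref{lengthbound}) force minimality at length $7$, and let the torsor factor obstruct the Lascar pattern through Theorem~\ref{classification_lascar_type}, by exhibiting an explicit balanced walk whose endpoint displacement is a non-trivial commutator of standard elements of $G$ and hence lies outside $\mu$. Your torsor computation is correct: with your $h_i$ one checks $h_{2i}h_{2i+1}^{-1}=h_{2\sigma(i)+2}h_{2\sigma(i)+1}^{-1}$ for $\sigma=(0\,1\,2)$ and $h_6=[g_1,g_2^{-1}]\neq e$, and your minimality and projection arguments at the end agree with the paper's.

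The genuine gap is exactly the step you flag, and it is worse than you suggest because your fallback cannot work. Writing $y_i=h_i\cdot x_0$ and passing to classes in $G^*/\mu$ (using Fact~\ref{facts_auto_(R,X)}, as in the proof of Lemma~\ref{infinitesimal_1step_lascar}), a type equality $y_ay_b\equiv y_cy_e$ forces $[h_b][h_a]^{-1}=[h_e][h_c]^{-1}$. For a balanced edge-walk of length $6$ with index bijection $\sigma$, these three equations force $[h_0]=[h_6]$ whenever $\sigma$ is the identity or any transposition: e.g.\ for $\sigma=(0\,1)$ one gets $[h_1]=[h_3]$, $[h_4]=[h_6]$ and $[h_1][h_0]^{-1}=[h_3][h_4]^{-1}$, hence $[h_0]=[h_4]=[h_6]$, i.e.\ $y_0\equiv^L y_6$ by Theorem~\ref{classification_lascar_type}. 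Only the two $3$-cycles leave room for the endpoint displacement to be a non-trivial product of commutators; so if the circle walk's bijection happens to be a transposition, no torsor walk (with or without the $\bar\phi$'s and infinitesimal perturbations) can be glued to it with non-Lascar-equivalent endpoints, and the construction collapses. To close the argument you must therefore actually produce, for a $B=7$ shell in $p_n$, a minimal length-$7$ chain whose underlying balanced edge-walk has a $3$-cycle pairing --- either by extracting the explicit walk from the construction behind Fact~\ref{lengthbound} in \cite{KKL}, or by matching the circle chain to the concrete combinatorial shape of the torsor chain, which is what the paper's ``$2$-chain induced from $\alpha$ and $\alpha'$'' step is implicitly doing. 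Neither Fact~\ref{lengthbound} nor Fact~\ref{rnpattern} by itself gives you any control over that bijection, so as written the proof does not close.
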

\begin{proof}
Suppose $G$ is not abelian. We can choose $g_1, g_2, g_3\in G$ such that $[(g_3g_2)^{-1},(g_2g_1))]\neq \id$. Let $h_1 = g_1$, $h_2=g_2$, $h_3=g_3$, $h_4=h^{1-}[h_1,(h_3h_2)]$, $h_5=h^{-2}[h_2,(h_4h_3)]$, $h_6=h^{-3}[h_3,(h_5h_4)]$, and $f_i=h_i h_{i-1}\ldots h_1$ for $1\le i\le 6$. Set $x_i=(h_i h_{i-1}\cdots h_1)^{-1}\cdot x_0$ for $1\le i\le 6$. Consider a $1$-shell $s=f_{01}+f_{12}-f_{02}$, where $f_{01}(\{01\})\equiv [x_0 x_0]$, $s=f_{12}(\{1,2\})=[x_0 x_0]$, and $f_{02}(\{0,2\})\equiv [x_6 x_0]$, and its endpoints are $x_0$ and $x_6$. Set $h_1:=g_1$, $h_2:=g_2$, $h_3:=g_3$, $h_4:=h_1^{-1}[h_1,(h_3h_2)]$, $h_5:=h_2^{-1}[h_2,(h_4h_3)]$, and $h_6:=h_3^{-1}[h_3,(h_5h_4)]$. Let $f_i=h_ih_{i-1}\cdots h_1\in G\subseteq G^*$ for $1\le i\le 6$. Then we have that 
\begin{itemize}
	\item $f_4=f_3f_1^{-1}$;
	\item $f_5f_1^{-1}=f_4f_2^{-1}$; and
	\item $f_6f_2^{-1}=f_5f_3^{-1}$.
\end{itemize}
\noindent This implies $\bar f_4(x_0x_1)=x_4x_3$, $\ov{f_5f_1^{-1}}(x_1x_2)=x_5x_4$, and $\ov {f_6f_2^{-1}}(x_2x_3)=x_6x_5$. Then we have that $x_0x_1\equiv x_4x_3$, $x_1x_2\equiv x_6x_5$, and $x_2x_3\equiv x_5x_4$.
$$\begin{tikzcd}
x_0 \ar[r, "\bar{h}_1"]&x_1 \ar[r, "\bar{h}_2"]& x_2 \ar[r, "\bar{h}_3"]& x_3 \ar[r, "\bar{h}_4"]& x_4 \ar[r, "\bar{h}_5"]& x_5 \ar[r, "\bar{h}_6"]& x_6
\end{tikzcd}$$

\noindent This gives a $2$-chain $\alpha$ of length $7$ having the  $1$-shell boundary $s$. Note that
\begin{align*}
f_6&= (h_6h_5h_4h_3)h_2h_1\\
&=(h_5h_4)h_2h_1\\
&=(h_4h_3h_2^{-1}h_3^{-1})h_2h_1\\
&=(h_4h_3)(h_2^{-1}h_3^{-1})(h_2h_1)\\
&=(h_3h_2)(h_2h_1)^{-1}(h_2^{-1}h_3^{-1})(h_2h_1)\\
&=[(h_3h_2),(h_2h_1)^{-1}]\neq \id,
\end{align*}
and $f_6\notin \mu$. By Theorem \ref{classification_lascar_type}, $s$ is not the boundary of $2$-chain with a Lascar pattern and $B(s)\le 7$.\\

Let $b,b'\models p_n$ where $(b,b')$ is an endpoint pair of a $1$-shell $s'$ of $p_n$ such that $B(s')=7$ in $\CM_{1,n}$. Let $\alpha'$ be a minimal $2$-chain with an RN-pattern having the $1$-shell boundary $s'$ of length $7$. Note that we can take such a $2$-chain because $p_n$ is a Lascar type.
Consider $a=(x_0,b),a'=(x_6,b')\models q$. Let $s''$ be a $1$-shell induced from $s$ and $s'$ with an endpoint pair $(a,a')$. Consider a $2$-chain $\alpha''$ in $q$ induced from $\alpha$ and $\alpha'$, which
does not  a  Lascar pattern and has length $7$. We claim that $\alpha''$ is minimal. Since $B(s)\le 7$ and $B(s')=7$, we have that $B(s'')=7$ and so $\alpha''$ is minimal. By the construction of $\alpha''$, it has an RN-pattern. Therefore, the minimal $2$-chain $\alpha''$ has an RN-pattern but is not equivalent to a  Lascar pattern $2$-chain.
\end{proof}


\end{document}